\newcommand{\setpath}{}
\renewcommand{\qed}{\nobreak \ifvmode \relax \else
      \ifdim\lastskip<1.5em \hskip-\lastskip
      \hskip1.5em plus0em minus0.5em \fi \nobreak
      \vrule height0.75em width0.5em depth0.25em\fi}
\newtheoremstyle{plainbigtitle}  
  {0.5\topsep}   
  {0.5\topsep}   
  {\itshape}  
  {0pt}       
  {\bfseries} 
  {.}         
  {5pt plus 1pt minus 1pt}  
  {\bfseries\boldmath#1 #2\thmnote{ (#3)}}          
\newtheoremstyle{definitionbigtitle}  
  {0.5\topsep}   
  {0.5\topsep}   
  {\upshape}  
  {0pt}       
  {\bfseries} 
  {.}         
  {5pt plus 1pt minus 1pt}  
  {\bfseries\boldmath#1 #2\thmnote{ (#3)}}          
\newcommand{\newtheoremwithoutindent}[2]{\newtheorem{#1}{#2}\AfterEndEnvironment{#1}{\noindent\ignorespaces}}
\theoremstyle{plainbigtitle}
\theoremstyle{definitionbigtitle}
\begin{document}
%
%
%
%

\newcommand{\function}{g} 
\newcommand{\Si}[1]{s_{#1}} 
\input{\setpath%
tubmacros}



\newcommand{\newPa}[2]{#1}
\newcommand{\usuk}[2]{{#1}}
\renewcommand{\minimisefunction}{\text{\textup{minimi\usuk{z}{s}e}}}
\renewcommand{\maximisefunction}{\text{\textup{maximi\usuk{z}{s}e}}}
\newcommand{\showplots}[2]{#1}

\newcommand{\explanation}[1]{}

\newcommand{\extendednotation}[2]{#1}

\newcommand{\dualitygapresidual}[2]{#1}

\newcommand{\removemapW}[2]{#1}

\newcommand{\simplifySB}[2]{#1}

\newcommand{\locusfunction}{Z}
\newcommand{\locus}[1]{\locusfunction({#1})}

\newcommand{\functionalsetname}{F}
\newcommand{\makeparallel}[1]{\bar{#1}}
\newcommand{\makeall}[1]{{#1}}

\newcommand{\CCDfunction}{\makeparallel{\functionalsetname}}
\newcommand{\CCDallfunction}{\makeall{\functionalsetname}}
\newcommand{\LSCfunction}{\textnormal{Lsc}}
\renewcommand{\lipschitz}{l}
\newcommand{\CCLfunction}{\CCDfunction^L}
\renewcommand{\CCLfunction}{\CCDfunction^\lipschitz}
\newcommand{\CCLallfunction}{\bar\CCDfunction^L}
\renewcommand{\CCLallfunction}{\CCDallfunction^\lipschitz}

\newcommand{\LIPfunction}{\textnormal{Lip}}
\newcommand{\CONfunction}{\textnormal{Con}}
\renewcommand{\LIP}[1]{\LIPfunction({#1})}
\renewcommand{\CON}[1]{\CONfunction({#1})}
\newcommand{\CCD}[1]{\CCDfunction({#1})}
\newcommand{\CCDall}[1]{\CCDallfunction({#1})}
\renewcommand{\CCL}[1]{\CCLfunction({#1})}
\newcommand{\CCLall}[1]{\CCLallfunction({#1})}
\newcommand{\LSC}[1]{\LSCfunction({#1})}

\newcommand{\notxconv}{\notx^{\star}}

\newcommand{\thefunction}{g}
\renewcommand{\functionk}[1]{\thefunction^{#1}}
\renewcommand{\distrifunction}{P}

\newcommand{\smoothfunction}{\objectivefunction}
\newcommand{\Pfunction}{h}
\newcommand{\Qfunction}{q}
\newcommand{\Amatrix}{A}
\newcommand{\bvec}{b}
\newcommand{\descentalpha}{a}
\newcommand{\descentdirection}{d}

\newcommand{\smooth}[1]{\smoothfunction({#1})}
\newcommand{\smoothtwo}[1]{\smoothtwofunction({#1})}
\newcommand{\smoothtwofunction}{g}
\newcommand{\smoothkfunction}[1]{\smoothfunction^{#1}}
\newcommand{\smoothkx}[2]{\smoothkfunction{#1}({#2})}
\newcommand{\Px}[1]{\Pfunction({#1})}
\newcommand{\Pifunction}[1]{\Pfunction_{#1}}
\newcommand{\Pix}[2]{\Pifunction{#1}({#2})}
\newcommand{\Qx}[1]{\Qfunction({#1})}
\newcommand{\descentdirectioni}[1]{\descentdirection_{#1}}

\newcommand{\diracfunction}{1}
\newcommand{\dirac}[1]{\indicatorfunction_{#1}}
\newcommand{\diracx}[2]{\indicator{#1}({#2})}

\newcommand{\Stochi}[1]{\Stoch_{#1}}

\newcommand{\acceptednorm}[2]{\norm{#2}_{#1}}
\newcommand{\uniformnorm}[2]{\norm{#2}_{1,{#1}}}
\newcommand{\uniformsecondderivativenorm}[2]{\norm{#2}_{2,{#1}}}
\newcommand{\family}[2]{\{{#1}\}_{#2}}

\newcommand{\intXxFf}[4]{\int\nolimits_{#1} {#4} \,  {#3}(d {#2})}
\newcommand{\IntXxFf}[4]{\int_{#1} {#4} \,  {#3}(d {#2})}
\newcommand{\cardinal}[1]{\singlenorm{#1}}
\newcommand{\notS}{S}

\newcommand{\xxgen}{\xx}
\newcommand{\ijk}{q}

\newcommand{\mmuki}[2]{\mmu^{#1}_{#2}}
\newcommand{\llambbis}{\hat{\llamb}}
\newcommand{\llambbisk}[1]{\llambbis^{#1}}
\newcommand{\llambbisi}[1]{\llambbis_{#1}}
\newcommand{\llambbiski}[2]{\llambbisk{#1}_{#2}}
\newcommand{\mmubis}{\hat{\mmu}}
\newcommand{\mmubisk}[1]{\mmubis^{#1}}
\newcommand{\mmubisi}[1]{\mmubis_{#1}}
\newcommand{\mmubiski}[2]{\mmubisk{#1}_{#2}}

\newcommand{\stufffunction}{\theta}
\newcommand{\stuff}[1]{\stufffunction({#1})}
\newcommand{\maxradius}{\rho}
\newcommand{\neibradius}[1]{\setoflambdas^{\star}({#1})}

\newcommand{\drawi}{j}
\newcommand{\drawilast}{t}
\newcommand{\drawikl}[2]{\drawi^{#1}_{#2}}
\newcommand{\probadistrifunction}{\pi}
\newcommand{\probadistri}[1]{\probadistrifunction_{#1}}
\newcommand{\Probadistrifunction}{\Pi}
\newcommand{\Probadistri}[1]{\Probadistrifunction_{#1}}
\newcommand{\probadiag}{\Pi}
\newcommand{\genericfunctionk}{\hat \thefunction}
\newcommand{\genericfunctionkx}[1]{\genericfunctionk({#1})}
\renewcommand{\genericnormscale}{M}
\newcommand{\genericnormscaletwo}{Q}
\newcommand{\genericnormscaleij}[2]{\genericnormscale_{{#1}{#2}}}
\newcommand{\genericdiag}{D}
\newcommand{\genericdiagi}[1]{\genericdiag_{{#1}}}

\newcommand{\SWifunction}{I}
\newcommand{\SWi}[2]{\SWifunction({#1},{#2})}
\newcommand{\SWIfunction}{I}
\newcommand{\SWI}[2]{\SWIfunction({#1},{#2})}
\newcommand{\exSWifunction}{J}
\newcommand{\exSWi}[2]{\exSWifunction({#1},{#2})}
\newcommand{\exSWIfunction}{I}
\newcommand{\exSWI}[2]{\exSWIfunction({#1},{#2})}
\newcommand{\standbyfunction}{K}
\newcommand{\standby}[4]{\standbyfunction({#1},{#2},{#3},{#4})}
\newcommand{\standbyfx}[2]{\standbyfunction({#1},{#2})}
\newcommand{\standbyfxvw}[4]{\standbyfunction\plusvectfxvw{#1}{#2}{#3}{#4}}
\newcommand{\standbyifunction}[1]{\bar\standbyfunction_{#1}}
\newcommand{\standbyi}[5]{\standbyifunction{#1}({#2},{#3},{#4},{#5})}
\newcommand{\standbyifx}[3]{\standbyifunction{#1}({#2},{#3})}
\newcommand{\standbyifxvw}[5]{\standbyifunction{#1}\plusvectfxvw{#2}{#3}{#4}{#5}}
\newcommand{\plusstandbyfunction}{\makeplus{\SWifunction}}
\newcommand{\plusstandby}[4]{\plusstandbyfunction({#1},{#2},{#3},{#4})}
\newcommand{\plusstandbyfx}[2]{\plusstandbyfunction({#1},{#2})}
\newcommand{\plusstandbyfxvw}[4]{\plusstandbyfunction\plusvectfxvw{#1}{#2}{#3}{#4}}

\newcommand{\plusnotf}{\makeplus{\notf}}
\newcommand{\plusllamb}{\makeplus{\llamb}}

\newcommand{\descentset}{I}
\newcommand{\descentsetfx}[2]{\descentset({#1},{#2})}

\newcommand{\probabilityspace}[3]{({#1},{#2},{#3})}

\newcommand{\search}{\bar\notx}
\newcommand{\searchfxT}[3]{\search_{{#1},{#2},{#3}}}
\newcommand{\searchfxTa}[4]{\search_{{#1},{#2},{#3}}({#4})}
\newcommand{\searchfxa}[3]{\search({#3})}
\renewcommand{\mapAfx}[2]{\mapAfunction({#1},{#2})}

\newcommand{\mapJfx}[2]{\mapJfunction({#1},{#2})}

\obsolete{
\DontPrintSemicolon
\SetKwInput{Data}{Initial data}
\SetKwInput{Result}{Result}
\SetKw{Ret}{Return}
\newcommand{\Give}[1]{\Ret\  {#1}}
\SetKw{Wait}{Standby}
\SetKw{Observe}{Observe}
\SetKw{Draw}{Draw }
\SetKwFor{While}{until}{do}{endw}
\SetKwRepeat{newRepeat}{Repeat}{until}
\SetKwProg{cyclicFor}{For}{ do successively}{end}
\newcommand{\inlineIfElse}[3]{\lIf{#1}{{#2}\ \lElse{#3}}}
\SetKwIF{Def}{ElseIf}{Else}{Consider}{as in}{else if}{else}{endif}
\SetKwProg{Funct}{Function}{:}{}
\newcommand{\Function}[2]{\Funct{#1}{#2}}
\newcommand{\Define}[2]{\lDef{#1}{#2}}
\newcommand{\routinefont}[1]{\FuncSty{#1}}
\newcommand{\parfont}[1]{\FuncSty{#1}}
\newcommand{\stoppingcriterionfunction}{\routinefont{stopping\_criterion}}
\newcommand{\stoppingcriterion}[3]{\stoppingcriterionfunction\parfont{(}{#1},{#2},{#3}\parfont{)}}
\newcommand{\dualstoppingcriterionfunction}{\routinefont{dual\_stopping\_criterion}}
\newcommand{\dualstoppingcriterion}[4]{\dualstoppingcriterionfunction\parfont{(}{#1},{#2},{#3},{#4}\parfont{)}}
\newcommand{\effectivemappingfunction}{\routinefont{conditional\_mapping}}
\newcommand{\effectivemapping}[4]{\effectivemappingfunction\parfont{(}{#1},{#2},{#3},{#4}\parfont{)}}
\newcommand{\standbycriterionfunction}{\routinefont{standby\_criterion}}
\newcommand{\standbycriterion}[3]{\standbycriterionfunction\parfont{(}{#1},{#2},{#3}\parfont{)}}
\newcommand{\dualstandbycriterionfunction}{\routinefont{dual\_standby\_criterion}}
\newcommand{\dualstandbycriterion}[4]{\dualstandbycriterionfunction\parfont{(}{#1},{#2},{#3},{#4}\parfont{)}}
\SetKwFor{While}{Until}{do}{endw}
\newcommand{\makedistribution}[1]{distribution({#1})}
\newcommand{\learn}[2]{model({#1},{#2})}
\newcommand{\duallearn}[3]{dual\_model({#1},{#2},{#3})}
\newcommand{\observe}[1]{\Observe{\textup{:}}\ {#1}}
}

\newcommand{\Conjunction}[2]{\bigwedge_{#1}{#2}}
\newcommand{\Conjunctionnolimits}[2]{\bigwedge\nolimits_{#1}{#2}}

\newcommand{\iset}{\imath}
\newcommand{\iseti}[1]{\iset_{#1}}
\newcommand{\isetk}[1]{\iset^{#1}}
\newcommand{\jset}{\jmath}
\newcommand{\jseti}[1]{\jset_{#1}}
\newcommand{\jsetk}[1]{\jset^{#1}}
\newcommand{\setofnodesets}{\mathcal{N}}
\newcommand{\mapi}[1]{\mapfunction_{#1}}
\newcommand{\localmapifunction}[1]{{\mapfunction}_{#1}}
\newcommand{\localmapifx}[3]{\localmapifunction{#1}({#2},{#3})}

\newcommand{\localmapGifunction}[1]{{\mapCfunction}^{\mapGfunction}_{#1}}
\newcommand{\localmapGifx}[3]{\localmapGifunction{#1}({#2},{#3})}
\newcommand{\mapGCfunction}{\mapCfunction^{\mapGfunction}}
\newcommand{\mapGC}[2]{\mapGCfunction({#1},{#2})}
\newcommand{\mapGSBfunction}{\mapSBfunction^{\mapGfunction}}
\newcommand{\mapGSB}[2]{\mapGSBfunction({#1},{#2})}
\newcommand{\mapGSBfxvw}[4]{\mapGSBfunction\plusvectfxvw{#1}{#2}{#3}{#4}}
\newcommand{\pluslocalmapGifunction}[1]{\plusmapGfunction_{#1}}
\newcommand{\pluslocalmapGifx}[3]{\pluslocalmapGifunction{#1}({#2},{#3})}
\newcommand{\pluslocalmapGifxvw}[5]{\pluslocalmapGifunction{#1}\plusvectfxvw{#2}{#3}{#4}{#5}}
\newcommand{\plusmapGfunction}{\tilde{\mapCfunction}^{\mapGfunction}}
\newcommand{\plusmapG}[4]{\plusmapGfunction({#1},{#2},{#3},{#4})}
\newcommand{\plusmapGfxvw}[4]{\plusmapGfunction\plusvectfxvw{#1}{#2}{#3}{#4}}

\newcommand{\mapifx}[3]{\mapi{#1}({#2},{#3})}

\newcommand{\solset}{S}
\newcommand{\solsetf}[1]{\solset^{#1}}
\newcommand{\solsetfX}[2]{\solsetf{#1}({#2})}

\newcommand{\local}[1]{\dot{#1}}
\newcommand{\localprimal}[1]{\ddot{#1}}
\newcommand{\subspace}{S}
\newcommand{\localsubspacei}[1]{\local{\subspace}_{#1}}
\newcommand{\localproji}[1]{{P}_{#1}}
\newcommand{\localNNi}[1]{\localprimal{\setofneighbours}_{#1}}
\newcommand{\localxx}{\localprimal{\xx}}
\newcommand{\localstochxfunction}{\localprimal\xx}
\newcommand{\localstochx}[1]{\localstochxfunction({#1})}
\newcommand{\localstochobjectivefunction}{\localprimal{\stochobjectivefunction}}
\newcommand{\localstochconstraintfunction}{\localprimal{\stochconstraintfunction}}
\newcommand{\localstochconstraintifunction}[1]{\localstochconstraintfunction_{#1}}
\newcommand{\localstochconstrainti}[3]{\localstochconstraintifunction{#1}({#2},{#3})}
\newcommand{\localstochconstraintineqfunction}{\localprimal{\stochconstraintineqfunction}}
\newcommand{\localstochconstraintineqifunction}[1]{\localstochconstraintineqfunction_{#1}}
\newcommand{\localstochconstraintineqi}[3]{\localstochconstraintineqifunction{#1}({#2},{#3})}
\newcommand{\localstochconstrainteqfunction}{\localprimal{\stochconstrainteqfunction}}
\newcommand{\localstochconstrainteqifunction}[1]{\localstochconstrainteqfunction_{#1}}
\newcommand{\localstochconstrainteqi}[3]{\localstochconstrainteqifunction{#1}({#2},{#3})}
\newcommand{\localstochobjectiveillambfunction}[2]{\localstochobjectivefunction_{#1,#2}}
\newcommand{\localstochobjectiveillamb}[4]{\localstochobjectiveillambfunction{#1}{#2}({#3},{#4})}
\newcommand{\localthefunction}{\local{\thefunction}}
\newcommand{\localfunctionk}[1]{\localthefunction^{#1}}
\newcommand{\localfunctionillamb}[2]{\localthefunction_{#1,#2}}
\newcommand{\localfunctionkillamb}[3]{\localfunctionk{#1}_{#2,#3}}
\newcommand{\localfunctionkillambmmu}[4]{\localfunctionkillamb{#1}{#2}{#3}({#4})}
\newcommand{\noth}{h}
\newcommand{\nothx}[1]{\noth({#1})}
\newcommand{\localnoth}{\local{\noth}}
\newcommand{\localnothix}[2]{\localnoth_{#1,#2}}
\newcommand{\localnothixy}[3]{\localnothix{#1}{#2}({#3})}

\newcommand{\maplocal}[1]{#1}
\newcommand{\maplocalix}[3]{\maplocal{#1}_{{#2}:{#3}}}
\newcommand{\maplocalixy}[4]{\maplocalix{#1}{#2}{#3}({#4})}
\newcommand{\localWdescentfunction}[1]{\local{\Delta}^{#1}}
\newcommand{\localWdescentfunctionix}[3]{\localWdescentfunction{#1}_{{#2},{#3}}}

\newcommand{\qk}[1]{q({#1})}
\newcommand{\stochkq}[2]{\stochk{{#1},{#2}}}

\renewcommand{\stochgfunction}{\hat \thefunction}

\newcommand{\genericseti}[1]{\genericset_{#1}}

\newcommand{\notq}{\tilde{\notx}}
\newcommand{\notqi}[1]{\notq_{#1}}

\newcommand{\deltasymbol}{s}
\newcommand{\covarsymbol}{\Sigma}
\newcommand{\fullcovarsymbol}{\Gamma}
\newcommand{\deltafunction}{\deltasymbol}
\newcommand{\deltaf}[1]{\deltafunction^{#1}}
\newcommand{\deltaifunction}[1]{\deltafunction_{#1}}
\newcommand{\deltafx}[2]{\deltaf{#1}({#2})}
\newcommand{\deltafi}[2]{\deltaf{#1}_{#2}}
\newcommand{\deltafix}[3]{\deltafi{#1}{#2}({#3})}
\newcommand{\stddeltaf}[1]{\sigma_{\deltaf{#1}}}
\newcommand{\vardeltaf}[1]{\sigma^2_{\deltaf{#1}}}
\newcommand{\covardeltaf}[1]{\covarsymbol_{\deltaf{#1}}}
\newcommand{\stddeltafx}[2]{\stddeltaf{#1}({#2})}
\newcommand{\vardeltafx}[2]{\vardeltaf{#1}({#2})}
\newcommand{\covardeltafx}[2]{\covardeltaf{#1}({#2})}
\newcommand{\estimdeltafk}[2]{\hat\deltasymbol^{#2}}
\newcommand{\estimdeltafkx}[3]{\estimdeltafk{#1}{#2}({#3})}
\newcommand{\estimdeltafki}[3]{\estimdeltafk{#1}{#2}_{#3}}
\newcommand{\estimdeltafkix}[4]{\estimdeltafki{#1}{#2}{#3}({#4})}
\newcommand{\estimstddeltafk}[2]{\hat\sigma^{#2}}
\newcommand{\estimvardeltafk}[2]{(\hat\sigma^{#2})^2}
\newcommand{\estimcovardeltafk}[2]{\hat\covarsymbol^{#2}}
\newcommand{\estimstddeltafkx}[3]{\estimstddeltafk{#1}{#2}({#3})}
\newcommand{\estimvardeltafkx}[3]{(\estimstddeltafk{#1}{#2}({#3}))^2}
\newcommand{\estimcovardeltafkx}[3]{\estimcovardeltafk{#1}{#2}({#3})}
\newcommand{\estimstddeltafki}[3]{\estimstddeltafk{#1}{#2}_{#3}}
\newcommand{\estimvardeltafki}[3]{(\estimstddeltafk{#1}{#2}_{#3})^2}
\newcommand{\estimcovardeltafki}[3]{\estimcovardeltafk{#1}{#2}_{#3}}
\newcommand{\estimstddeltafkix}[4]{\estimstddeltafki{#1}{#2}{#3}({#4})}
\newcommand{\estimvardeltafkix}[4]{(\estimstddeltafki{#1}{#2}{#3}({#4}))^2}
\newcommand{\estimcovardeltafkix}[4]{\estimcovardeltafki{#1}{#2}{#3}({#4})}
\newcommand{\truecovarfunction}{\covarsymbol}
\newcommand{\truecovarifunction}[1]{\truecovarfunction_{#1}}
\newcommand{\truecovar}[2]{\truecovarfunction({#2})}
\newcommand{\truecovari}[3]{\truecovarifunction{#1}({#3})}
\newcommand{\estimcovarfunction}{\hat\truecovarfunction}
\newcommand{\estimcovarifunction}[1]{\estimcovarfunction_{#1}}
\newcommand{\estimcovar}[2]{\estimcovarfunction({#2})}
\newcommand{\estimcovari}[3]{\estimcovarifunction{#1}({#3})}
\newcommand{\truefullcovarfunction}{\fullcovarsymbol}
\newcommand{\truefullcovarifunction}[1]{\truefullcovarfunction_{#1}}
\newcommand{\truefullcovari}[3]{\truefullcovarifunction{#1}({#3})}
\newcommand{\estimfullcovardeltafk}[2]{\hat\fullcovarsymbol^{#2}}
\newcommand{\estimfullcovardeltafkx}[3]{\estimfullcovardeltafk{#1}{#2}({#3})}
\newcommand{\estimfullcovardeltafki}[3]{\estimfullcovardeltafk{#1}{#2}_{#3}}
\newcommand{\estimfullcovardeltafkix}[4]{\estimfullcovardeltafki{#1}{#2}{#3}({#4})}

\newcommand{\rank}[1]{\textup{rk}({#1})}
\newcommand{\Rank}[1]{\textup{rk}\left({#1}\right)}
\newcommand{\pinv}{\dagger}

\newcommand{\decreasingfunction}{d}
\newcommand{\decreasing}[1]{\decreasingfunction({#1})}

 \newcommand{\individualfunction}{h}
\newcommand{\individualklfunction}[2]{\individualfunction^{{#1},{#2}}}
\newcommand{\individualklifunction}[3]{\individualklfunction{#1}{#2}_{#3}}
\newcommand{\individualkli}[4]{\individualklifunction{#1}{#2}{#3}({#4})}
\newcommand{\dualitygap}{\delta}
\newcommand{\dualitygapstd}{\sigma_{\dualitygap}}
\newcommand{\estimdualitygap}{\hat\delta}
\newcommand{\estimdualitygapstd}{\hat\sigma_{\dualitygap}}
\newcommand{\Variance}[1]{\textup{Var\left({#1}\right)}}
\newcommand{\variance}[1]{\textup{Var}({#1})}

\newcommand{\xsolllambfunction}[1]{\xx^{\star}}
\newcommand{\xsolllamb}[2]{\xsolllambfunction{#1}({#2})}

\newcommand{\indexneighbour}[2]{{#1}_{#2}}
\newcommand{\setofneighboursplus}{\dot\setofneighbours}
\renewcommand{\Ni}[1]{\indexneighbour{\setofneighbours}{#1}}
\newcommand{\Nplusi}[1]{\indexneighbour{\setofneighboursplus}{#1}}

\newcommand{\notproba}{\pi}
\newcommand{\notfreedom}{d}
\newcommand{\decisionthreshold}{\beta}
\newcommand{\decisionthresholdr}[1]{\decisionthreshold_{#1}}
\newcommand{\decisionthresholdrp}[2]{\decisionthresholdr{#1}({#2})}
\newcommand{\true}{\emph{\text{true}}}
\newcommand{\false}{\emph{\text{false}}}
\newcommand{\functionto}{\mapsto}


\newcommand{\shortlong}[2]{#1}
\renewcommand{\doteq}{:=}

\newcommand{\llambsol}{\bar\llamb}

\newtheorem{condition}{Condition}
\newcommand{\Statement}{Result{}}
\newtheorem{fact}{\Statement}

{\theoremstyle{plain}
\newtheorem{algo}{Algorithm}}

\newcommand{\reducedball}{B}
\newcommand{\reducedzz}{\tilde \newzz}
\newcommand{\reducedyy}{\tilde \newyy}

 \newcommand{\emproblem}{\em}
\newcommand{\emdefinition}{\em}

\newcommand{\removequeue}[2]{#2}
\newcommand{\removerobust}[2]{#1}
\newcommand{\removeothermethods}[2]{#2}

\newcommand{\pointtoset}[2]{#1}
\newcommand{\thinpm}{  \!  \pm  \! }
\newcommand{\thintimes}{  \!  \times  \! }
\renewcommand{\mpmean}[1]{{\messagepassingfunction}({#1})}


\newcommand{\gls}[1]{\emph{#1}}
\newcommand{\Gls}[1]{\emph{#1}}
\newcommand{\glspl}[1]{\emph{#1}s}
\newcommand{\glslink}[2]{{#2}}
\newcommand{\Glspl}[1]{\emph{#1}s}

\renewcommand{\compact}{C}

\newcommand{\changeZZ}[2]{#1}
\newcommand{\nondifferentiable}[2]{#1}
\newcommand{\differentiable}[2]{#2}

\renewcommand{\llambconv}{\bar \llamb}
\newcommand{\stochxconvfunction}{\bar \xx}
\newcommand{\stochxconvifunction}[1]{\stochxconvfunction_{#1}}
\newcommand{\stochxconv}[1]{\stochxconvfunction({#1})}
\newcommand{\stochxconvi}[2]{\stochxconvifunction{#1}({#2})}

\renewcommand{\buff}{\hat{\mmu}}
\renewcommand{\buffj}[1]{\buff^{{#1}}}
\renewcommand{\buffji}[2]{\buff_{#2}^{{#1}}}

\renewcommand{\nn}{n}
\renewcommand{\NN}{p}
\renewcommand{\MM}{m}
\renewcommand{\setofnodes}{N}
\renewcommand{\setofneighbours}{\setofnodes}

\renewcommand{\alphamapping}{\hat a}
\renewcommand{\mapAfunction}{\alphamapping}
\renewcommand{\projectedllamb}{\hat \llamb}
\renewcommand{\violatingsetsymbol}{\mathcal{A}}
\renewcommand{\nonviolatingsymbol}{E}
\renewcommand{\violatingsymbol}{\bar \nonviolatingsymbol}
\renewcommand{\Acorthogbase}{F}
\renewcommand{\LCostrowD}[1]{\hat D}
\renewcommand{\LCostrowE}[1]{\hat E}

\renewcommand{\LClambconv}{0}

\newcommand{\setofScalesn}[1]{\mathcal{\Scale}({#1})}
\renewcommand{\setofScales}{\setofScalesn{\MM}}
\renewcommand{\setofScalesi}[1]{\setofScalesn{\MMi{#1}}}

\renewcommand{\freelagrangianarg}[3]{({#1},{#3})}
\renewcommand{\freexsol}[2]{\freexsolfunction({#2})}
\renewcommand{\freexsoli}[3]{\freexsolfunction({#3}) }

\renewcommand{\Rconst}{\incidence}

\renewcommand{\gradij}[2]{(\grad_{\! {#1}})_{#2}}

\renewcommand{\mmudy}[2]{\lamb_i ({#1},{#2})}



\newcommand*{\QEDA}{\hfill\ensuremath{\square}}
\newcommand{\titleenv}[1]{\emph{[{#1}]}}
\newcommand{\defemph}{\em{}}
\newcommand{\probemph}{\em{}}

\newcommand{\jotanew}[2]{#1}


\renewcommand{\doPROJ}[1]{\tilde{#1}}
\renewcommand{\nonPROJnu}{\xx}
\renewcommand{\subderiv}{s}
\renewcommand{\dgnom}{\subderiv}
\renewcommand{\dnom}{\nu}
\renewcommand{\snom}{\sigma}
\renewcommand{\dgv}{\bar \dgnom}
\renewcommand{\dgnv}{\dgnom}
\renewcommand{\dv}{\bar \dnom}
\renewcommand{\dnv}{\dnom}
\renewcommand{\sv}{\bar \snom}
\renewcommand{\snv}{\snom}

\newcommand{\newNN}{\NN}
\newcommand{\newxx}{\xx}
\newcommand{\newyy}{\yy}
\newcommand{\newzz}{\zz}
\newcommand{\newnotf}{\notf}
\newcommand{\newnotfx}[1]{\newnotf({#1})}
\newcommand{\newgenericset}{\genericset}
\newcommand{\newxxdest}{\newzz}
\newcommand{\newzzk}[1]{\hat\newzz[{#1}]}
\newcommand{\newScale}{D}
\newcommand{\newScalefunction}{\Scale}
\newcommand{\newScalefx}[2]{\newScalefunction({#1},{#2})}
\renewcommand{\reducedspace}{Z}

\renewcommand{\Scaleki}[2]{\Scalek{#1}_{#2}}


\renewcommand{\zz}{z}

\renewcommand{\constraintineqfunction}{d}
\renewcommand{\constrainteqfunction}{h}
\newcommand{\constraintineqlocalfunction}{\zeta}
\renewcommand{\constraintineqlocalijfunction}[2]{\constraintineqlocalfunction_{{#1}{#2}}}

\renewcommand{\convexcave}[2]{#1}
\renewcommand{\Qset}{\sideconstraintset}


\newcommand{\Wrho}{\Zrho}
\newcommand{\Wd}{d}
\newcommand{\Weps}{\epsilon}
\newcommand{\Wk}{\Zk}
\newcommand{\Wtheta}{\theta}
\newcommand{\Wthetax}[1]{\Wtheta({#1})}
\newcommand{\Wdescentfunctional}{\Delta}
\newcommand{\Wdescentfunction}[1]{\Wdescentfunctional^{#1}}
\newcommand{\Wdescent}[2]{\Wdescentfunction{#1}({#2})}
\newcommand{\Wn}{\bar{n}}
\newcommand{\Wni}[1]{\Wn({#1})}

\newcommand{\mapifunction}[1]{\mapfunction_{#1}}

\newcommand{\makeplus}[1]{\tilde{#1}}

\newcommand{\plusfunction}{\makeplus{\thefunction}}
\newcommand{\plusfunctionxw}[2]{\plusfunction({#1},{#2})}
\newcommand{\plusWdescentfunction}[1]{\makeplus{\Wdescentfunctional}^{#1}}
\newcommand{\plusWdescent}[3]{\plusWdescentfunction{#1}({#2},{#3})}
\newcommand{\plusCCDfunction}{\makeplus{\functionalsetname}}
\newcommand{\plusCCD}[1]{\plusCCDfunction({#1})}

\newcommand{\plusv}{v}
\newcommand{\plusvmax}{\bar{\plusv}}
\newcommand{\plusvk}[1]{\plusv^{#1}}
\newcommand{\plusvki}[2]{\plusvk{#1}_{#2}}
\newcommand{\plusvx}[1]{\plusv({#1})}
\newcommand{\plusvi}[1]{\plusv_{#1}}
\newcommand{\plusvkx}[2]{\plusvk{#1}({#2})}
\newcommand{\plusvix}[2]{\plusvi{#1}({#2})}
\newcommand{\plusvkix}[3]{\plusvki{#1}{#2}({#3})}
\newcommand{\plusVfunction}{V}
\newcommand{\plusV}[1]{\plusVfunction({#1})}
\newcommand{\plusVifunction}[1]{\plusVfunction_{#1}}
\newcommand{\plusVi}[2]{\plusVifunction{#1}({#2})}
\newcommand{\maxplusv}{\bar\plusv}
\newcommand{\plusvnorm}[2]{\norm{{#2}}_{#1}^{\plusVfunction}}
\newcommand{\plusw}{w}
\newcommand{\pluswk}[1]{\plusw^{#1}}
\newcommand{\pluswbis}{u}
\newcommand{\pluswbisk}[1]{\pluswbis^{#1}}
\newcommand{\plusW}{W}

\newcommand{\pluslocalmapifunction}[1]{\plusmapfunction_{#1}}
\newcommand{\pluslocalmapifx}[3]{\pluslocalmapifunction{#1}({#2},{#3})}
\newcommand{\pluslocalmapifxvw}[5]{\pluslocalmapifunction{#1}\plusvectfxvw{#2}{#3}{#4}{#5}}

\newcommand{\plusmapfunction}{\tilde{\mapfunction}}
\newcommand{\plusmap}[4]{\plusmapfunction({#1},{#2},{#3},{#4})}
\newcommand{\plusmapfxvw}[4]{\plusmapfunction\plusvectfxvw{#1}{#2}{#3}{#4}}
\newcommand{\plusvectfxvw}[4]{\vect{\vect{{#1},{#3}},\vect{{#2},{#4}}}}
\newcommand{\plusmapfx}[2]{\plusmapfunction({#1},{#2})}
\newcommand{\mapSWfunction}{\mathcal{S}}
\newcommand{\mapSW}[2]{\mapSWfunction({#1},{#2})}
\newcommand{\mapSBfunction}{\mathcal{K}}
\newcommand{\mapSB}[2]{\mapSBfunction({#1},{#2})}
\newcommand{\mapSBfx}[2]{\mapSBfunction({#1},{#2})}
\newcommand{\mapSBfxvw}[4]{\mapSBfunction\plusvectfxvw{#1}{#2}{#3}{#4}}
\newcommand{\mapVfunction}{\mathcal{V}}
\newcommand{\mapV}[2]{\mapVfunction({#1},{#2})}
\newcommand{\mapJfunction}{\mathcal{J}}
\newcommand{\mapJ}[2]{\mapJfunction({#1},{#2})}
\newcommand{\mapCfunction}{\mathcal{C}}
\newcommand{\mapC}[2]{\mapCfunction({#1},{#2})}
\newcommand{\mapNfunction}{\mathcal{N}}
\newcommand{\mapN}[2]{\mapNfunction({#1},{#2})}
\newcommand{\mapNifunction}[1]{\mapNfunction_{#1}}
\newcommand{\mapNi}[3]{\mapNifunction{#1}({#2},{#3})}
\newcommand{\mapRfunction}{\mathcal{R}}
\newcommand{\mapRkfunction}[1]{\mapRfunction^{#1}}
\newcommand{\mapRkfx}[3]{\mapRkfunction{#1}({#2},{#3})}
\newcommand{\mapS}[2]{\mapSfunction({#1},{#2})}
\newcommand{\mapcombifunction}{\mathcal{C}}
\newcommand{\mapcombicfunction}[1]{\mapcombifunction^{#1}}
\newcommand{\mapcombic}[3]{\mapcombicfunction{#1}({#2},{#3})}
\newcommand{\combi}{c}
\newcommand{\Combi}{R}
\newcommand{\CombiS}[1]{\Combi({#1})}
\newcommand{\Seq}{S}
\newcommand{\SeqS}[1]{\Seq({#1})}
\newcommand{\setvary}{J}
\newcommand{\setvarynot}{I}
\newcommand{\notzi}[1]{\notz_{#1}}

\newcommand{\criterion}{\theta}
\newcommand{\criterionk}[1]{\criterion^{#1}}
\newcommand{\criterionkfx}[3]{\criterionk{#1}({#2},{#3})}
\newcommand{\criterionki}[2]{\criterionk{#1}_{#2}}
\newcommand{\criterionkifx}[4]{\criterionki{#1}{#2}({#3},{#4})}

\newcommand{\subkfunction}{\kappa}
\newcommand{\subk}[1]{\subkfunction({#1})}

\newcommand{\lowerbound}{l}
\newcommand{\upperbound}{u}

\newcommand{\genericsolset}{\solset}
\newcommand{\ballfunction}{B}
\newcommand{\ballX}[1]{{#1}}
\newcommand{\ballXeps}[2]{\ballX{#1}^{#2}}
\newcommand{\ballXepsx}[3]{\ballXeps{#1}{#2}({#3})}
\newcommand{\HYPfunction}{H}
\newcommand{\HYP}[2]{\HYPfunction({#1},{#2})}
\newcommand{\polarHYPfunction}{\HYPfunction^*}
\newcommand{\polarHYPifunction}[1]{\polarHYPfunction_{#1}}
\newcommand{\polarHYP}[2]{\polarHYPfunction({#1},{#2})}
\newcommand{\polarHYPi}[3]{\polarHYPifunction{#3}({#1},{#2})}
\newcommand{\projfunction}[1]{\textup{P}^{\text{\tiny{\ensuremath{\perp}}}}_{#1}}
\renewcommand{\projx}[2]{\projfunction{#1}({#2})}
\newcommand{\jacobiansymbol}{\textup{J}}
\renewcommand{\jacobianfunction}[1]{\jacobiansymbol{#1}}
\renewcommand{\jacobian}[2]{\jacobianfunction{#1}({#2})}

\newcommand{\dualitygapfunction}{\delta}
\newcommand{\dualitygapx}[1]{\dualitygapfunction({#1})}



\newcommand{\setofinterest}{\bar{\setoflambdas}}
\newcommand{\plussetofinterest}{\makeplus{\setoflambdas}}
\newcommand{\setofinteresti}[1]{\setofinterest_{#1}}

\renewcommand{\iset}{I}
\renewcommand{\jset}{J}
\renewcommand{\SWifunction}{K}
\renewcommand{\exSWifunction}{L}
\renewcommand{\standbyfunction}{Z}

\renewcommand{\mapSWfunction}{\mathcal{K}}
\renewcommand{\mapSBfunction}{\mathcal{Z}}
\newcommand{\notn}{p}
\newcommand{\notni}[1]{\notn_{#1}}
\renewcommand{\genericn}{\notn}

\renewcommand{\deltafx}[2]{\deltaf{#1}({#2})}
\renewcommand{\deltafi}[2]{\deltaf{#1}_{#2}}
\renewcommand{\deltafix}[3]{\deltafi{#1}{#2}({#3})}

\newcommand{\incidencestochfunction}{\hat\incidence}
\renewcommand{\incidencestoch}[1]{\incidencestochfunction({#1})}
\newcommand{\incidenceistochfunction}[1]{\incidencestochfunction_{#1}}
\newcommand{\incidenceijstochfunction}[2]{\incidencestochfunction_{{#1}{#2}}}
\renewcommand{\incidenceistoch}[2]{\incidenceistochfunction{#1}({#2})}
\renewcommand{\incidenceijstoch}[3]{\incidenceijstochfunction{#1}{#2}({#3})}

\newcommand{\stochcapafunction}{\hat{\kappa}}
\newcommand{\stochcapaijfunction}[2]{\stochcapafunction_{{#1}{#2}}}
\newcommand{\stochcapaij}[4]{\stochcapaijfunction{#1}{#2}({#3},{#4})}
\renewcommand{\positive}[1]{#1^{+}}

\newcommand{\nbtwohop}[1]{\bar\nn_{i}}
\newcommand{\average}{\mu}
\newcommand{\averagei}[1]{\average_{#1}}
\newcommand{\estimaveragei}[1]{\hat\average_{#1}}
\newcommand{\estimaverageik}[2]{{\estimaveragei{#1}^{#2}}}

\newcommand{\cumuldescentsfunction}{\varphi}
\newcommand{\cumuldescents}[1]{\cumuldescentsfunction({#1})}
\newcommand{\cumuldescentstepsfunction}{\bar\tableaufunction}
\newcommand{\cumuldescentsteps}[1]{\cumuldescentstepsfunction({#1})}
\newcommand{\SAaveraging}{SA}

\newcommand{\SA}[2]{#1}

\newcommand{\moveproof}[2]{#1}

\renewcommand{\setofneighboursplus}{\setofneighbours}

\renewcommand{\expectationsymbol}{\textup{E}}

\newcommand{\Var}[1]{\textup{Var}\left[{#1}\right]}
\newcommand{\Varsmall}[1]{\textup{Var}[{#1}]}


\newcommand{\zeroP}[1]{o_P({#1})}
\newcommand{\magnitudeP}[1]{O_P({#1})}
\newcommand{\distrito}{\xrightarrow{d}}
\newcommand{\asto}{\xrightarrow{a.s.}}
\newcommand{\dir}{\delta}
\newcommand{\dirx}[1]{\dir({#1})}
\newcommand{\variation}{h}
\newcommand{\LCvariation}{\tilde\variation}
\newcommand{\secondderiv}{\bar{\variation}}
\newcommand{\secondderivdir}[1]{\secondderiv({#1})}
\newcommand{\NRV}{\nu}
\newcommand{\NRVx}[1]{\NRV({#1})}
\newcommand{\accu}{\mmu}
\newcommand{\accuk}[1]{\accu^{#1}}
\newcommand{\LCaccu}{\tilde\mmu}
\newcommand{\LCaccuk}[1]{\tilde\accu^{#1}}
\renewcommand{\LCMk}[2]{\LCMfunction(\functionk{#1},{#2})}
\newcommand{\LCMinf}[1]{\LCMfunction(\function,{#1})}
\newcommand{\remainderfunction}{\varrho}
\newcommand{\fullremainderfunction}{\rho}
\newcommand{\remainder}[1]{\remainderfunction(\function,{#1})}
\newcommand{\fullremainder}[1]{\fullremainderfunction(\function,{#1})}
\newcommand{\remainderk}[2]{\remainderfunction(\functionk{#1},{#2})}
\newcommand{\fullremainderk}[2]{\fullremainderfunction(\functionk{#1},{#2})}
\newcommand{\displacementfunction}{d}
\newcommand{\displacement}[1]{\displacementfunction({#1})}
\renewcommand{\matrixnonviolatingx}[1]{\nonviolatingsymbol}
\newcommand{\varstochgfunction}{\sigma^2}
\newcommand{\varstochg}[1]{\varstochgfunction({#1})}
\newcommand{\markovA}{\tilde{A}}
\newcommand{\markovB}{\tilde{B}}
\renewcommand{\markovA}{{A}}
\renewcommand{\markovB}{{B}}
\newcommand{\markovAk}[1]{\markovA^{#1}}
\newcommand{\markovBk}[1]{\markovB^{#1}}
\newcommand{\criticalcone}{C}
\newcommand{\critical}[1]{\criticalcone({#1})}


\newcommand{\affinea}{a}
\newcommand{\affineb}{b}
\newcommand{\affineaj}[1]{\affinea_{#1}}
\newcommand{\affinebj}[1]{\affineb_{#1}}
\newcommand{\affineai}[1]{\affinea}
\newcommand{\affinebi}[1]{\affineb}
\newcommand{\affineaij}[2]{\affineai{#1}_{#2}}
\newcommand{\affinebij}[2]{\affinebi{#1}_{#2}}
\newcommand{\affinen}{p}
\newcommand{\affineni}[1]{\affinen_{#1}}
\newcommand{\aaffinea}{\bar\affinea}
\newcommand{\aaffineb}{\bar\affineb}
\newcommand{\aaffineaj}[1]{\aaffinea_{#1}}
\newcommand{\aaffinebj}[1]{\aaffineb_{#1}}
\newcommand{\aaffineai}[1]{\aaffinea}
\newcommand{\aaffinebi}[1]{\aaffineb}
\newcommand{\aaffineaij}[2]{\aaffineai{#1}_{#2}}
\newcommand{\aaffinebij}[2]{\aaffinebi{#1}_{#2}}
\newcommand{\aaffinen}{c}
\newcommand{\aaffineni}[1]{\aaffinen_{#1}}
\newcommand{\activeaffine}{\mathcal{A}}
\newcommand{\activeaffinei}[1]{\activeaffine_{#1}}
\newcommand{\activeaffineix}[2]{\activeaffinei{#1}({#2})}
\newcommand{\activeaffinex}[1]{\activeaffine({#1})}
\newcommand{\activeaffinespecial}{\mathcal{B}}
\newcommand{\LCScalefx}[2]{\LCScale({#1},{#2})}
\newcommand{\LCScaleifx}[3]{\LCScalei{#1}({#2},{#3})}
\renewcommand{\gradftilde}{\tilde \grad \smoothfunction}
\newcommand{\bijectionfx}[1]{\bijectionf({#1})}
\renewcommand{\bijectionf}{h}
\renewcommand{\matrixnonviolatingx}[1]{\nonviolatingsymbol(#1)}

\newcommand{\refeq}[2]{\overset{\text{\tiny{#1}}}{#2}}

\newcommand{\notA}{A}
\newcommand{\notAk}[1]{\notA^{#1}}

\renewcommand{\llamb}{\notx}
\renewcommand{\mmu}{\noty}
\renewcommand{\setoflambdas}{\genericset}
\newcommand{\GPfunction}{\function}
\newcommand{\GPfunctionx}[1]{\GPfunction({#1})}

\renewcommand{\llambconv}{\llamb^\star}
\newcommand{\nbhd}{\setoflambdas^{\star}}
\newcommand{\Paok}{\hat\Pa}
\newcommand{\Paokk}[1]{\Paok^{#1}}

\newcommand{\mapGTfunction}[1]{\mapGfunction^{#1}}
\newcommand{\mapGTfx}[3]{\mapGTfunction{#1}({#2},{#3})}
\newcommand{\mapGTXfunction}[2]{\mapGfunction^{(#1,#2)}}
\newcommand{\mapGTXfx}[4]{\mapGTXfunction{#1}{#2}({#3},{#4})}

\newcommand{\componentfunction}{\mathcal{C}}
\newcommand{\componentfx}[2]{\componentfunction({#1},{#2})}
\newcommand{\componentifunction}[1]{\componentfunction_{#1}}
\newcommand{\componentifx}[3]{\componentifunction{#1}({#2},{#3})}

\renewcommand{\LCmu}{\tilde \mmu}
\renewcommand{\LCMSfunction}{\LCMfunction^{\mapSfunction}}

\newcommand{\mmudyi}[3]{\lamb_{#1} ({#2},{#3})}
 \newcommand{\LCMfx}[2]{\LCMfunction({#1},{#2})}
 \newcommand{\LCMkfunction}[1]{\LCMfunction^{\lbrack{#1}\rbrack}}
 \newcommand{\LCMkfx}[3]{\LCMkfunction{#1}({#2},{#3})}
\newcommand{\switchfunction}{\psi}
\newcommand{\Switch}{\Psi}
\newcommand{\switch}[1]{\switchfunction({#1})}

\newcommand{\identity}{\identitysymbol}
\newcommand{\tildeidentity}{\tilde{\identitysymbol}}

\newcommand{\mapZfunction}{\mathcal{Z}}
\newcommand{\mapZqfunction}[1]{\mapZfunction^{[{#1}]}}
\newcommand{\mapZqfx}[3]{\mapZqfunction{#1}({#2},{#3})}
\renewcommand{\LCMfunction}{\tilde{S}}
\renewcommand{\LCMJfunction}{\tilde{J}}
 \newcommand{\LCMJfx}[2]{\LCMJfunction({#1},{#2})}
\newcommand{\LCMRfunction}{\tilde{R}}
 \newcommand{\LCMRfx}[2]{\LCMRfunction({#1},{#2})}
\renewcommand{\LCMqfunction}[1]{\tilde{Z}^{[{#1}]}}
\newcommand{\LCMqfx}[3]{\LCMqfunction{#1}({#2},{#3})}
\newcommand{\LCMGifunction}[1]{\tilde{G}_{{#1}}}
\newcommand{\LCMGifx}[3]{\LCMGifunction{#1}({#2},{#3})}
\newcommand{\LCMgenfunction}{\tilde{M}}
\newcommand{\LCMgenfx}[2]{\LCMgenfunction({#1},{#2})}

\renewcommand{\globalMqfunction}[1]{Z^{[{#1}]}}
\newcommand{\globalMqfx}[3]{\globalMqfunction{#1}({#2},{#3})}

\newcommand{\Scaletermfunction}{Q}
\newcommand{\Scaleterm}[2]{\Scaletermfunction({#1},{#2})}


\def\stackalignment{l}
\newcommand{\makenewD}[1]{%
\bottominset{\scalebox{0.7}[0.5]{{$\backslash$}}}{${#1}$}{}{-0.5pt}
}
\newcommand{\makenewL}[1]{%
\bottominset{\scalebox{0.7}[1.4]{${\llcorner}$}}{${#1}$}{}{-0.1pt}
}
\newcommand{\newDsymbol}{\makenewD{\nabla^2}}
\newcommand{\newtildeDsymbol}{\makenewD{\tilde\nabla^2}}
\newcommand{\newDfunction}[1]{\newDsymbol{#1}}
\newcommand{\newtildeDfunction}[1]{\newtildeDsymbol{#1}}
\newcommand{\newD}[2]{\newDfunction{#1}({#2})}
\newcommand{\newtildeD}[2]{\newtildeDfunction{#1}({#2})}

\newcommand{\newLsymbol}{\makenewL{\nabla^2}}
\newcommand{\newtildeLsymbol}{\makenewL{\tilde\nabla^2}}
\newcommand{\newLfunction}[1]{\newLsymbol{#1}}
\newcommand{\newtildeLfunction}[1]{\newtildeLsymbol{#1}}
\newcommand{\newL}[2]{\newLfunction{#1}({#2})}
\newcommand{\newtildeL}[2]{\newtildeLfunction{#1}({#2})}

\newcommand{\genericblock}{U}
\newcommand{\genericL}{V}
\newcommand{\genericD}{W}
\newcommand{\genericblockij}[2]{\genericblock_{{#1}{#2}}}

\newcommand{\LCMgenericfunction}{\tilde\sigma}
\newcommand{\LCMgeneric}[1]{\LCMgenericfunction({#1})}
\newcommand{\LCMRgenericfunction}{\tilde\rho}
\newcommand{\LCMRgeneric}[1]{\LCMRgenericfunction({#1})}

\newcommand{\makeconstantD}[1]{{#1}^{(\backslash)}}
\newcommand{\makeconstantL}[1]{{#1}^{(\llcorner)}}

\newcommand{\boundK}{U}
\newcommand{\minK}{\textup{\b{$u$}}}
\newcommand{\boundKD}{\makeconstantD{\boundK}}
\newcommand{\boundKL}{\makeconstantL{\boundK}}
\newcommand{\LCboundK}{\tilde{\boundK}}
\newcommand{\LCboundKD}{\makeconstantD{\LCboundK}}
\newcommand{\LCboundKL}{\makeconstantL{\LCboundK}}
\newcommand{\boundKij}[2]{\boundK_{{#1},{#2}}}
\newcommand{\LCboundKij}[2]{\LCboundK_{{#1},{#2}}}
\newcommand{\minboundK}{\textup{\b{$\boundK$}}}
\newcommand{\minKi}[1]{\minK_{#1}}
\newcommand{\newboundK}{\hat{\boundK}}
\newcommand{\newboundKi}[1]{\newboundK_{{#1}}}
\newcommand{\Lipschitz}{L}
\newcommand{\Lipschitzi}[1]{\Lipschitz_{#1}}
\newcommand{\boundL}{L}
\newcommand{\boundLD}{\makeconstantD{\boundL}}
\newcommand{\boundLL}{\makeconstantL{\boundL}}
\newcommand{\LCboundL}{\tilde{\boundL}}
\newcommand{\LCboundLD}{\makeconstantD{\LCboundL}}
\newcommand{\LCboundLL}{\makeconstantL{\LCboundL}}
\newcommand{\boundLij}[2]{\boundL_{{#1},{#2}}}
\newcommand{\LCboundLij}[2]{\LCboundL_{{#1},{#2}}}
\newcommand{\maxL}{\bar{l}}
\newcommand{\maxboundL}{\bar{\boundL}}
\newcommand{\maxLi}[1]{\maxL_{#1}}

\newcommand{\supermmu}{\hat\mmu}
\newcommand{\superllamb}{\hat\llamb}
\newcommand{\superalpha}{\alpha}

\newcommand{\boundKs}{U^{\text{{\tiny$(\smoothfunction)$}}}}
\newcommand{\minKs}{\textup{\b{$u$}}^{\text{{\tiny$(\smoothfunction)$}}}}
\newcommand{\boundKsij}[2]{\boundKs_{{#1},{#2}}}
\newcommand{\minboundKs}{\textup{\b{$\boundK$}}^{(\smoothfunction)}}
\newcommand{\minKsi}[1]{\minKs_{#1}}
\newcommand{\boundKc}{U^{\text{{\tiny$(\convexfunction)$}}}}
\newcommand{\minKc}{\textup{\b{$u$}}^{\text{{\tiny$(\convexfunction)$}}}}
\newcommand{\boundKcij}[2]{\boundKc_{{#1},{#2}}}
\newcommand{\minboundKc}{\textup{\b{$\boundK$}}^{(\smoothfunction)}}
\newcommand{\minKci}[1]{\minKc_{#1}}

\newcommand{\correctproba}[2]{#2}
\newcommand{\probamin}{\textup{\b{$\probadistrifunction$}}}
\newcommand{\coefineq}{\mu}

\renewcommand{\constantScale}{\hat{\Scale}}
\renewcommand{\constantScalei}[1]{\constantScale_{#1}}
\newcommand{\supernormfunction}{\Psi}
\newcommand{\supernorm}[1]{\supernormfunction({#1})}

\newcommand{\smartscale}{V} 
\newcommand{\smartscalek}[1]{\smartscale^{#1}}
\newcommand{\tildesmartscale}{\tilde{\smartscale}} 

\newcommand{\expectationcond}[2]{\expectation{\left.{#1}\right\rvert{#2}}}
\newcommand{\smallexpectationcond}[2]{\smallexpectation{{#1} | {#2}}}

\renewcommand{\accu}{\llambconv}
\renewcommand{\accuk}[1]{\mmu^{#1}}
\renewcommand{\LCaccuk}[1]{\tilde{\mmu}^{#1}}

\newcommand{\Pseqfunction}{\phi}
\renewcommand{\Pseqk}[1]{\Pseqfunction^{#1}}

\newcommand{\Nlamb}{\hat \llamb}
\newcommand{\Nlambi}[1]{\Nlamb_{#1}}
\newcommand{\Nlambk}[1]{\Nlamb^{#1}}
\newcommand{\Nlambki}[2]{\Nlamb^{#1}_{#2}}
\newcommand{\NMfunction}{\hat{S}}
\newcommand{\NMfx}[2]{\NMfunction({#1},{#2})}
\newcommand{\NMRfunction}{\hat{R}}
\newcommand{\NMRfx}[2]{\NMRfunction({#1},{#2})}
\newcommand{\NMGifunction}[1]{\hat{G}_{#1}}
\newcommand{\NMGifx}[3]{\NMGifunction{#1}({#2},{#3})}
\newcommand{\NMRFfunction}{\NMRfunction^{[\smoothfunction]}}
\newcommand{\NMRFfx}[2]{\NMRFfunction({#1},{#2})}
\newcommand{\NMRSfunction}{\NMRfunction^{[\supernormfunction]}}
\newcommand{\NMRSfx}[2]{\NMRSfunction({#1},{#2})}

\newcommand{\smoothmin}{\smoothfunction^{\ballradius}}

\renewcommand{\notxconv}{\notx^\star}
\newcommand{\notyconv}{\noty^\star}
\newcommand{\notyk}[1]{\noty^{#1}}

\renewcommand{\Probafunction}{\textup{Pr}}
\newcommand{\eventin}{\theta}
\newcommand{\eventout}{\neg\eventin}
\newcommand{\eventink}[1]{\eventin^{#1}}
\newcommand{\eventoutk}[1]{\eventout^{#1}}
\newcommand{\difference}{v}
\newcommand{\differencek}[1]{\difference^{#1}}
\newcommand{\differencetwo}{w}
\newcommand{\differencetwok}[1]{\difference^{#1}}
\newcommand{\genericquantity}{r}
\newcommand{\genericquantityk}[1]{\genericquantity^{#1}}
\newcommand{\genericresidualfunction}{h}
\newcommand{\genericresidual}[1]{\genericresidualfunction({#1})}
\newcommand{\LCMHfunction}{H}
\newcommand{\LCMHfx}[2]{\LCMHfunction}

\newcommand{\strongconvexity}{c}

\newcommand{\setoffunctions}{F}
\newcommand{\setoffunctionsn}[1]{\setoffunctions({#1})}

\newcommand{\NDG}{G}
\newcommand{\NDGk}[1]{\NDG^{#1}}
\newcommand{\NDGki}[2]{\NDGk{#1}_{#2}}
\newcommand{\NDMset}{\Sigma}
\newcommand{\NDM}{\genericnormscale}
\newcommand{\NDMk}[1]{\NDM^{#1}}
\newcommand{\NDMki}[2]{\NDMk{#1}}
\newcommand{\NDH}{H}
\newcommand{\NDHx}[1]{\NDH({#1})}
\newcommand{\NDHmax}{\bar\NDH}

\renewcommand{\GPfunction}{\smoothfunction}
\renewcommand{\function}{\smoothfunction}
\renewcommand{\functionk}[1]{\smoothfunction^{#1}}

\newcommand{\smoothRfunction}{\mathcal{R}}
\newcommand{\smoothRi}[1]{\smoothRfunction_{#1}}
\newcommand{\smoothRix}[2]{\smoothRi{#1}({#2})}
\newcommand{\smoothVfunction}{{R}}
\newcommand{\smoothVi}[1]{\smoothVfunction_{#1}}
\newcommand{\smoothVix}[2]{\smoothVi{#1}({#2})}

\newcommand{\compositeRfunction}{\mathcal{C}}
\newcommand{\compositeRi}[1]{\compositeRfunction_{#1}}
\newcommand{\compositeRix}[2]{\compositeRi{#1}({#2})}
\newcommand{\compositeVfunction}{{C}}
\newcommand{\compositeVi}[1]{\compositeVfunction_{#1}}
\newcommand{\compositeVix}[2]{\compositeVi{#1}({#2})}
\newcommand{\compositevfunction}{{c}}
\newcommand{\compositevi}[1]{\compositevfunction_{#1}}
\newcommand{\compositevixy}[3]{\compositevi{#1}({#2},{#3})}

\newcommand{\convexfunction}{g}
\newcommand{\convexifunction}[1]{\convexfunction_{#1}}
\newcommand{\convex}[1]{\convexfunction({#1})}
\newcommand{\convexi}[2]{\convexifunction{#1}({#2})}
\newcommand{\compositefunction}{h}
\newcommand{\composite}[1]{\compositefunction({#1})}

\newcommand{\subdifferential}{S}
\newcommand{\subdiff}[1]{\subdifferential({#1})}
\newcommand{\subdifferentiali}[1]{\subdifferential_{#1}}
\newcommand{\subdiffi}[2]{\subdifferentiali{#1}({#2})}
\newcommand{\subgrad}{s}
\newcommand{\subgradi}[1]{\subgrad_{#1}}

\newcommand{\functionHfunction}{H}
\newcommand{\functionHxT}[2]{\functionHfunction({#1},{#2})}
\newcommand{\vecT}{t}
\newcommand{\vecTi}[1]{\vecT_{#1}}
\newcommand{\optT}{\bar{\vecT}}
\newcommand{\optTi}[1]{\optT_{#1}}
\newcommand{\optTx}[1]{\optT({#1})}
\newcommand{\optTix}[2]{\optTi{#1}({#2})}

\newcommand{\makems}[1]{\bar{#1}}
\newcommand{\makeps}[1]{\textup{\b{${#1}$}}}
\newcommand{\LmT}{T}
\newcommand{\LmH}{H}
\newcommand{\LmL}{L}
\newcommand{\Lml}{l}
\newcommand{\Lmlamb}{\lambda}
\newcommand{\LmD}{D}
\newcommand{\LmG}{G}
\newcommand{\LmM}{M}
\newcommand{\LmZ}{Z}

\newcommand{\identityps}{\makeps{\identity}}
\newcommand{\identityms}{\makems{\identity}}
\newcommand{\identitypsi}[1]{\identityps_{#1}}
\newcommand{\identitymsi}[1]{\identityms_{#1}}
\newcommand{\LmTps}{\makeps{\LmT}}
\newcommand{\LmLps}{\makeps{\LmL}}
\newcommand{\LmDps}{\makeps{\LmD}}
\newcommand{\Lmlps}{\makeps{\Lml}}
\newcommand{\LmTms}{\makems{\LmT}}
\newcommand{\LmLms}{\makems{\LmL}}
\newcommand{\LmDms}{\makems{\LmD}}
\newcommand{\LmZms}{\makems{\LmZ}}
\newcommand{\LmTi}[1]{\LmT_{#1}}
\newcommand{\LmTpsi}[1]{\LmTps_{#1}}
\newcommand{\LmTmsi}[1]{\LmTms_{#1}}
\newcommand{\LmDi}[1]{\LmD_{#1}}
\newcommand{\LmDpsi}[1]{\LmDps_{#1}}
\newcommand{\LmDmsi}[1]{\LmDms_{#1}}
\newcommand{\LmLi}[1]{\LmL_{#1}}
\newcommand{\LmLpsi}[1]{\LmLps_{#1}}
\newcommand{\LmLmsi}[1]{\LmLms_{#1}}
\newcommand{\Lmli}[1]{\Lml_{#1}}
\newcommand{\Lmlpsi}[1]{\Lmlps_{#1}}
\newcommand{\Lmlambi}[1]{\Lmlamb_{#1}}
\newcommand{\LmGi}[1]{\LmG_{#1}}
\newcommand{\LmZmsi}[1]{\LmZms_{#1}}
\newcommand{\LmMi}[1]{\LmM_{#1}}
\newcommand{\LmHij}[2]{\LmH_{{#1}{#2}}}

\newcommand{\matrixsep}{\, & \,}
\newcommand{\makeblockmatrixnine}[9]{\left(\begin{array}{rrr}{#1}\matrixsep{#2}\matrixsep{#3}\\{#4}\matrixsep{#5}\matrixsep{#6}\\{#7}\matrixsep{#8}\matrixsep{#9}\end{array}\right)}
\newcommand{\makeblockmatrixfour}[4]{\left(\begin{array}{rr}{#1}\matrixsep{#2}\\{#3}\matrixsep{#4}\end{array}\right)}

\newcommand{\scalarp}[2]{\langle{#1},{#2}\rangle}
\newcommand{\Scalarp}[2]{\left\langle{#1},{#2}\right\rangle}

\newcommand{\jotaIEEE}[2]{#1}
\renewcommand*{\QEDA}{}

\title{Asymptotic convergence rates for coordinate descent in polyhedral sets} 

\author{
Olivier Bilenne
\jotaIEEE{}{
\thanks{O. Bilenne is with the Control Systems Group, Technical University of Berlin, Germany.}
}
}

\jotaIEEE{
\institute{%
Olivier Bilenne\\
Control Systems Group, Technical University of Berlin, Germany\\
\email{bilenne@control.tu-berlin.de}
}

}{}

%
%

\markboth{}{} 
%



\maketitle

\vskip0mm

\begin{abstract}
We consider a family of parallel methods for constrained optimization based on projected gradient descents along individual coordinate directions. In the case of polyhedral feasible sets, local convergence towards a regular solution occurs unconstrained in a reduced space, allowing for the computation of tight asymptotic convergence rates by sensitivity analysis, this even when global convergence rates are unavailable or too conservative. We derive linear asymptotic rates of convergence in polyhedra for variants of the coordinate descent approach, including cyclic, synchronous, and random modes of implementation. Our results find application in stochastic optimization, and with recently proposed optimization algorithms based on Taylor approximations of the Newton step.
\end{abstract}
\obsolete{
\begin{abstract}
We consider a family of parallel  methods for constrained optimization based on projected gradient descents along individual coordinate directions. 
Coordinate descent methods typically prove to converge linearly for strongly convex functions. In the case of polyhedral feasible sets, asymptotic convergence towards a regular solution 
occurs unconstrained in a reduced space,
allowing for the computation of exact asymptotic  convergence rates by sensitivity analysis.
This study derives linear asymptotic rates of convergence in polyhedra for variants of the coordinate descent approach,  including cyclic, synchronous, and random modes of implementation. 
Our results find  application in stochastic optimization, and with recently proposed optimization algorithms based on Taylor approximations of the Newton step.
\end{abstract}
}%

\obsolete{
\jotaIEEE{%
\keywords{coordinate descent
\and asymptotic convergence
\and rate of convergence
\and sensitivity analysis
\and convex optimization
\and parallel optimization
\and distributed optimization
\and constrained optimization
\and polyhedron
\and Gauss-Seidel method
\and cyclic coordinate descent
\and Jacobi method
\and random coordinate descent
\and gradient methods
\and stochastic optimization
\and gradient methods
}%
}{
\begin{IEEEkeywords}
IEEEtran, journal, \LaTeX, paper, template.
\end{IEEEkeywords}
}%
}%

%
\jotaIEEE{}{\IEEEpeerreviewmaketitle}

\section{Introduction}

The interest for the coordinate descent methods lies in their simplicity of implementation and  flexibility~\cite{bertsekas97,bertsekas99}.
Yet their performances in terms of speed of convergence are generally modest compared to their centralized counterparts and still subject to active research.
In this work we derive the asymptotic convergence rates of parallel implementations of the gradient projection algorithm~\cite{bertsekas76} in the context of the constrained minimization of  a strictly convex, continuously differentiable function 
over a polyhedral feasible set---this class of problems  is met for instance in bound-constrained {optimization} or in dual optimization. 
Our developments rely on the property of projected gradient methods to asymptotically behave, when applied in a polyhedral feasible set specified by a collection of affine inequality constraints, like unconstrained gradient descents on the surface of the polyhedron, provided that the gradient of the cost function at the point of convergence be a negative combination of the normal vectors of the active constraints.
This property facilitates 
the derivation of rates of convergence in the form of matrices 
playing roles analogous to those of system matrices in linear state space models, thus reducing the question of the convergence  to the spectral analysis of  matrices.


\emph{Outline ---}
Section~\ref{sectiongradientprojection} formulates the gradient projection algorithm and identifies certain properties enjoyed by the  method  in polyhedral  sets. From the initial algorithm we derive parallelized implementations operating gradient descents along coordinate directions, each of them characterized by the way these operations are organized (synchronously, cyclically, randomly, etc.). In Section~\ref{sectionconvergencerates} we compute asymptotic convergence rates for the parallel algorithms under the hypothesis of twice continuous differentiability 
at the point of convergence. Our developments are then reconsidered for non-twice differentiable cost functions and from the perspective  of stochastic optimization settings.

\emph{Notation ---}
In this paper vectors are column vectors and denoted by $\notx =  \vect{\notxi{1},...,\notxi{\nn}}$, where $\notxi{1},...,\notxi{\nn}$ are the coordinates of~$\notx$. 
Subscripts are reserved for vector coordinates.
The transpose of a  vector $\notx\in\REAL{\notn}$ is denoted by~$\notx^\trsp$ and its Euclidean norm by~$\norm{\notx}$;  for any~$\genericnormscale\in\REAL{\notn\times \notn}$ symmetric, positive definite, we define the scaled norm 
$ \specialnorm{\xx}{\genericnormscale}\doteq (\xx^\trsp \genericnormscale \xx)^{-\frac{1}{2}}$.
%
%
%
Let~$\notS$ be a finite set, $\{\Pak{k}\}$  a sequence in~$\notS$, and $\Pa\in\notS$. We write $\Pak{k}\to\Pa$ iff there is a~$\ksc$ such that $\Pak{k}=\Pa$ for  $k>\ksc$. Similarly, for $\notA\subset \notS$ and a sequence $\{\notAk{k}\}$ of subsets of~$\notS$, we write~$\notAk{k}\to\notA$ iff there is a~$\ksc$ such that $\notAk{k}\equiv\notA$ for  $k>\ksc$.

\section{The gradient projection algorithm}\label{sectiongradientprojection}

\subsection{Formulation}

Consider a closed convex subset~$\setoflambdas$ of a real vector space~$\RMM$ and a function $\function\in\setoffunctionsn{\MM}$, where, for any space~$\REAL{\genericn}$, $\setoffunctionsn{\genericn}$ denotes the set of the functions $\REAL{\genericn}\functionto\REALone$ strictly convex, continuously differentiable with gradient~$\grad\function$ Lipschitz continuous. 
Lipschitz continuity of~$\grad\function$ can be understood as the existence of  a   symmetric, positive definite matrix $\Lipschitz\in\REAL{\genericn\times\genericn} $ satisfying 
$
[\grad\functionx{\llamb}-\grad\functionx{\mmu}]^\trsp(\llamb-\mmu)
\leq
\specialnorm{\llamb-\mmu}{\boundL}^2
$ for any $ \llamb,\mmu\in\REAL{\genericn}$. It follows from this condition that\footnotemark{}
%
%
%
\begin{align}
&
\grad\smooth{\llamb}^\trsp(\mmu-\llamb)
\geq \label{Lequation}
\smooth{\mmu}-\smooth{\llamb}-\frac{1}{2}\specialnorm{\llamb-\mmu}{\Lipschitz}^2
,
&
\forall \llamb,\mmu\in\REAL{\genericn}.
\end{align}
%
%
\footnotetext{
To show~(\ref{Lequation}), use for instance
$
\smooth{\mmu}
=
\smooth{\llamb}
+
\grad\smooth{\mmu}^\trsp(\mmu-\llamb)
+
\int_{0}^{1} [ 
\grad\smooth{\llamb+\xi(\mmu-\llamb)}
-
\grad\smooth{\mmu}
   ]^\trsp 
(\mmu-\llamb)  \, d\xi
$.
}%
\obsolete{
Since the paper is concerned with the asymptotic properties of convergent gradient descent algorithms at a supposedly unique point of convergence~$\llambconv$,  it is assumed---at least in the vicinity of the convergence point----that the function~$\function$ is continuously differentiable, strictly convex with positive curvature (strong convexity), and that~$\grad\function$ is Lipschitz continuous with constant $\lipschitz>0$, i.e.
$
\norm{\grad\functionx{\llamb}-\grad\functionx{\mmu}}
\leq
\lipschitz
\norm{\llamb-\mmu}
$
holds near~$\llambconv$, which is a common assumption for gradient methods.

-----------

}%
\obsolete{
Let~$\CCLall{\genericn}$ denote the class of 
the functions~$\GPfunction:\REAL{\notn}\functionto\REALonepinfty$ proper and lower semicontinuous,
continuously differentiable on their domain, and such that~$\grad\GPfunction$ is Lipschitz continuous on~$\dom{\GPfunction}$ with Lipschitz constant~$\lipschitz$.
}%
Let~$\eigentbound$ and~$\eigenTbound$ be two positive scalar constants such that~$0 < \eigentbound \leq\eigenTbound < \infty $.
For any real space~$\REAL{\genericn}$, we let~$\setofScalesn{\genericn}  $ define the set of the symmetric, positive definite scaling matrices in~$\REAL{\genericn\times\genericn}$ with eigenvalues 
bounded by~$\eigentbound$ and~$\eigenTbound$, i.e.
$ 
\setofScalesn{\genericn}  = \{ \Scale \in  \REAL{\genericn\times\genericn} : \eigentbound I \matrixleq \Scale \matrixleq \eigenTbound 	I \} 
$. 
We consider the following algorithm. 

\begin{algorithm}[Scaled gradient projection~$\mapGTXfunction{\Scale}{\genericset}$]
\label{scaledgradientprojectionmapping} 
%
%
%
Consider a closed, convex set $\genericset\subset\REAL{\genericn}$, a function $\function\in\setoffunctionsn{\genericn}$,
 a scaling mapping
%
$\Scale:\setoffunctionsn{\genericn}\times\genericset\functionto\setofScalesn{\genericn}$,
 fixed scalar parameters $  \Pbeta, \Psig \in (0,1)$,
and an initial point~$\notxk{0}\in\genericset$.
A scaled gradient projection algorithm is given by 
\begin{equation}\label{gradientprojectionalgorithm}
\notxk{k+1}=\mapGTXfx{\Scale}{\genericset}{\GPfunction}{\notxk{k}},
\hspace{5mm}k=0,1,2,...,
\end{equation}
with~$\mapGTXfunction{\Scale}{\genericset}$ 
defined
for
$\notx\in\genericset$
by
$\mapGTXfx{\Scale}{\genericset}{\GPfunction}{\notx}\doteq\searchfxa{\GPfunction}{\notx}{
\newPa{\Paok}{\mapAfx{\GPfunction}{\notx}}
}$,
where
\begin{equation} \label{globalequation}
\searchfxa{\GPfunction}{\notx}{\Pa}
\in
\arg\min\nolimits
_{\noty\in\genericset}
\grad\GPfunctionx{\notx}^\trsp(\noty-\notx)+\frac{1}{2}
\specialnorm{\noty-\notx}{[\Pa \Scalefx{\GPfunction}{\notx}]^{-1}}^2
,
\ \forall \Pa>0,
\end{equation}
and
\newPa{$\Paok$ is an appropriate step size  bounded above~$0$.}{$\mapAfunction:\CCLall{\genericn}\times\genericset\functionto (0,1]$ is an appropriate step size rule.}
\end{algorithm}
%

%
Any  point $\notx\in\genericset$  such that $\mapGTXfx{\Scale}{\genericset}{\GPfunction}{\notx}=\notx$ is called stationary.
Since by assumption~$\GPfunction$ is convex, the stationary points coincide with the solutions of the  minimization of~$\GPfunction$. The first-order optimality condition of a point~$\notx\in\genericset$ is therefore given by
\begin{equation}\label{firstorderoptimalitycondition}
\grad\GPfunctionx{\notx}^\trsp(\noty-\notx)
\geq
0
,\hspace{5mm}\forall \noty\in\genericset
.
\end{equation}
%
If~$\GPfunction$ is strictly convex, (\ref{firstorderoptimalitycondition}) holds for at most one point and there is at most one solution. Notice that condition~(\ref{firstorderoptimalitycondition})
reduces, for the subproblem~(\ref{globalequation}) and any step size $\Pa>0$, to
\begin{equation}\label{gradientdescentoptimalitycondition}
\left[\grad\GPfunctionx{\notx}+[\Pa \Scalefx{\GPfunction}{\notx}]^{-1}(\searchfxa{\GPfunction}{\notx}{\Pa}-\notx) \right]^\trsp(\noty-\searchfxa{\GPfunction}{\notx}{\Pa})
\geq
0
,\hspace{5mm}\forall \noty\in\REAL{\genericn}
.
\end{equation}

The notion of \lq{}gradient projection\rq{} in Algorithm~\ref{scaledgradientprojectionmapping} can be explained by the observation that~$\searchfxa{\GPfunction}{\notx}{\Pa}$ in~(\ref{globalequation}) coincides with the scaled projection on~$\genericset$,
\begin{equation}\label{equationprojection}
\searchfxa{\GPfunction}{\notx}{\Pa}\in
\arg\min\nolimits_{\noty\in\genericset}
\specialnorm{\noty-\notz}{\Scalefx{\GPfunction}{\notx}^{-1}}^2
,
\end{equation}
of the vector 
$
\notz=\notx-\Pa\Scalefx{\GPfunction}{\notx}\grad\GPfunctionx{\notx}
$ 
obtained by scaled gradient descent from~$\notx$.
It follows from the convexity of~$\genericset$ and from the  projection theorem~\cite[Proposition~3.7 in Section~3.3]{bertsekas97} that~$\searchfxa{\GPfunction}{\notx}{\Pa}$ is uniquely defined in~(\ref{equationprojection}) and~(\ref{globalequation}).

Global convergence of~(\ref{gradientprojectionalgorithm}) is commonly guaranteed 
by
using an approximate line search rule 
of the type {Armijo}~\cite{armijo66},
which consists of 
\newPa{setting $\Paok\equiv\mapAfx{\GPfunction}{\notxk{k}}$ where, for $\notx\in\genericset$, $\mapAfx{\GPfunction}{\notx}$ is defined as}{setting~$\mapAfx{\GPfunction}{\notx}$ to} 
the largest $\Pa\in\{ \Pbeta^{\Pm} \}_{\Pm=0}^{\infty}$ satisfying
\begin{equation} \label{stocharmijoconditionglobalone} 
%
\GPfunctionx{ \notx} - \GPfunctionx{\searchfxa{\GPfunction}{\notx}{\Pa}} \geq {\Psig}  
\specialnorm{\searchfxa{\GPfunction}{\notx}{\Pa}-\notx}{[\Pa \Scalefx{\GPfunction}{\notx}]^{-1}}^2
.
\end{equation}
From~\cite{bertsekas76} we know that the step-sizes computed by~(\ref{stocharmijoconditionglobalone}) are restricted to a set~$[\Pamin,1]$, where $\Pamin>0$ is a function of the Lipschitz constant of~$\grad\function$. 
\obsolete{
Since the present study is focused on local convergence issues, we will not extend on the technique of approximate line search  considering that the range of the step-sizes computed by line search ideally reduces to~$\{1\}$ when approaching the point of convergence, and step-size selection rules such as~(\ref{stocharmijoconditionglobalone}) become asymptotically trivial, as illustrated by the next result.
}%
When the algorithm is appropriately designed, 
(\ref{stocharmijoconditionglobalone}) becomes asymptotically trivial. This is illustrated by the next result, shown in the Appendix, where~$\Lipschitz$ denotes the Lipschitz constant in the sense of~(\ref{Lequation}).
\begin{proposition}[Line search efficiency]\label{propositionefficiency}
Suppose that Algorithm~\ref{scaledgradientprojectionmapping} is implemented with the step-size selection rule~(\ref{stocharmijoconditionglobalone}) and generates a sequence~$\{\notxk{k}\}$ converging to a stationary point~$\notxconv$. 
If
\begin{equation} \label{conditionefficiencynonlinear}
2 (1-\Psig) \Scalefx{\GPfunction}{\notx}^{-1}  \matrixgeq \Lipschitz,
\hspace{5mm}
\forall \notx\in\genericset,
\end{equation}
then $\mapAfx{\GPfunction}{\notxk{k}}= 1$ for all $k$.
If~$\Scalefx{\GPfunction}{\cdot}$ is continuous, $\GPfunction$ is twice continuously differentiable in a neighborhood of~$\notxconv$, and
\begin{equation} \label{conditionefficiency}
2 (1-\Psig) \Scalefx{\GPfunction}{\notxconv}^{-1}  \matrixg \hessian{\GPfunction}{\notxconv},
\end{equation}
then $\mapAfx{\GPfunction}{\notxk{k}}\to 1$.
\end{proposition}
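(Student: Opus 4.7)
The plan is to verify that the maximal candidate step $\Pa = 1$ already satisfies the Armijo acceptance test~(\ref{stocharmijoconditionglobalone}), whereupon the line search returns $\mapAfx{\GPfunction}{\notx} = 1$. For both parts, let $\Delta \doteq \searchfxa{\GPfunction}{\notx}{1} - \notx$. The key observation, obtained by substituting $\noty = \notx$ (which is feasible since $\notx \in \genericset$) into the projection optimality condition~(\ref{gradientdescentoptimalitycondition}) at $\Pa = 1$, is the inequality
\begin{equation}\label{plan:keybound}
-\grad\GPfunctionx{\notx}^\trsp \Delta \;\geq\; \Delta^\trsp \Scalefx{\GPfunction}{\notx}^{-1}\Delta,
\end{equation}
which converts the first-order descent afforded by $\Delta$ into a scaled squared length directly comparable to the right-hand side of the Armijo test.

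For part~1, combine~(\ref{plan:keybound}) with the global Lipschitz bound~(\ref{Lequation}) applied with $\llamb = \notx$ and $\mmu = \searchfxa{\GPfunction}{\notx}{1}$ to obtain
\begin{equation*}
\GPfunctionx{\notx} - \GPfunctionx{\searchfxa{\GPfunction}{\notx}{1}}
\;\geq\; -\grad\GPfunctionx{\notx}^\trsp \Delta - \tfrac{1}{2}\Delta^\trsp \Lipschitz \Delta
\;\geq\; \Delta^\trsp\bigl[\Scalefx{\GPfunction}{\notx}^{-1} - \tfrac{1}{2}\Lipschitz\bigr]\Delta.
\end{equation*}
Subtracting the Armijo right-hand side at $\Pa = 1$, namely $\Psig\, \Delta^\trsp \Scalefx{\GPfunction}{\notx}^{-1} \Delta$, reduces acceptance to $\Delta^\trsp \bigl[\,2(1-\Psig)\Scalefx{\GPfunction}{\notx}^{-1} - \Lipschitz\,\bigr] \Delta \geq 0$, which is exactly~(\ref{conditionefficiencynonlinear}). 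Hence $\Pa = 1$ is always accepted and $\mapAfx{\GPfunction}{\notxk{k}} = 1$ for every $k$.

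Part~2 follows the same template, but with the global quadratic bound replaced by a local second-order Taylor expansion about $\notxconv$. By $C^2$-smoothness and continuity of $\hessian{\GPfunction}{\cdot}$ on a neighbourhood of $\notxconv$, for any $\epsilon_1 > 0$ that neighbourhood can be shrunk so that $\lvert \GPfunctionx{\noty} - \GPfunctionx{\notx} - \grad\GPfunctionx{\notx}^\trsp(\noty - \notx) - \tfrac{1}{2}(\noty - \notx)^\trsp \hessian{\GPfunction}{\notxconv}(\noty - \notx)\rvert \leq \epsilon_1 \|\noty - \notx\|^2$ uniformly in $(\notx, \noty)$ on it. Combined with~(\ref{plan:keybound}), this yields
\begin{equation*}
\GPfunctionx{\notx} - \GPfunctionx{\searchfxa{\GPfunction}{\notx}{1}} \;\geq\; \Delta^\trsp \bigl[\,\Scalefx{\GPfunction}{\notx}^{-1} - \tfrac{1}{2}\hessian{\GPfunction}{\notxconv}\,\bigr] \Delta - \epsilon_1 \|\Delta\|^2,
\end{equation*}
and the Armijo test at $\Pa = 1$ reduces to $\Delta^\trsp \bigl[\,2(1-\Psig)\Scalefx{\GPfunction}{\notx}^{-1} - \hessian{\GPfunction}{\notxconv}\,\bigr]\Delta \geq 2\epsilon_1 \|\Delta\|^2$. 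The strict matrix inequality~(\ref{conditionefficiency}) produces some $\epsilon > 0$ with $2(1-\Psig)\Scalefx{\GPfunction}{\notxconv}^{-1} - \hessian{\GPfunction}{\notxconv} \matrixgeq 2\epsilon I$; continuity of $\Scalefx{\GPfunction}{\cdot}$ propagates this to $\matrixgeq \epsilon I$ on a neighbourhood of $\notxconv$, which dominates the remainder once $\epsilon_1$ is chosen small enough.

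It remains to observe that along the convergent sequence $\{\notxk{k}\}$, both $\notxk{k}$ and $\searchfxa{\GPfunction}{\notxk{k}}{1}$ lie in the designated neighbourhood of $\notxconv$ for $k$ large, and $\Delta^k \to 0$: the latter follows from non-expansiveness of the projection on the convex set $\genericset$, continuity of $\grad\GPfunction$, and the identity $\searchfxa{\GPfunction}{\notxconv}{1} = \notxconv$ (since $\notxconv$ is stationary, this reduces to~(\ref{firstorderoptimalitycondition})). Consequently the Armijo inequality at $\Pa = 1$ holds for all sufficiently large $k$, the line search returns $\mapAfx{\GPfunction}{\notxk{k}} = 1$ eventually, and $\mapAfx{\GPfunction}{\notxk{k}} \to 1$. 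The main technical obstacle is the uniform control of the Taylor remainder over a neighbourhood of $\notxconv$; everything else is algebra that leverages~(\ref{plan:keybound}) together with the stated matrix inequalities.
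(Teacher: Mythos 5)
Your proposal is correct and follows essentially the same route as the paper: both parts hinge on the optimality condition~(\ref{gradientdescentoptimalitycondition}) evaluated at $\noty=\notx$ (your inequality~(\ref{plan:keybound})), combined with the Lipschitz bound~(\ref{Lequation}) for the global statement and a second-order Taylor expansion with uniformly controlled remainder for the local one. The only cosmetic differences are that you expand about $\notxconv$ with an explicit $\epsilon_1$-remainder where the paper expands about $\notxk{k}$ with a $o(\cdot)$ term, and you spell out the argument that $\searchfxa{\GPfunction}{\notxk{k}}{1}-\notxk{k}\to 0$ where the paper invokes continuity of the mapping; neither affects the substance.
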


\obsolete{

\begin{proposition}[Asymptotic line search efficiency]\label{propositionefficiency}
Suppose that Algorithm~\ref{scaledgradientprojectionmapping} is implemented with the step-size selection rule~(\ref{stocharmijoconditionglobalone}) and generates a sequence~$\{\notxk{k}\}$ converging to a solution~$\llambconv$ in a neighborhood of which~$\Scalefx{\GPfunction}{\cdot}$ is continuous and~$\GPfunction$ is twice continuously differentiable.
If
\begin{equation} \label{conditionefficiency}
2 (1-\Psig) \Scalefx{\GPfunction}{\notxconv}^{-1}  \matrixg \hessian{\GPfunction}{\notxconv},
\end{equation}
then $\mapAfx{\GPfunction}{\notxk{k}}\to 1$ as $k\to\infty$.
\end{proposition}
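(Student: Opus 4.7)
The plan is to verify directly that, under each set of hypotheses, the Armijo condition~(\ref{stocharmijoconditionglobalone}) holds at $\Pa=1$: since the line-search rule returns the largest $\Pa\in\{\Pbeta^\Pm\}_{\Pm=0}^{\infty}$ satisfying that condition and $\Pa=1=\Pbeta^{0}$ is the first candidate, showing Armijo at $\Pa=1$ immediately yields $\mapAfx{\GPfunction}{\notx}=1$. The proof thus reduces to lower-bounding the descent $\smooth{\notx}-\smooth{\searchfxa{\smooth}{\notx}{1}}$ by $\Psig\,\specialnorm{d}{\Scalefx{\smooth}{\notx}^{-1}}^{2}$, where $d\doteq\searchfxa{\smooth}{\notx}{1}-\notx$.

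For the first claim I would combine two ingredients. The subproblem optimality~(\ref{gradientdescentoptimalitycondition}) at $\Pa=1$, tested against $\noty=\notx\in\genericset$, yields $\grad\smooth{\notx}^{\trsp} d \leq -\specialnorm{d}{\Scalefx{\smooth}{\notx}^{-1}}^{2}$. Rearranging the Lipschitz inequality~(\ref{Lequation}) with $\llamb=\notx$ and $\mmu=\searchfxa{\smooth}{\notx}{1}$ gives the quadratic upper bound $\smooth{\searchfxa{\smooth}{\notx}{1}} \leq \smooth{\notx}+\grad\smooth{\notx}^{\trsp} d + \tfrac{1}{2}\specialnorm{d}{\Lipschitz}^{2}$. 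Subtracting and invoking $2(1-\Psig)\Scalefx{\smooth}{\notx}^{-1}\matrixgeq\Lipschitz$---which implies $\specialnorm{d}{\Lipschitz}^{2}\leq 2(1-\Psig)\specialnorm{d}{\Scalefx{\smooth}{\notx}^{-1}}^{2}$---delivers exactly the Armijo bound at $\Pa=1$, uniformly in $\notx\in\genericset$.

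For the second claim the scheme is the same, but the global Lipschitz bound is replaced by a local second-order Taylor expansion of $\smooth$ at $\notxconv$. Since $\notxk{k}\to\notxconv$ and the map $\notx\mapsto\searchfxa{\smooth}{\notx}{1}$ is continuous (projection with continuous data and continuous $\Scalefx{\smooth}{\cdot}$), with $\searchfxa{\smooth}{\notxconv}{1}=\notxconv$ by stationarity, one has $d^{k}\to 0$. Continuity of $\hessian{\smooth}{\cdot}$ near $\notxconv$ then yields $\smooth{\searchfxa{\smooth}{\notxk{k}}{1}} = \smooth{\notxk{k}}+\grad\smooth{\notxk{k}}^{\trsp} d^{k} + \tfrac{1}{2}(d^{k})^{\trsp}\hessian{\smooth}{\notxconv}d^{k} + o(\norm{d^{k}}^{2})$. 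Combining with the optimality bound and using continuity of $\Scalefx{\smooth}{\cdot}$ at $\notxconv$ to replace $\Scalefx{\smooth}{\notxk{k}}^{-1}$ by $\Scalefx{\smooth}{\notxconv}^{-1}$ (modulo an $o(\norm{d^{k}}^{2})$ error), the Armijo residual reduces to $(d^{k})^{\trsp}[(1-\Psig)\Scalefx{\smooth}{\notxconv}^{-1}-\tfrac{1}{2}\hessian{\smooth}{\notxconv}]d^{k}+o(\norm{d^{k}}^{2})$. The strict matrix inequality~(\ref{conditionefficiency}) gives a strictly positive minimum eigenvalue for the bracket, which dominates the $o(\norm{d^{k}}^{2})$ term for $k$ large, so Armijo holds at $\Pa=1$ eventually and $\mapAfx{\smooth}{\notxk{k}}\to 1$.

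The main technical point is to control, uniformly, both the Taylor remainder and the substitution error between $\Scalefx{\smooth}{\notxk{k}}^{-1}$ and $\Scalefx{\smooth}{\notxconv}^{-1}$ as $o(\norm{d^{k}}^{2})$; this is routine given the assumed continuity of $\hessian{\smooth}{\cdot}$ and of $\Scalefx{\smooth}{\cdot}$ together with the uniform spectral bounds $\eigentbound I \matrixleq \Scale \matrixleq \eigenTbound I$ on the scalings.
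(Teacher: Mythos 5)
Your proof is correct and follows essentially the same route as the paper's: the paper likewise combines the subproblem optimality condition~(\ref{gradientdescentoptimalitycondition}) with a second-order Taylor expansion near~$\notxconv$ (after noting, via stationarity and continuity of the projection mapping, that the displacements vanish) to verify the Armijo inequality at $\Pa=1$ for large~$k$ under the strict matrix inequality~(\ref{conditionefficiency}). The only cosmetic difference is that the paper expands with $\hessian{\GPfunction}{\notxk{k}}$ and invokes continuity afterwards, whereas you expand directly with $\hessian{\GPfunction}{\notxconv}$; these are equivalent.
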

\begin{proof}
Let~$\notx\in\genericset$ and $\Pa\in(0,1]$, and define 
\begin{equation}
\locus{\noty}=(\noty-  \notx)^\trsp[\grad\GPfunctionx{\notx}+[\Pa \Scalefx{\GPfunction}{\notx}]^{-1}(\noty-\notx)],
\end{equation} 
where the second factor in the right member is the gradient of the cost approximation in~(\ref{globalequation}).
The locus of all the points~$\noty$ where the level sets of the cost approximation  are tangent to $\noty-\notx$ is thus given by $\{y\in\RMM\setst\locus{\noty}=0\}$.
%

From~(\ref{equationprojection}) we know that~$\searchfxa{\GPfunction}{\notx}{\Pa}$ is the unique projection of the point~$\notz=\notx-\Pa\Scalefx{\GPfunction}{\notx}$ on~$\genericset$.
Hence all the possible locations for~$\searchfxa{\GPfunction}{\notx}{\Pa}$ are included in the set $\{y\in\RMM\setst\locus{\noty}\leq 0\}$. Indeed, if we suppose that $\locus{\searchfxa{\GPfunction}{\notx}{\Pa}}> 0$, then by convexity of~$\genericset$ it is possible to find a point  on the open segment $(\notx, \searchfxa{\GPfunction}{\notx}{\Pa}  )$ that belongs to~$\genericset$ and minimizes~(\ref{globalequation}), which contradicts the uniqueness of~$\searchfxa{\GPfunction}{\notx}{\Pa}$.
%

For any~$\noty$ in the vicinity of~$\notx$ we find,  by Taylor's theorem,
\begin{equation}\label{taylortheorem}
\begin{array}{c}
\GPfunctionx{\noty}=\GPfunctionx{\notx}+\grad\GPfunctionx{\notx}^\trsp(\noty-\notx)+\frac{1}{2}(\noty-\notx)^\trsp\hessian{\GPfunction}{\notx}(\noty-\notx)+\zero{\norm{\noty-\notx}^2}.
\end{array}
\end{equation}
It follows from~(\ref{taylortheorem})  
that~(\ref{stocharmijoconditionglobalone}) is satisfied when $\locus{\searchfxa{\GPfunction}{\notx}{\Pa}}\leq 0$
if
\begin{equation}\label{equationefficiency}
\begin{array}{c}
(\searchfxa{\GPfunction}{\notx}{\Pa}-\notx)^\trsp
\left[
\frac{1}{2}\hessian{\GPfunction}{\notx}
-
(1-{\Psig})[\Pa \Scalefx{\GPfunction}{\notx}]^{-1}
\right]
(\searchfxa{\GPfunction}{\notx}{\Pa}-\notx)+\zero{\norm{\searchfxa{\GPfunction}{\notx}{\Pa}-\notx}^2} \leq 0.
\end{array}
\end{equation}
By continuity of~$\hessianfunction{\GPfunction}$ and~$\Scalefx{\GPfunction}{\cdot}$ near~$\notxconv$, we find that~(\ref{equationefficiency}) is satisfied for $\Pa=1$ when~(\ref{conditionefficiency}) holds. That 
$\norm{\mapGTXfx{\Scale}{\genericset}{\GPfunction}{\notxk{k}}-\notxk{k}}\to 0$ completes the proof.
\QEDA
\end{proof}

}%

\subsection{Descent in polyhedral sets}\label{sectionpolyhedra}

%
Throughout the paper we consider the following problem.
\begin{problem}\label{initialproblem}
Solve
\begin{equation}
\min\nolimits_{\llamb\in\setoflambdas}{\smooth{\llamb}}
\end{equation}
where $\smoothfunction\in\setoffunctionsn{\MM}$, $\grad\function$ satisfies~(\ref{Lequation}) with Lipschitz constant~$\Lipschitz$,
and~$\setoflambdas$ is  the nonempty polyhedron 
$
\setoflambdas \doteq \{ \llamb \in\RMM \setst 
\affineaj{1}^\trsp\llamb \leq \affinebj{1}  ,
\,
\affineaj{2}^\trsp\llamb \leq \affinebj{2}  ,
...
,
\,
\affineaj{\affinen}^\trsp\llamb \leq \affinebj{\affinen}   
\}$,
with
$\affineaj{1},...,\affineaj{\affinen}\in\REAL{\MM}$
 and
$\affinebj{1},...,\affinebj{\affinen}\in\REALone$.
\obsolete{Minimize $\functionx{\llamb}=\smooth{\llamb}+
\characteristicx{\setoflambdas}{\llamb}
$, where  $\smoothfunction:\RMM\to\REALone$ is  convex, continuously differentiable and such that~$\grad\smoothfunction$ is Lispschitz continuous with Lipschitz  constant~$\lipschitz$, }
\obsolete{
Minimize a function $\smoothfunction\in\setoffunctionsn{\MM}$ such that~$\grad\function$ satisfies~(\ref{Lequation}) with Lipschitz constant~$\Lipschitz$ 
over the nonempty polyhedron 
$
\setoflambdas \doteq \{ \llamb \in\RMM \setst 
\affineaj{1}^\trsp\llamb \leq \affinebj{1}  ,
\,
\affineaj{2}^\trsp\llamb \leq \affinebj{2}  ,
...
,
\,
\affineaj{\affinen}^\trsp\llamb \leq \affinebj{\affinen}   
\}$,
where
$\affineaj{1},...,\affineaj{\affinen}\in\REAL{\MM}$
 and
$\affinebj{1},...,\affinebj{\affinen}\in\REALone$.
}%
\obsolete{
\begin{equation}\label{definitionpolyhedron}
\setoflambdas = \{ \llamb \in\RMM \setst 
\affineaj{1}^\trsp\llamb \leq \affinebj{1}  ,
...
,
\,
\affineaj{\affinen}^\trsp\llamb \leq \affinebj{\affinen}   
\}
,\hspace{3mm}
\affineaj{1},...,\affineaj{\affinen}\in\REAL{\MM}
,\ 
\affinebj{1},...,\affinebj{\affinen}\in\REALone
.
\end{equation}
}%
%
\obsolete{
rewrites as
\begin{equation}
\minimise{\llamb\in\setoflambdas} \smooth{\llamb}.
\end{equation}
}%
\end{problem}
\obsolete{
Since it considers a function of the type~(\ref{differentiablebox}) with~$\smoothfunction$  convex, $\Pfunction= \characteristic{\setoflambdas}$,  $\dom{\function}=\setoflambdas$, and $\nn=1$,
Problem~\ref{initialproblem} qualifies for Algorithm~\ref{scaledgradientprojectionmapping}.
Although the problem is first treated globally ($\nn=1$), the  results of this section extend naturally to the parallel optimization setting of Section~\ref{sectionparalleloptimization}.
}%
\obsolete{
We consider the following problem.
\begin{problem}\label{initialproblem}
\begin{equation}
\minimise{\llamb\in\setoflambdas} \smooth{\llamb}
\end{equation}
where~$\smoothfunction:\RMM\to\REALone$ is  convex, continuously differentiable and such that~$\grad\smoothfunction$ is Lispschitz continuous with Lipschitz  constant~$\lipschitz$, and~$\setoflambdas$ is the polyhedron 
\begin{equation}\label{definitionpolyhedron}
\setoflambdas = \{ \llamb \in\RMM \setst 
\affineaj{1}^\trsp\llamb \leq \affinebj{1}  ,
...
,
\,
\affineaj{\affinen}^\trsp\llamb \leq \affinebj{\affinen}   
\}
,\hspace{3mm}
\affineaj{1},...,\affineaj{\affinen}\in\REAL{\MM}
,\ 
\affinebj{1},...,\affinebj{\affinen}\in\REALone
.
\end{equation}
\obsolete{
Consider the minimization 
over the polyhedron 
\begin{equation}\label{definitionpolyhedron}
\setoflambdas = \{ \llamb \in\RMM \setst 
\affineaj{1}^\trsp\llamb \leq \affinebj{1}  ,
...
,
\,
\affineaj{\affinen}^\trsp\llamb \leq \affinebj{\affinen}   
\}
,\hspace{3mm}
\affineaj{1},...,\affineaj{\affinen}\in\REAL{\MM}
,\ 
\affinebj{1},...,\affinebj{\affinen}\in\REALone
\end{equation}
of a  
function~$\smoothfunction:\RMM\to\REALone$ convex, continuously differentiable and such that~$\grad\smoothfunction$ is Lipschitz continuous with Lipschitz  constant~$\lipschitz$.
}
\end{problem}
Problem~\ref{initialproblem} qualifies for Algorithm~\ref{scaledgradientprojectionmapping} since it is equivalent to minimizing the function $\functionx{\llamb}=\smooth{\llamb}+\Px{\llamb}$
 given by~(\ref{differentiablebox}) with~$\smoothfunction$  convex, $\Pfunction= \characteristic{\setoflambdas}$, where $\characteristicx{\setoflambdas}{\llamb}=0$ if $\llamb\in\setoflambdas$ and $\characteristicx{\setoflambdas}{\llamb}=+\infty$ otherwise, and thus $\dom{\function}=\setoflambdas$.
Although the problem is first treated globally ($\nn=1$), the  results of this section extend naturally to the parallel optimization setting of Section~\ref{sectionparalleloptimization}.
}%
\obsolete{
We minimize the function $\functionx{\llamb}=\smooth{\llamb}+\Px{\llamb}$
 given by~(\ref{differentiablebox}), 
in which we assume that~$\smoothfunction$ is convex and  set $\Pfunction = \characteristic{\setoflambdas}$,
where~$\setoflambdas$ is a polyhedron of the type
\begin{equation}\label{definitionpolyhedron}
\setoflambdas = \{ \llamb \in\RMM \setst 
\affineaj{1}^\trsp\llamb \leq \affinebj{1}  ,
...
,
\,
\affineaj{\affinen}^\trsp\llamb \leq \affinebj{\affinen}   
\}
,\hspace{3mm}
\affineaj{1},...,\affineaj{\affinen}\in\REAL{\MM}
,\ 
\affinebj{1},...,\affinebj{\affinen}\in\REALone
,
\end{equation}
and~$\characteristic{\setoflambdas}$ denotes the characteristic function of~$\setoflambdas$, i.e. $\characteristicx{\setoflambdas}{\llamb}=0$ if $\llamb\in\setoflambdas$ and $\characteristicx{\setoflambdas}{\llamb}=+\infty$ otherwise. This amounts to minimizing the convex, continuously differentiable function~$\smoothfunction$ on the polyhedron~$\setoflambdas$, and the problem is initially treated globally ($\nn=1$). The results of this section extend naturally to the parallel optimization setting of Section~\ref{sectionparalleloptimization}.
}
The affine constraint functions in Problem~\ref{initialproblem} can be rewritten  as $ \cstjx{j}{\llamb} \leq 0$, where $ \cstjx{j}{\llamb} \doteq \affineaj{j}^\trsp\llamb - \affinebj{j} $ and $\grad  \cstjx{j}{\llamb} = \affineaj{j} $ for~$\llamb\in\RMM$  ($j=1,...,\affinen$).
A constraint $ \cstjx{j}{\llamb}\leq0$ is said to be \emph{inactive} at a point~$\notx\in\RMM$ if $\cstjx{j}{\llamb}<0$, and \emph{active} if $\cstjx{j}{\llamb}=0$, in which case we write $j\in\activeaffinex{\notx}$, where $\activeaffinex{\notx}\subset\{1,...,\affinen\}$ denotes the index set of the active constraints  at~$\notx$.
If a constraint qualification holds for Problem~\ref{initialproblem} (e.g. Slater's condition~\cite{boyd04}), then
 the first-order optimality condition~(\ref{firstorderoptimalitycondition}) for a point~$\llambconv\in\setoflambdas$ translates, in accordance with the \emph{Karush-Kuhn-Tucker (KKT)} conditions, into the existence of nonnegative coefficients 
$\{\CLcoefj{j}\}_{j\in\activeaffinex{\llambconv}}$
satisfying
\begin{equation} \label{KKTsc}
\begin{array}{c}
\grad \smooth {\llambconv} = - \sum\nolimits_{j\in\activeaffinex{\notx}} \CLcoefj{j} \grad \cstjx{j}{\llambconv}.
\end{array}
\end{equation}
Frequently, a  solution~$\llambconv\in\setoflambdas$ of Problem~\ref{initialproblem} will meet the stronger condition that~(\ref{KKTsc}) holds for positive  coefficients
$\{\CLcoefj{j}\}_{j\in\activeaffinex{\llambconv}}$. 
In that case we say that \emph{strict complementarity} holds at~$\llambconv$---thus extending to polyhedra a notion discussed in~\cite{kelley99} in the context of bound-constrained optimization---, 
and it follows that~$\activeaffinex{\llambconv}$ is identified in finite time by the gradient projection algorithm.
%
%
\begin{proposition} [{Identification of the active  constraints}] \label{identificationofthesaturatednonnegativityconstraints}
Assume that Problem~\ref{initialproblem} admits a solution~$\llambconv$  where  strict complementarity holds.
Then one can find a~$ \ballradius>0$ such that $\activeaffinex{\mapGTXfx{\Scale}{\setoflambdas}{\function}{\llamb}}=\activeaffinex{\llambconv}$ for any $\llamb\in\setoflambdas$ satisfying $\norm{\llamb-\llambconv}<\ballradius$.
Moreover,  any sequence~$\{\llambk{k}\}$   generated by Algorithm~\ref{scaledgradientprojectionmapping} and converging  towards~$\llambconv$ is such that $\activeaffinex{\llambk{k}}\to\activeaffinex{\llambconv}$.
\obsolete{
Consider Problem~\ref{initialproblem} and let~$\{\llambk{k}\}$ be a sequence  generated by Algorithm~\ref{scaledgradientprojectionmapping} converging  to a solution~$\llambconv$  where  strict complementarity holds.
Then 
$\activeaffinex{\llambk{k}}\to\activeaffinex{\llambconv}$, and one can find a~$ \ballradius>0$ such that $\activeaffinex{\mapGTXfx{\Scale}{\setoflambdas}{\function}{\llamb}}=\activeaffinex{\llambconv}$ for any $\llamb\in\setoflambdas$ satisfying $\norm{\llamb-\llambconv}<\ballradius$.
}%
\end{proposition}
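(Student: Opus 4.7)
The plan is to read off the KKT multipliers of the projection subproblem~(\ref{globalequation}) at the base point $\llambconv$, where strict complementarity pins down an explicit sign, and then propagate this sign to nearby inputs by a continuity argument on the parametric QP so that complementary slackness fixes the active set.

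First, I would write the KKT system of the strictly convex subproblem~(\ref{globalequation}) in parametric form: there exist nonnegative multipliers $\mu_1,\dots,\mu_{\affinen}$, vanishing on inactive constraints, such that
\begin{equation*}
[\Pa\Scalefx{\function}{\notx}]^{-1}\bigl(\searchfxa{\function}{\notx}{\Pa}-\notx\bigr) + \grad\smooth{\notx} + \sum\nolimits_{j=1}^{\affinen}\mu_j\affineaj{j} = 0, \qquad \mu_j\,\cstjx{j}{\searchfxa{\function}{\notx}{\Pa}}=0.
\end{equation*}
Evaluating at $\notx=\llambconv$, stationarity forces $\searchfxa{\function}{\llambconv}{\Pa}=\llambconv$ and the first term drops out, so this system reduces to~(\ref{KKTsc}) with $\mu_j=\Pa\,\CLcoefj{j}$; strict complementarity yields $\mu_j>0$ for $j\in\activeaffinex{\llambconv}$ and $\mu_j=0$ otherwise.

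Next, since the step-size $\mapAfx{\function}{\notx}$ stays in $[\Pamin,1]$, the scaling $\Scalefx{\function}{\cdot}$ and $\grad\function$ are continuous, and the scaled projection onto the polyhedron $\setoflambdas$ is Lipschitz, the map $\notx\mapsto\mapGTXfx{\Scale}{\setoflambdas}{\function}{\notx}$ is continuous with fixed point $\llambconv$; hence every constraint $j\notin\activeaffinex{\llambconv}$ satisfies $\cstjx{j}{\llambconv}<0$ and retains a strict inequality at $\mapGTXfx{\Scale}{\setoflambdas}{\function}{\llamb}$ for every $\llamb$ in a sufficiently small ball. The hard part will be ruling out deactivation of the constraints $j\in\activeaffinex{\llambconv}$: for this I would invoke upper semicontinuity of the multiplier correspondence of the parametric strictly convex QP above, producing a selection of multipliers that remains close to $\{\Pa\,\CLcoefj{j}\}_j$ on a neighbourhood of $\llambconv$, so that $\mu_j>0$ persists for active indices and complementary slackness forces $\cstjx{j}{\mapGTXfx{\Scale}{\setoflambdas}{\function}{\llamb}}=0$. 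The technical subtlety is that the multipliers need not be unique when the normals $\{\affineaj{j}\}_{j\in\activeaffinex{\llambconv}}$ are linearly dependent; I would bypass this by restricting the analysis to the affine hull of the face indexed by $\activeaffinex{\llambconv}$ and by exploiting the piecewise-linear nature of polyhedral projection, whose active face is stable whenever the residual $\notz-\llambconv$ lies in the relative interior of the normal cone at $\llambconv$---which is exactly the content of strict complementarity. Taking $\ballradius$ to be the smaller of the two resulting radii gives the local claim.

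The sequence statement then follows at once: convergence $\llambk{k}\to\llambconv$ places the iterates inside the ball of radius $\ballradius$ from some index on, so $\activeaffinex{\llambk{k+1}}=\activeaffinex{\mapGTXfx{\Scale}{\setoflambdas}{\function}{\llambk{k}}}=\activeaffinex{\llambconv}$ eventually, which is precisely the convention $\activeaffinex{\llambk{k}}\to\activeaffinex{\llambconv}$ introduced in the Notation.
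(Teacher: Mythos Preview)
Your argument is correct and complete in outline, but it follows a genuinely different route from the paper's proof. You lean on the stability of the KKT multipliers (equivalently, on the fact that strict complementarity places the scaled residual $-\Pa\,\grad\smooth{\llambconv}$ in the relative interior of the normal cone $N_{\setoflambdas}(\llambconv)$, so the projection onto a polyhedron lands on the same face under perturbation). The paper instead proceeds by a direct geometric contradiction: it first shows, using~(\ref{KKTsc}) and the affine structure of the constraints, that $\grad\smooth{\llambconv}$ cannot lie in the linear span of $\{\affineaj{j}\}_{j\in\activeaffinespecial}$ for any proper subset $\activeaffinespecial\subsetneq\activeaffinex{\llambconv}$, hence sits at positive distance $\minimumdist>0$ from every such span; then, writing the KKT condition of the projection subproblem at a nearby $\llamb$ and bounding $\|\grad\smooth{\llamb}-\grad\smooth{\llambconv}\|$ and $\|\mmu-\llamb\|$ by $O(\ballradius)$, it obtains an element of one of these spans within $O(\ballradius)$ of $\grad\smooth{\llambconv}$, contradicting the gap $\minimumdist$ once $\ballradius$ is small. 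The paper's argument is more elementary in that it never invokes set-valued continuity of multipliers and sidesteps the non-uniqueness issue you flag by working with distances to linear spans rather than with individual coefficients; your approach, on the other hand, is the more standard sensitivity-analysis viewpoint and makes transparent why strict complementarity is exactly the right hypothesis (relative-interior membership in the normal cone).
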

The proof 
is given in the Appendix.
%
%
%
%
%
\label{referencereducedspace}
A consequence of Proposition~\ref{identificationofthesaturatednonnegativityconstraints} is that, under strict stationarity at a point of convergence~$\llambconv$, local convergence occurs in a subspace $
\{\llamb\in\RMM\setst \affineaj{j}^\trsp\notx=\affinebj{j},\ j\in\activeaffinex{\llambconv}\}
$  with dimension $\tildeMM\leq\MM$, called the \emph{reduced space} at~$\llambconv$, and orthogonal to  the normal vectors of all the active constraints  at~$\llambconv$. 
By~$\matrixnonviolatingx{\llambconv}$ we denote any matrix whose columns  form an orthonormal basis of the reduced space at~$\llambconv$.
For any $\llamb \in \setoflambdas$ such that $\activeaffinex{\llamb} = \activeaffinex{\llambconv}$, there is a unique vector~$\LClamb \in \REAL{\tildeMM}$ satisfying $\llamb = \llambconv + \matrixnonviolatingx{\llambconv} \LClamb $.
%
%
%
%
%
\obsolete{%
The following result states that the gradient projections reduce to mere gradient descents in the vicinity of~$\llambconv$.
\begin{proposition} [Descent in the reduced space] \label{unconstraineddescent}
Consider Problem~\ref{initialproblem} and let~$\{\llambk{k}\}$ be a sequence  generated by Algorithm~\ref{scaledgradientprojectionmapping} which converges  to a solution~$\llambconv$  where  strict complementarity holds.
One can find a~$\ksc<\infty$ such that, for~$k>\ksc$, one has 
$\llambk{k} = \llambconv + \matrixnonviolatingx{\llambconv} \LClambk{k}  $ and $\llambk{k+1} = \llambconv + \matrixnonviolatingx{\llambconv} \LClambk{k+1}  $ for some
$  \LClambk{k}, \LClambk{k+1} \in \REAL{\tildeMM} $ satisfying
\begin{equation} \label{unconstrainedequationnew}
\LClambk{k+1} = 
\LClambk{k} - \newPa{\Paokk{k}}{\mapAfx{\function}{\llambk{k}}} \LCScalefx{\function}{\llambk{k}} \gradfxtilde{\llambk{k}}
,
\end{equation}
where 
 $\gradftilde \doteq \matrixnonviolatingx{\llambconv}^\trsp \gradftilde $,
$\LCScalefx{\function}{\llambk{k}} \doteq \lbrack \matrixnonviolatingx{\llambconv}^\trsp \Scalefx{\function}{\llambk{k}}^{-1}   \matrixnonviolatingx{\llambconv} \rbrack^{-1} $,
and~$\newPa{\Paokk{k}}{\mapAfx{\function}{\llambk{k}}}$ is the step size at step~$k$.
%
\end{proposition}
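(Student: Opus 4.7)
The plan is to leverage Proposition 2 (active-set identification) to reduce the constrained subproblem solved at each iteration to an unconstrained quadratic on a single face of $\setoflambdas$, and then read off the iteration in reduced coordinates from the KKT conditions.

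First, I would invoke Proposition~\ref{identificationofthesaturatednonnegativityconstraints} twice. Since $\{\llambk{k}\}$ converges to $\llambconv$ and $\mapGTXfx{\Scale}{\setoflambdas}{\function}{\cdot}$ preserves the active set in a neighbourhood of $\llambconv$, there exists $\ksc<\infty$ such that $\activeaffinex{\llambk{k}}=\activeaffinex{\llambk{k+1}}=\activeaffinex{\llambconv}$ for every $k>\ksc$. In particular, for such $k$ both iterates lie in the face $F\doteq\{\llamb\in\setoflambdas:\affineaj{j}^\trsp\llamb=\affinebj{j},\ j\in\activeaffinex{\llambconv}\}$, and no further constraint is active at either point. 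By definition of $\matrixnonviolatingx{\llambconv}$ as an orthonormal basis of the reduced space, $\llambk{k}-\llambconv$ and $\llambk{k+1}-\llambconv$ admit unique representations $\matrixnonviolatingx{\llambconv}\LClambk{k}$ and $\matrixnonviolatingx{\llambconv}\LClambk{k+1}$ with $\LClambk{k},\LClambk{k+1}\in\REAL{\tildeMM}$.

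Next, I would write the KKT conditions for the strictly convex quadratic subproblem~(\ref{globalequation}) with step size $\Paokk{k}$ and minimiser $\llambk{k+1}$. Because only the constraints indexed by $\activeaffinex{\llambconv}$ are active at $\llambk{k+1}$, there exist multipliers $\CLcoefj{j}\geq 0$, $j\in\activeaffinex{\llambconv}$, such that
\begin{equation*}
\grad\function(\llambk{k})+[\Paokk{k}\Scalefx{\function}{\llambk{k}}]^{-1}(\llambk{k+1}-\llambk{k})+\sum\nolimits_{j\in\activeaffinex{\llambconv}}\CLcoefj{j}\affineaj{j}=0.
\end{equation*}
Left-multiplying by $\matrixnonviolatingx{\llambconv}^\trsp$ annihilates the multiplier sum because $\matrixnonviolatingx{\llambconv}^\trsp\affineaj{j}=0$ for every $j\in\activeaffinex{\llambconv}$; substituting $\llambk{k+1}-\llambk{k}=\matrixnonviolatingx{\llambconv}(\LClambk{k+1}-\LClambk{k})$ then gives
\begin{equation*}
\matrixnonviolatingx{\llambconv}^\trsp\grad\function(\llambk{k})+\tfrac{1}{\Paokk{k}}\matrixnonviolatingx{\llambconv}^\trsp\Scalefx{\function}{\llambk{k}}^{-1}\matrixnonviolatingx{\llambconv}(\LClambk{k+1}-\LClambk{k})=0.
\end{equation*}
Since $\Scalefx{\function}{\llambk{k}}\in\setofScales$ has eigenvalues bounded in $[\eigentbound,\eigenTbound]$, $\Scalefx{\function}{\llambk{k}}^{-1}$ is symmetric positive definite, and $\matrixnonviolatingx{\llambconv}$ has full column rank, so the reduced scaling $\matrixnonviolatingx{\llambconv}^\trsp\Scalefx{\function}{\llambk{k}}^{-1}\matrixnonviolatingx{\llambconv}$ is invertible; multiplying by $\LCScalefx{\function}{\llambk{k}}=[\matrixnonviolatingx{\llambconv}^\trsp\Scalefx{\function}{\llambk{k}}^{-1}\matrixnonviolatingx{\llambconv}]^{-1}$ and recognising $\gradfxtilde{\llambk{k}}=\matrixnonviolatingx{\llambconv}^\trsp\grad\function(\llambk{k})$ yields~(\ref{unconstrainedequationnew}).

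The principal subtlety I expect is justifying that the constrained KKT problem collapses to a linear system with only the $\activeaffinex{\llambconv}$-multipliers. This requires not only that $\llambk{k+1}\in F$, but also that no constraint outside $\activeaffinex{\llambconv}$ is active at $\llambk{k+1}$, which is precisely the equality (not mere inclusion) of active sets supplied by Proposition~\ref{identificationofthesaturatednonnegativityconstraints}, itself resting on strict complementarity at $\llambconv$. Once this finite-identification fact is in hand, the algebraic manipulation through $\matrixnonviolatingx{\llambconv}^\trsp$ is routine, and Proposition~\ref{propositionefficiency} (if needed) ensures the step sizes $\Paokk{k}$ remain well-defined throughout.
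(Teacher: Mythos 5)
Your proof is correct, and it reaches~(\ref{unconstrainedequationnew}) by a route that differs from the paper's in its decisive step. Both arguments open identically: Proposition~\ref{identificationofthesaturatednonnegativityconstraints} fixes the active set at $\activeaffinex{\llambconv}$ for $\llambk{k}$ and $\llambk{k+1}$ once $k$ is large, and the orthogonality of the columns of $\matrixnonviolatingx{\llambconv}$ to the active normals gives the reduced representations $\LClambk{k},\LClambk{k+1}$. From there the paper rewrites the subproblem~(\ref{globalequation}) itself in reduced coordinates, notes that the reduced feasible region is an open subset of $\REAL{\tildeMM}$ containing the minimizer, and concludes via the projection interpretation~(\ref{equationprojection}) that the constrained minimizer must be the unconstrained stationary point of the reduced quadratic. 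You instead keep the subproblem in the original coordinates, write its KKT stationarity condition at $\llambk{k+1}$ (valid without any further qualification since the constraints are affine, with complementary slackness restricting the multiplier sum to $j\in\activeaffinex{\llambconv}$ precisely because the identification result gives equality, not mere inclusion, of active sets), and annihilate the multiplier terms by left-multiplying with $\matrixnonviolatingx{\llambconv}^\trsp$. The two mechanisms are equivalent in substance --- your multiplier cancellation is the algebraic counterpart of the paper's openness argument --- but yours dispenses with the need to argue that the reduced feasible set is open, while the paper's dispenses with multipliers for the subproblem altogether (it reserves the multiplier representation, cf.~(\ref{geometricinterpretationofprojection}), for the proof of Proposition~\ref{identificationofthesaturatednonnegativityconstraints}). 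Either way the final inversion of $\matrixnonviolatingx{\llambconv}^\trsp\Scalefx{\function}{\llambk{k}}^{-1}\matrixnonviolatingx{\llambconv}$ is justified exactly as you say, by positive definiteness of the scaling and full column rank of $\matrixnonviolatingx{\llambconv}$.
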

\begin{proof}
%
Since the proposition is trivial when $\tildeMM=0$,
we suppose that $\tildeMM>0$.
%
Let
$\manifoldsetoflambdas \doteq \{ \llamb \in \setoflambdas \setst \setviolatingx{\llamb} = \setviolatingx{\llambconv} \} $,
and
$\LCsetoflambdas \doteq \{ \matrixnonviolatingx{\llambconv}^\trsp ( \llamb - \llambconv ) \setst \llamb \in \manifoldsetoflambdas \}$,
which is an open subset of $ \REAL{\tildeMM}$ by  continuity of the constraint functions.
%
Moreover, the function
$\bijectionfx{\LClamb} \doteq \llambconv + \matrixnonviolatingx{\llambconv} \LClamb$ 
is a bijection between~$\LCsetoflambdas$ and~$\manifoldsetoflambdas$, i.e.  $\manifoldsetoflambdas = \{ \bijectionfx{\LCmu} \setst \LCmu \in \LCsetoflambdas \} $.
It follows from Proposition~\ref{identificationofthesaturatednonnegativityconstraints}
that one can find a $\ksc<\infty$ such that, for all $k\geq\ksc$, 
we have
$\llambk{k}, \llambk{k+1} \in \manifoldsetoflambdas$,
and thus
$\llambk{k} =  \bijectionfx{\LClambk{k}}  $ and $\llambk{k+1} = \bijectionfx{\LClambk{k+1}}  $ for some points $  \LClambk{k}, \LClambk{k+1} \in \LCsetoflambdas $.
Based on 
$\llambk{k+1}=\mapGTXfx{\Scale}{\genericset}{\function}{\llambk{k}}$
with step size~$\Pa$ 
and~(\ref{globalequation}), it is straightforward to show that
%
\begin{align} 
\LClambk{k+1} 
 & 
\begin{array}{c}
=
\arg \min\nolimits_{\LCmu \in \LCsetoflambdas} \left\{  \gradfxtilde{\llambk{k}}^\trsp ({\LCmu} - {\LClambk{k} } )  
+ \frac{1 }{2}  
\specialnorm{ {\LCmu} - {\LClambk{k} } }{ [
\newPa{\Paokk{k}}{\mapAfx{\function}{\llambk{k}}}
 \LCScalefx{\function}{\llambk{k}} ]^{-1}}^2 \right\} 
\end{array}
\notag
\\    & =  \LClambk{k}  - 
\newPa{\Paokk{k}}{\mapAfx{\function}{\llambk{k} }} \LCScalefx{\function}{\llambk{k}} 
 \gradfxtilde{\llambk{k}}
\label{manifoldthree}
\end{align}
%
where~(\ref{manifoldthree}) follows from~(\ref{equationprojection}), and from the fact  that~$\LCsetoflambdas$ is 
an open set containing~$\LClambk{k+1}$, thus~$\LClambk{k+1}$ is the projection  on~$\LCsetoflambdas$ of only one point: itself.
%
%
\QEDA\end{proof}

---
}%
The following result states that the gradient projections reduce to mere gradient descents in the vicinity of~$\llambconv$ and derives asymptotics for~$\mapGTXfunction{\Scale}{\genericset}$.
We denote by~$\tildeidentity$ the identity matrix in~$\REAL{\tildeMM\times\tildeMM}$.
\begin{proposition} [Descent in the reduced space] \label{unconstraineddescent}
Let~$\llambconv$  be a solution of Problem~\ref{initialproblem} where strict complementarity holds, and consider  Algorithm~\ref{scaledgradientprojectionmapping}.
Any vectors $\llamb,\mmu\in\setoflambdas$ such that $\mmu=\mapGTXfx{\Scale}{\setoflambdas}{\function}{\llamb}$ with step size~$\newPa{\Paok}{\mapAfx{\function}{\llamb}}$,
and $  \LClamb, \LCmu \in \REAL{\tildeMM} $
such that
 $\llamb = \llambconv + \matrixnonviolatingx{\llambconv} \LClamb  $ and $\mmu = \llambconv + \matrixnonviolatingx{\llambconv} \LCmu  $,
satisfy
\begin{equation} \label{unconstrainedequationnew}
\LCmu = 
\LClamb - \newPa{\Paokk{k}}{\mapAfx{\function}{\llambk{k}}} \LCScalefx{\function}{\llamb} \gradfxtilde{\llamb}
,
\end{equation}
where
 $\gradftilde \doteq \matrixnonviolatingx{\llambconv}^\trsp \gradftilde $ and
$\LCScalefx{\function}{\llamb} \doteq \lbrack \matrixnonviolatingx{\llambconv}^\trsp \Scalefx{\function}{\llamb}^{-1}   \matrixnonviolatingx{\llambconv} \rbrack^{-1} $.

Further, if~$\function$ is twice continuously differentiable and~$\Scalefx{\function}{\cdot}$ is continuous  at~$\llambconv$,
%
%
then
\begin{equation} \label{globalrate}
\LCmu = 
\left[
\tildeidentity - \newPa{\Paok}{\mapAfx{\function}{\llambk{k}}} \LCScalefx{\function}{\llamb} \hessiantilde{\function}{\llamb}
\right]
\LClamb
+
\fullremainder{\LClamb}
,
\end{equation}
where $\hessiantilde{\smoothfunction}{\llambconv}\doteq \matrixnonviolatingx{\llambconv}^\trsp   \hessian{\smoothfunction}{\llambconv} \matrixnonviolatingx{\llambconv}$
and
$\fullremainder{\LClamb}=\zero{\norm{\LClamb}}$.
%
If~$\function$ is smooth and~$\Scalefx{\function}{\cdot}$ is continuously differentiable at~$\llambconv$, then the remainder rewrites as $\fullremainder{\LClamb}=\remainder{\LClamb} (\LClamb-\LCaccu)(\LClamb-\LCaccu)^\trsp$, where~$\remainder{\LClamb}$ is a function of the derivatives at~$\LClamb$ of~$\hessiantildefunction{\function}$ and~$\LCScalefx{\function}{\cdot}$ uniformly bounded in a neighborhood of~$0$.
\end{proposition}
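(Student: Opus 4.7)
The plan is to localise the analysis via Proposition~\ref{identificationofthesaturatednonnegativityconstraints}, recast the gradient-projection subproblem in reduced coordinates, and Taylor-expand around $\llambconv$. By Proposition~\ref{identificationofthesaturatednonnegativityconstraints}, for $\llamb$ close enough to $\llambconv$ both $\llamb$ and its image $\mmu=\mapGTXfx{\Scale}{\setoflambdas}{\function}{\llamb}$ share the active set $\activeaffinex{\llambconv}$, and therefore belong to the affine manifold $\mathcal{M}\doteq\{\nu\in\setoflambdas:\activeaffinex{\nu}=\activeaffinex{\llambconv}\}$. Since the columns of $\matrixnonviolatingx{\llambconv}$ span the orthogonal complement of $\{\affineaj{j}:j\in\activeaffinex{\llambconv}\}$, the map $\tilde\nu\mapsto\llambconv+\matrixnonviolatingx{\llambconv}\tilde\nu$ is an affine bijection between an open subset $\tilde{\mathcal{M}}\subset\REAL{\tildeMM}$ and $\mathcal{M}$; openness follows from strict satisfaction of the inactive constraints in a neighbourhood of $\llambconv$.

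Next, I would substitute $y=\llambconv+\matrixnonviolatingx{\llambconv}\tilde\nu$ into the quadratic minimised in (\ref{globalequation}) with $\notx=\llamb$ and step size $\alpha$. Orthonormality of the columns of $\matrixnonviolatingx{\llambconv}$ yields $\grad\function(\llamb)^{\trsp}(y-\llamb)=\gradftilde(\llamb)^{\trsp}(\tilde\nu-\LClamb)$ together with $\|y-\llamb\|_{[\alpha\Scalefx{\function}{\llamb}]^{-1}}^{2}=\|\tilde\nu-\LClamb\|_{[\alpha\LCScalefx{\function}{\llamb}]^{-1}}^{2}$. Because the minimiser lies in the interior of $\tilde{\mathcal{M}}$, minimising over $\tilde{\mathcal{M}}$ reduces to unconstrained minimisation of the reduced quadratic, whose first-order optimality condition at $\LCmu$ reads exactly as (\ref{unconstrainedequationnew}).

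To derive (\ref{globalrate}) I would Taylor-expand $\gradftilde$ about $\llambconv$. Combining the KKT condition (\ref{KKTsc}) with $\matrixnonviolatingx{\llambconv}^{\trsp}\affineaj{j}=0$ for $j\in\activeaffinex{\llambconv}$ gives $\gradftilde(\llambconv)=\matrixnonviolatingx{\llambconv}^{\trsp}\grad\function(\llambconv)=0$, so the integral form of Taylor's theorem yields $\gradftilde(\llamb)=\int_{0}^{1}\hessiantilde{\function}{\llambconv+t\matrixnonviolatingx{\llambconv}\LClamb}\,\LClamb\,dt$. By continuity of the Hessian at $\llambconv$ this integral equals $\hessiantilde{\function}{\llamb}\LClamb+o(\|\LClamb\|)$; substituting into (\ref{unconstrainedequationnew}) and using continuity of $\LCScalefx{\function}{\cdot}$ at $\llambconv$ (hence its local boundedness) produces (\ref{globalrate}) with $\fullremainder{\LClamb}=o(\|\LClamb\|)$.

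For the smooth case with continuously differentiable scaling, the plan is to push each expansion one order further. A Hadamard-type integral remainder for $\hessiantilde{\function}{\llambconv+t\matrixnonviolatingx{\llambconv}\LClamb}-\hessiantilde{\function}{\llamb}$ and a first-order Taylor remainder for $\LCScalefx{\function}{\llamb}-\LCScalefx{\function}{\llambconv}$ each contribute a factor linear in $\LClamb$ with coefficients controlled by the relevant derivatives. Collecting the resulting products in (\ref{unconstrainedequationnew}) allows the aggregate error to be written as $\remainder{\LClamb}(\LClamb-\LCaccu)(\LClamb-\LCaccu)^{\trsp}$, with $\LCaccu$ denoting the reduced coordinate of $\llambconv$ (i.e.\ $\LCaccu=0$) and $\remainder{\LClamb}$ depending continuously on the local derivatives of $\hessiantildefunction{\function}$ and $\LCScalefx{\function}{\cdot}$, hence uniformly bounded near $\LCaccu$. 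I expect this last packaging step to be the main technical obstacle: certifying that the combined second-order gradient error and first-order scaling perturbation assemble into a single bilinear-in-$\LClamb$ expression with a uniformly bounded coefficient, which I would handle via local Lipschitz bounds on $\hessiantildefunction{\function}$ and $\LCScalefx{\function}{\cdot}$ on a compact neighbourhood of $\llambconv$.
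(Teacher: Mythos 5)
Your proposal is correct and follows essentially the same route as the paper: pass to the open reduced set $\LCsetoflambdas$ via the affine bijection $\tilde\nu\mapsto\llambconv+\matrixnonviolatingx{\llambconv}\tilde\nu$, observe that the minimizer of the reduced quadratic lies in an open set so the first-order condition is unconstrained (yielding~(\ref{unconstrainedequationnew})), and then obtain~(\ref{globalrate}) and the remainder form by Taylor expansion of the displacement at~$\llambconv$. The only cosmetic difference is that you invoke Proposition~\ref{identificationofthesaturatednonnegativityconstraints} to localize near~$\llambconv$, whereas the statement already hypothesizes that $\llamb$ and $\mmu$ lie in the reduced affine set; your argument does not actually need that localization, and your write-up of the Taylor steps is in fact more explicit than the paper's.
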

\begin{proof}
Since the proposition is trivial when $\tildeMM=0$,
we suppose that $\tildeMM>0$.
Let
$\manifoldsetoflambdas \doteq \{ \llamb \in \setoflambdas \setst \setviolatingx{\llamb} = \setviolatingx{\llambconv} \} $,
and
$\LCsetoflambdas \doteq \{ \matrixnonviolatingx{\llambconv}^\trsp ( \llamb - \llambconv ) \setst \llamb \in \manifoldsetoflambdas \}$,
which is an open subset of $ \REAL{\tildeMM}$ by  continuity of the constraint functions.
Moreover, the function
$\bijectionfx{\LClamb} \doteq \llambconv + \matrixnonviolatingx{\llambconv} \LClamb$ 
is a bijection between~$\LCsetoflambdas$ and~$\manifoldsetoflambdas$, i.e.  $\manifoldsetoflambdas \equiv \{ \bijectionfx{\LCmu} \setst \LCmu \in \LCsetoflambdas \} $.
%
%
It follows 
 the assumptions on~$\llamb$ and~$\mmu$ that
\begin{align} 
\LCmu 
 & 
\refeq{(\ref{globalequation})}{=} 
\begin{array}{c}
\arg \min\nolimits_{\xi \in \LCsetoflambdas} \left\{  \gradfxtilde{\llamb}^\trsp ({\xi} - {\LClamb } )  
+ \frac{1 }{2}  
\specialnorm{ {\xi} - {\LClamb } }{ [
\newPa{\Paok}{\mapAfx{\function}{\llambk{k}}}
 \LCScalefx{\function}{\llamb} ]^{-1}}^2 \right\} 
\end{array}
\\    & 
\refeq{(\ref{equationprojection})}{=} 
   \LClamb - 
\newPa{\Paok}{\mapAfx{\function}{\llambk{k}}} \LCScalefx{\function}{\llamb} 
 \gradfxtilde{\llamb}
\label{manifoldthree}
\end{align}
where~(\ref{manifoldthree}) follows 
 from the fact  that~$\LCsetoflambdas$ is  an open set containing~$\LCmu$, thus~$\LCmu$ is the projection  on~$\LCsetoflambdas$ of only one point: itself.

The remaining statements follow directly  from~(\ref{unconstrainedequationnew}) and Taylor's theorem, by linear approximation at~$ \llambconv$ of the displacement $\displacement{\lamb}\doteq\mapGTXfx{\Scale}{\setoflambdas}{\function}{\llamb} - \llamb$.
\QEDA\end{proof}

\obsolete{
-----

 \begin{corollary}[Asymptotics of~$\mapGTXfunction{\Scale}{\genericset}$]\label{globalasymptotics}
\obsolete{
Let~$\llambconv$  be a solution of Problem~\ref{initialproblem} where~$\function$ is twice continuously differentiable and strict complementarity holds.
If a sequence~$\{\llambk{k}\}$  generated by Algorithm~\ref{scaledgradientprojectionmapping} with scaling~$\Scalefx{\function}{\cdot}$  continuous at~$\llamb$
converges towards~$\llambconv$, then
for large~$k$ one has 
$\llambk{k} = \llambconv + \matrixnonviolatingx{\llambconv} \LClambk{k}  $ and $\llambk{k+1} = \llambconv + \matrixnonviolatingx{\llambconv} \LClambk{k+1}  $ for some
$  \LClambk{k}, \LClambk{k+1} \in \REAL{\tildeMM} $, and 
\begin{equation} \label{globalrate}
\LClambk{k+1} = 
[\tildeidentity - \newPa{\Paokk{k}}{\mapAfx{\function}{\llambk{k}}} \LCScalefx{\function}{\llambk{k}} \hessiantilde{\function}{\llambk{k}}]
\LClambk{k}
+
\remainder{\llambconv} (\LClambk{k}-\LCaccuk{k})(\LClambk{k}-\LCaccuk{k})^\trsp
,
\end{equation}
where~$\newPa{\Paokk{k}}{\mapAfx{\function}{\llambk{k}}}$ is the step size at step~$k$, $\hessiantilde{\smoothfunction}{\llambconv}\doteq \matrixnonviolatingx{\llambconv}^\trsp   \hessian{\smoothfunction}{\llambconv} \matrixnonviolatingx{\llambconv}$, and he remainder~$\remainder{\llambconv}$ is finite.
}%
Let~$\llambconv$  be a solution of Problem~\ref{initialproblem} where~$\function$ is twice continuously differentiable and strict complementarity holds. Consider  Algorithm~\ref{scaledgradientprojectionmapping} with scaling~$\Scalefx{\function}{\cdot}$  continuous  at~$\llambconv$.
%
Then any vectors $\llamb,\mmu\in\setoflambdas$ such that $\mmu=\mapGTXfx{\Scale}{\setoflambdas}{\function}{\llamb}$ with step size~$\newPa{\Paok}{\mapAfx{\function}{\llamb}}$ 
and $\llamb = \llambconv + \matrixnonviolatingx{\llambconv} \LClamb  $, $\mmu = \llambconv + \matrixnonviolatingx{\llambconv} \LCmu  $ 
for some
$  \LClamb, \LCmu \in \REAL{\tildeMM} $,
satisfy
\begin{equation} \label{globalrate}
\LCmu = 
[\tildeidentity - \newPa{\Paok}{\mapAfx{\function}{\llambk{k}}} \LCScalefx{\function}{\llamb} \hessiantilde{\function}{\llamb}]
\LClamb
+
\fullremainder{\LClamb}
,
\end{equation}
where $\hessiantilde{\smoothfunction}{\llambconv}\doteq \matrixnonviolatingx{\llambconv}^\trsp   \hessian{\smoothfunction}{\llambconv} \matrixnonviolatingx{\llambconv}$
and
$\fullremainder{\LClamb}=\zero{\norm{\LClamb}^2}$.
%
Further, if~$\function$ is smooth and~$\Scalefx{\function}{\cdot}$  continuously differentiable at~$\llamb$, then the remainder rewrites as $\fullremainder{\LClamb}=\remainder{\LClamb} (\LClamb-\LCaccu)(\LClamb-\LCaccu)^\trsp$, where~$\remainder{\LClamb}$ is a function of the derivatives at~$\LClamb$ of~$\hessiantildefunction{\function}$ and~$\LCScalefx{\function}{\cdot}$ and uniformly bounded in a neighborhood of~$0$.
\end{corollary}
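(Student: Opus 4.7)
The plan is to parametrize locally the face of $\setoflambdas$ containing $\llambconv$ by the columns of $\matrixnonviolatingx{\llambconv}$, rewrite the quadratic subproblem (\ref{globalequation}) defining $\mapGTXfunction{\Scale}{\setoflambdas}$ as an unconstrained quadratic in $\REAL{\tildeMM}$, and finally Taylor-expand the resulting iteration around $\llambconv$ using the fact that the reduced gradient vanishes at $\llambconv$.

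I would first dispose of the trivial case $\tildeMM=0$. For $\tildeMM>0$, introduce the relative interior of the face, $M\doteq\{\llamb\in\setoflambdas:\activeaffinex{\llamb}=\activeaffinex{\llambconv}\}$, together with its coordinate image $\tilde M\doteq\matrixnonviolatingx{\llambconv}^\trsp(M-\llambconv)$. Continuity of the (still inactive) affine constraints at $\llambconv$ makes $\tilde M$ an open subset of $\REAL{\tildeMM}$, and $\xi\mapsto\llambconv+\matrixnonviolatingx{\llambconv}\xi$ is a bijection $\tilde M\to M$. Either Proposition~\ref{identificationofthesaturatednonnegativityconstraints} or the decomposition hypothesis itself places both $\llamb$ and $\mmu$ in $M$, so that $\LClamb,\LCmu\in\tilde M$.

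I would then rewrite the minimization defining $\mmu$ in (\ref{globalequation}) by substituting $\noty=\llambconv+\matrixnonviolatingx{\llambconv}\xi$; using $\noty-\llamb=\matrixnonviolatingx{\llambconv}(\xi-\LClamb)$ together with $\matrixnonviolatingx{\llambconv}^\trsp\matrixnonviolatingx{\llambconv}=\tildeidentity$, the linear term becomes $\gradfxtilde{\llamb}^\trsp(\xi-\LClamb)$ and the quadratic one $\frac{1}{2}\specialnorm{\xi-\LClamb}{[\Paok\LCScalefx{\function}{\llamb}]^{-1}}^2$, the latter relying on the algebraic identity $\LCScalefx{\function}{\llamb}^{-1}=\matrixnonviolatingx{\llambconv}^\trsp\Scalefx{\function}{\llamb}^{-1}\matrixnonviolatingx{\llambconv}$ which is precisely the definition of $\LCScalefx{\function}{\llamb}$. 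Because $\tilde M$ is an open neighborhood of $\LCmu$, the constrained minimum over $\tilde M$ coincides with the unconstrained minimum over $\REAL{\tildeMM}$, yielding the closed form (\ref{unconstrainedequationnew}).

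For the asymptotic expansions, the key observation is that the KKT condition (\ref{KKTsc}), together with the orthogonality of $\matrixnonviolatingx{\llambconv}$ to the active normals $\{\affineaj{j}\}_{j\in\activeaffinex{\llambconv}}$, forces $\gradfxtilde{\llambconv}=\matrixnonviolatingx{\llambconv}^\trsp\grad\smooth{\llambconv}=0$. Under twice continuous differentiability of $\function$ and continuity of $\Scalefx{\function}{\cdot}$ at $\llambconv$, a first-order Taylor expansion of $\gradftilde$ around $\llambconv$ yields $\gradfxtilde{\llamb}=\hessiantilde{\function}{\llamb}\LClamb+\zero{\norm{\LClamb}}$, and substitution in (\ref{unconstrainedequationnew}) produces (\ref{globalrate}). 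Under the stronger smoothness hypothesis on $\function$ and continuous differentiability of $\Scalefx{\function}{\cdot}$, second-order Taylor expansion of $\gradftilde$ combined with the first-order expansion of $\LCScalefx{\function}{\cdot}$ yields the outer-product remainder form with uniformly bounded $\remainder{\LClamb}$. I expect the main obstacle to be bookkeeping rather than conceptual: cleanly verifying the scaled-norm reduction identity, and carefully separating the remainders produced by the Taylor expansions of $\hessiantildefunction{\function}$ and $\LCScalefx{\function}{\cdot}$ as one passes between evaluations at $\llamb$ and at $\llambconv$.
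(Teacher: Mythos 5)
Your proposal is correct and follows essentially the same route as the paper: reduce to the open coordinate image of the active face via $\matrixnonviolatingx{\llambconv}$, observe that the projection step becomes an unconstrained quadratic minimization there (giving~(\ref{unconstrainedequationnew})), and then Taylor-expand the displacement at~$\llambconv$ using $\matrixnonviolatingx{\llambconv}^\trsp\grad\smooth{\llambconv}=0$ from~(\ref{KKTsc}). Note only that under mere twice continuous differentiability your (correct) expansion yields a remainder $\zero{\norm{\LClamb}}$ rather than the $\zero{\norm{\LClamb}^2}$ written in this version of the statement, which matches the bound the paper itself adopts in Proposition~\ref{unconstraineddescent}.
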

\begin{proof}
This result follows immediately  from~(\ref{unconstrainedequationnew}) and Taylor's theorem by linear approximation at~$ \llambconv$ of the displacement $\mapGTXfx{\Scale}{\setoflambdas}{\function}{\llamb} - \llamb$.
\QEDA\end{proof}

}%

\subsection{Parallel analysis and coordinate descent}\label{sectionparalleloptimization}

This section considers parallel implementations of Algorithm~\ref{scaledgradientprojectionmapping} for Problem~\ref{initialproblem}, where assumption is made that~$\setoflambdas$ is a Cartesian product set.
\begin{assumption}[Parallel analysis]\label{assumptionparallel}
The feasible set of Problem~\ref{initialproblem} is given by $\setoflambdas=\setoflambdasi{1}\times...\times\setoflambdasi{\nn}$,
where each~$\setoflambdasi{i}$ is a polyhedron in~$\RMMi{i}$ and $\MMi{1}+...+\MMi{\nn}=\MM$.
The Lipschitz continuity of~$\grad\function$ is considered  coordinate-wise and~(\ref{Lequation}) holds for
$\Lipschitz=\diag{\Lipschitzi{1},...,\Lipschitzi{\nn}}$.  
\end{assumption}
%
%
Assumption~\ref{assumptionparallel} implicitly defines a set  $\setofnodes=\{1,...,\nn\}$ of coordinate directions with respective dimensions $\MMi{1},...,\MMi{\nn}$, suggesting  parallel optimization by coordinate descent.

\subsubsection{Coordinate descent}
In this study the {optimization} of  $\GPfunction\in\setoffunctionsn{\MM}$
at a point
$\llamb\in\setoflambdas$ 
along a particular coordinate direction $i\in \setofnodes$ 
is symbolized by the function
$\maplocalix{\GPfunction}{i}{\llamb}
\in\setoffunctionsn{\MMi{i}}$ 
obtained from~$\GPfunctionx{\llamb}$ by fixing the other coordinates, i.e.
\begin{equation}\label{coordinatefunction}
\maplocalixy{\GPfunction}{i}{\llamb}{\mmu}
\doteq
\GPfunctionx{\llambi{1},...,\llambi{i-1},\mmu,\llambi{i+1},...,\llambi{\nn}}
,\hspace{5mm} \forall \mmu\in
\RMMi{i}, \  i\in\setofnodes
.
\end{equation}
Using~(\ref{coordinatefunction}),
\newPa{we   consider in each direction~$i$ a scaling mapping~$\Scalei{i}:\setoffunctionsn{\MMi{i}}\times\setoflambdasi{i}\functionto\setofScalesn{\MMi{i}}$ and}{we consider along every coordinate~$i$ a scaling mapping~$\Scalei{i}:\setoffunctionsn{\MMi{i}}\times\setoflambdasi{i}\functionto\setofScalesn{\MMi{i}}$
a step size rule~$\mapAi{i}:\setoffunctionsn{\MMi{i}}\times\setoflambdasi{i}\functionto (0,1]$, and}
define the associated coordinate gradient projection mapping
\begin{equation}\label{coordinategradientprojection}
\mapGifx{i}{\GPfunction}{\llamb}\doteq\mapGTXfx{\Scalei{i}}{\setoflambdasi{i}}{\maplocalix{\GPfunction}{i}{\llamb}}{\llambi{i}},
\hspace{5mm}\forall\llamb\in\setoflambdas,\ i\in\setofnodes.
\end{equation}
Based on~(\ref{coordinategradientprojection}), we formulate  a \emph{synchronous} coordinate descent algorithm, modeled on the \emph{Jacobi method}, as $\llambk{k+1}=\mapJfx{\function}{\llambk{k}}$%
, where
\begin{equation}\label{mapJfunction}
\mapJfx{\GPfunction}{\llamb}\doteq\vect{\mapGi{1},\mapGi{2},...,\mapGi{\nn}}\vect{\GPfunction,\llamb}
\hspace{5mm}\forall\llamb\in\setoflambdas.
\end{equation}
Notice in~(\ref{mapJfunction}) that the coordinate descents $\mapGi{1},...,\mapGi{\nn}$ are  applied simultaneously along the~$\nn$ directions.
Global convergence of~$\mapJfunction$ is conditioned by its quality to guarantee sufficient descent along the  considered function at each step, which, in the general case, requires not only synchronism for the  applications of $\mapGi{1},...,\mapGi{\nn}$, but also consensus at the global level on the choice of scaling matrices and step-sizes in each direction~\cite{bilennethesis,bertsekas97}.
An alternative is to process the coordinate descents sequentially, using the directional mappings
\begin{equation}
\maphatGifx{i}{\GPfunction}{\llamb}\doteq\vect{\llambi{1},...,\llambi{i-1},\mapGifx{i}{\GPfunction}{\llamb},\llambi{i+1},...,\llambi{\nn}},
\hspace{5mm}\forall \llamb\in\setoflambdas,\  i\in\setofnodes.
\end{equation}
A \emph{cyclic} coordinate descent algorithm can then be designed  by applying the coordinate descent mappings  in a predefined order as in the \emph{Gauss-Seidel method}, i.e. $\llambk{k+1}=\mapSfx{\function}{\llambk{k}}$, where
\begin{equation}
\mapSfx{\GPfunction}{\llamb}\doteq
(\maphatGi{\nn}\mappingcompo\cdots\mappingcompo\maphatGi{2}\mappingcompo\maphatGi{1})\vect{\GPfunction,\llamb},
\hspace{5mm}\forall\llamb\in\setoflambdas,
\end{equation}
and~$\mappingcompo$ denotes the composition operator, defined for any $i,j\in\setofnodes$ by
$(\maphatGi{i}\mappingcompo\maphatGi{j})\vect{\GPfunction,\llamb}\doteq\maphatGifx{i}{\GPfunction}{\maphatGifx{j}{\GPfunction}{\llamb}}$.
The global convergence of~$\mapSfunction$ is guaranteed by  approximate line search in each coordinate direction, i.e. 
\newPa{%
using, for  $i\in\setofnodes$ and  at every $\llamb\in\setoflambdas$, the step sizes $\mapAfx{\maplocalix{\function}{i}{\llamb}}{\llambi{i}}$ 
 specified by~(\ref{stocharmijoconditionglobalone}).
}{%
by setting $\mapAifx{i}{\function}{\llamb}\doteq\mapAfx{\maplocalix{\function}{i}{\llamb}}{\llambi{i}}$ for $\llamb\in\setoflambdas$ and $i\in\setofnodes$, where~$\mapAfunction$ is given by the Armijo rule~(\ref{stocharmijoconditionglobalone}),
}%
One shows that convergence is conserved when the mappings~$\mapGi{i}$ are applied in random order provided that each coordinate direction is visited an infinite number of times~\cite{bilennethesis}.
A \emph{random} coordinate descent algorithm is given by
$\llambk{k+1}=\mapRkfx{k}{\function}{\llambk{k}}$, where
\begin{equation}\label{randomimplementation}
\mapRkfx{k}{\GPfunction}{\llamb}\doteq
\maphatGifx{\Pseqk{k}}{\GPfunction}{\llamb},
\hspace{5mm}\forall\llamb\in\setoflambdas, \ k=0,1,2,...,
\end{equation}
and~$\{\Pseqk{k}\}_{k=0}^{\infty}$ is a sequence of coordinate directions randomly selected in~$\setofnodes$, so that each $\Pseqk{k}$ is a realization of a discrete random variable defined on a probability space~$\probabilityspace{\setofnodes}{2^{\setofnodes}}{\probadistrifunction}$, with $\probadistrifunction=\vect{\probadistri{1},...,\probadistri{\nn}}\in{(0,1)}^{\nn}$.
%
More sophisticated parallel implementations of~$\mapGfunction$ involving  (block-) coordinate selection routines (e.g. \emph{Gauss-Southwell} methods~\cite{tseng09}) can also be devised.

For any parallel implementation of Algorithm~\ref{scaledgradientprojectionmapping} 
 designed with coordinate scaling~$\{\Scalei{i}\}_{i\in\setofnodes}$, 
%
we  let 
$ 
\Scalefx{\function}{\llamb}
\doteq
\diag{\Scaleifx{1}{\maplocalix{\function}{1}{\llamb}}{\llambi{1}},...,\Scaleifx{\nn}{\maplocalix{\function}{\nn}{\llamb}}{\llambi{\nn}}}
$
for $\llamb\in\setoflambdas$.
%
Proposition~\ref{propositionefficiency} extends to coordinate descent, and the step sizes computed by%
~(\ref{stocharmijoconditionglobalone}) in  each direction  reduce to~$1$ 
 if~(\ref{conditionefficiencynonlinear}) holds~\cite{bilennethesis}.
If the scaling mappings are continuous  and~$\smoothfunction$ is twice continuously differentiable, 
the Hessian $\hessianfunction{\smoothfunction}=(\hessianifunction{\smoothfunction}{ij})$ may be regarded as a block matrix and decomposed into
$ \hessianfunction{\smoothfunction} =  \newDfunction{\smoothfunction} - \newLfunction{\smoothfunction} - \newLfunction{\smoothfunction}^\trsp
 $,
where $\newDfunction{\smoothfunction} \doteq  \diag{ \hessianifunction{\smoothfunction}{11},..., \hessianifunction{\smoothfunction}{\nn\nn} }$ is block diagonal and~$\newLfunction{\smoothfunction}$ strictly lower triangular.
The step sizes of the parallel algorithms then reduce to~$1$ in the vicinity of%
~$\llambconv$ if
$ 2{(1-\Psig)} \Scaleifx{i}{\maplocalix{\function}{i}{\llambconv}}{\llambconv}^{-1}  \matrixg  \hessiani{\smoothfunction}{ii}{\llambconv}
$ 
%
holds for $i\in\setofnodes$,
i.e. if
\begin{equation} \label{coordinateconditionefficiency}
2 (1-\Psig) \Scalefx{\function}{\llambconv}^{-1}  \matrixg \newD{\smoothfunction}{\llambconv}.
\end{equation}
%
%

\subsubsection{Asymptotics}

By extension of Proposition~\ref{identificationofthesaturatednonnegativityconstraints}~(\cite{bilennethesis}), one shows the existence of a reduced space where every  sequence generated by a parallel implementation of Algorithm~\ref{scaledgradientprojectionmapping} and converging to the solution~$\llambconv$
accumulates under strict complementarity for~$\function$ 
at~$\llambconv$ or, equivalently, strict complementarity for~$\maplocalix{\function}{i}{\llambconv}$ at~$\llambconvi{i}$ in every direction~$i\in\setofnodes$.
The reduced space at $\llambconv=\vect{\llambconvi{1},...,\llambconvi{\nn}}$  is a Cartesian product and the matrix~$\matrixnonviolatingx{\llambconv}$, introduced in Section~\ref{referencereducedspace}, takes the block-diagonal form
%
$ 
\matrixnonviolatingx{\llambconv} 
= 
\diag{\matrixnonviolatingix{1}{\llambconvi{1}} ,..., \matrixnonviolatingix{\nn}{\llambconvi{\nn}} } 
$, 
where the columns of~$\matrixnonviolatingix{i}{\llambconvi{i}}$ form an orthonormal basis of the reduced space at~$\llambconvi{i}$ in the coordinate direction~$i$. 
%

Under Assumption~\ref{assumptionparallel}, $\grad\smoothfunction$ rewrites as the $\nn$-dimensio\-nal composite 
composite vector $\grad\smoothfunction=\vect{\gradi{1}\smoothfunction,...,\gradi{\nn}\smoothfunction}$.
For $\llamb\in\setoflambdas$, we 
 define 
$\graditilde{i}\smooth{\llamb} \doteq \matrixnonviolatingix{i}{\llambconvi{i}}^\trsp \gradi{i}\smooth{\llamb} $ ($i\in\setofnodes$).
Similarly, setting
$\LCScalefx{\function}{\llamb} \doteq \lbrack \matrixnonviolatingx{\llambconv}^\trsp \Scalefx{\function}{\llamb}^{-1}   \matrixnonviolatingx{\llambconv} \rbrack^{-1} $
yields the block-diagonal form
$ 
\LCScalefx{\function}{\llamb}=\diag{\LCScaleifx{1}{\maplocalix{\function}{1}{\llamb}}{\llambi{1}},...,\LCScaleifx{\nn}{\maplocalix{\function}{\nn}{\llamb}}{\llambi{\nn}}}
$,
where the  diagonal elements are given by $\LCScaleifx{i}{\maplocalix{\function}{i}{\llamb}}{\llambi{i}} \doteq  \lbrack \matrixnonviolatingix{i}{\llambconvi{i}}^\trsp \Scaleifx{i}{\maplocalix{\function}{i}{\llamb}}{\llambi{i}}^{-1}   \matrixnonviolatingix{i}{\llambconvi{i}} \rbrack^{-1} $. 
%
%

In the reduced space at~$\llambconv$, a gradient projection
$
\mmu = \mapGifx{i}{\function}{\llamb}
$
with step size~$\Paok$ in direction $i\in\setofnodes$ at a point $\llamb\in\setoflambdas$ near~$\llambconv$
 reduces, by translation of Proposition~\ref{unconstraineddescent} into the coordinate descent framework~(\cite{bilennethesis}),
to
$\llamb = \llambconv + \matrixnonviolatingx{\llambconv} \LClamb  $ and $\mmu = \llambconvi{i} + \matrixnonviolatingix{i}{\llambconvi{i}} \LCmu  $ for some
$  \LClamb\in \REAL{\tildeMM} $ and $\LCmu \in \REAL{\tildeMMi{i}} $ satisfying
\begin{equation} \label{coordinateunconstrainedequation}
\LCmu = 
\LClambi{i} - 
\Paok\LCScaleifx{i}{\maplocalix{\function}{i}{\llamb}}{\llambi{i}} \graditilde{i} \smooth{\llamb},
\hspace{5mm}\forall i\in\setofnodes.
\end{equation}

%

If~$\function$ is twice continuously differentiable and~$\Scalefx{\function}{\cdot}$ is continuous  at~$\llambconv$, then~(\ref{coordinateunconstrainedequation}) asymptotically reduces to 
\begin{equation}\label{coordinateasymptotic}
\LCmu = 
[\tildeidentityi{i}- 
 \vect{0,...,0,\Paok\tildeidentityi{i},0,...,0}^\trsp \LCScalefx{\function}{\llambconv} \hessiantilde{\smoothfunction}{\llambconv}]
\LClamb
+\zero{\norm{\LClamb}}
, 
\hspace{2mm}\forall i\in\setofnodes,
\end{equation}
%
where $\hessiantilde{\smoothfunction}{\llamb}\doteq \matrixnonviolatingx{\llambconv}^\trsp   \hessian{\smoothfunction}{\llamb} \matrixnonviolatingx{\llambconv}$ and~$\tildeidentityi{i}$ denotes the identity matrix in~$\REAL{\tildeMMi{i}\times\tildeMMi{i}}$.
Similarly,  
%
we write
\mbox{$ 
\hessiantildefunction{\smoothfunction} =  \newtildeDfunction{\smoothfunction} - \newtildeLfunction{\smoothfunction} - \newtildeLfunction{\smoothfunction}^\trsp 
$},
where $ \newtildeD{\smoothfunction}{\llamb} \doteq \matrixnonviolatingx{\llambconv}^\trsp \newD{\smoothfunction}{\llamb} \matrixnonviolatingx{\llambconv}$, and 
$ \newtildeL{\smoothfunction}{\llamb} \doteq \matrixnonviolatingx{\llambconv}^\trsp \newL{\smoothfunction}{\llamb} \matrixnonviolatingx{\llambconv}$.%

\section{Asymptotic convergence rates}\label{sectionconvergencerates}

The developments of this section rely on the following assumption.
\obsolete{
Linear convergence can often be shown for  gradient descent methods 
under certain conditions which typically include strong convexity of the considered function~\cite{luo93theorem}. 
%
%
In the case of polyhedral feasible sets, the condition for linear local convergence reduces to positive curvature of the function at the point of convergence and strict complementarity. We make the following assumption.
}
\begin{assumption}\label{assumptiondifferentiability}
Problem~\ref{initialproblem} has a unique solution~$\llambconv$ where strict complementarity holds 
and in the vicinity of which~$\smoothfunction$ is twice continuously differentiable.
\end{assumption}
\emph{Terminology ---}
We derive asymptotic rates of convergence for the algorithms of Section~\ref{sectionparalleloptimization} by first-order sensitivity analysis around~$\llambconv$. Our aim is to find a matrix~$\LCMHfx{\function}{\llambconv}$ which satisfies an equation of the type $\genericresidual{\llambk{k+1}} =\LCMHfx{\function}{\llambconv}\genericresidual{\llambk{k}}+\zero{\genericresidual{\llambk{k}}}$ for some continuous function~$\genericresidualfunction$ and any sequence~$\{\llambk{k}\}$ generated by the considered algorithm.
If this equation holds for $\LCMHfx{\function}{\llambconv}\leq 1$, we say that~$\{\genericresidual{\llambk{k}}\}$ converges 
towards~$\genericresidual{\llambconv}$ \emph{with asymptotic rate}~$\LCMHfx{\function}{\llambconv}$. Convergence is called \emph{sublinear} if~$\LCMHfx{\function}{\llambconv}= 1$, and \emph{linear} if~$\LCMHfx{\function}{\llambconv}< 1$.
 If~$\LCMHfx{\function}{\llambconv}$ satisfies the inequality
$\genericresidual{\llambk{k+1}} \leq \LCMHfx{\function}{\llambconv}\genericresidual{\llambk{k}}+\zero{\genericresidual{\llambk{k}}}$
for any generated sequence and  the algorithm may produce a sequence for which the latter inequality holds with equality sign, then we speak of  
convergence \emph{with limiting asymptotic rate}~$\LCMHfx{\function}{\llambconv}$.
%
%
%

\obsolete{
\paragraph{Notation ---} Under Assumptions~\ref{assumptionparallel} and~\ref{assumptiondifferentiability}, the gradient of~$\smoothfunction$ rewrites as the $\nn$-dimensio\-nal composite vector $\grad\smoothfunction=\vect{\gradi{1}\smoothfunction,...,\gradi{\nn}\smoothfunction}$, and the Hessian $\hessianfunction{\smoothfunction}=(\hessianifunction{\smoothfunction}{ij})$ is regarded as a block matrix and decomposed into
$ \hessianfunction{\smoothfunction} =  \newDfunction{\smoothfunction} - \newLfunction{\smoothfunction} - \newLfunction{\smoothfunction}^\trsp
 $,
where $\newDfunction{\smoothfunction} \doteq  \diag{ \hessianifunction{\smoothfunction}{11},..., \hessianifunction{\smoothfunction}{\nn\nn} }$ is block diagonal and~$\newLfunction{\smoothfunction}$ strictly lower triangular.
In the reduced space at~$\llambconv$, we successively consider
$\hessiantilde{\smoothfunction}{\llamb}\doteq \matrixnonviolatingx{\llambconv}^\trsp   \hessian{\smoothfunction}{\llamb} \matrixnonviolatingx{\llambconv}$,
%
$ \newtildeD{\smoothfunction}{\llamb} \doteq \matrixnonviolatingx{\llambconv}^\trsp \newD{\smoothfunction}{\llamb} \matrixnonviolatingx{\llambconv}$, and 
$ \newtildeL{\smoothfunction}{\llamb} \doteq \matrixnonviolatingx{\llambconv}^\trsp \newL{\smoothfunction}{\llamb} \matrixnonviolatingx{\llambconv}$%
. Hence,
\mbox{$ 
\hessiantildefunction{\smoothfunction} =  \newtildeDfunction{\smoothfunction} - \newtildeLfunction{\smoothfunction} - \newtildeLfunction{\smoothfunction}^\trsp 
$}. 

}%

Proposition~\ref{localconvergenceprop} extends to polyhedral sets and arbitrary scaling a result derived in~\cite{crouzeix92} for a  cyclic coordinate descent algorithm used in the non-negative orthant with coordinate-wise Newton scaling.
The proof provided in this paper is arguably simpler and the requirements less restrictive. The \emph{spectral radius} of any matrix $\genericnormscale\in\REAL{\notn\times \notn}$, defined as the supremum among the absolute values of the eigenvalues of~$\genericnormscale$, is denoted by~$\spectralradius{\genericnormscale}$. 
%
\begin{proposition} [{Asymptotic convergence rate of~$\mapSfunction$}]  
\label{localconvergenceprop}
Let Assumptions~\ref{assumptionparallel} and~\ref{assumptiondifferentiability} hold for Problem~\ref{initialproblem}.
%
%
Consider the cyclic algorithm $\llambk{k+1}=\mapSfx{\function}{\llambk{k}}$  implemented with 
the step-size selection rule~(\ref{stocharmijoconditionglobalone}) and with 
scaling~$\{\Scaleifx{i}{\maplocalix{\function}{i}{\llamb}}{\llambi{i}}\}_{i\in\setofnodes}$ 
continuous at~$\llambconv$, 
and satisfying  
condition~(\ref{coordinateconditionefficiency}) for the step sizes.
Any sequence~$\{\llambk{k}\}$ generated by the algorithm  converges towards~$\llambconv$ with  asymptotic rate~$\matrixnonviolatingx{\llambconv}\LCMfx{\function}{\llambconv}$, where
\obsolete{
Consider Problem~\ref{initialproblem} under Assumption~\ref{assumptionparallel} and the cyclic algorithm $\llambk{k+1}=\mapSfx{\function}{\llambk{k}}$  implemented with 
the step-size selection rule~(\ref{stocharmijoconditionglobalone}) and with 
scaling mappings~$\Scaleifx{i}{\maplocalix{\function}{i}{\llamb}}{\llambi{i}}$  continuously differentiable in~$\llamb$ in a neighborhood of~$\llambconv$ ($i\in\setofnodes$).
\obsolete{
and such that
\begin{equation} \label{ostrowcondition}
2 \LCScalefx{\function}{\llambconv}^{-1} \matrixg \newtildeD{\smoothfunction}{\llambconv}  .
\end{equation} 
}
%
%
%
%
Let~$\{\llambk{k}\}$ be a sequence  generated by the algorithm and converging  to a solution~$\llambconv$  where  strict complementarity holds,
$\smoothfunction$ is twice continuously differentiable
 $\hessianfunction{\smoothfunction}$ is positive definite and continuous, and  condition~(\ref{conditionefficiency}) for unit step sizes near~$\llambconv$ is met.
%
%
%
%
Then~$(\llambk{k})$ converges linearly to~$\llambconv$ with rate
}
\begin{equation} \label{LCMrate}
\LCMfx{\function}{\llambconv} = \left\lbrack \LCScalefx{\function}{\llambconv}^{-1} -  \newtildeL{\smoothfunction}{\llambconv} \right\rbrack^{-1}  \left\lbrack \LCScalefx{\function}{\llambconv}^{-1} - \newtildeD{\smoothfunction}{\llambconv} + \newtildeL{\smoothfunction}{\llambconv}^\trsp \right\rbrack,
\end{equation}
with~$\spectralradius{\LCMfx{\function}{\llambconv}}\leq1$%
%
, while~$\singlenorm{\smooth{\llambk{k}}-\smooth{\llambconv}}$ vanishes  with limiting asymptotic rate
$
%
\spectralradius{\LCMfx{\function}{\llambconv}}^2
$.
If~$\hessian{\smoothfunction}{\llambconv}$ is positive definite, then~$\spectralradius{\LCMfx{\function}{\llambconv}}<1$.
\end{proposition}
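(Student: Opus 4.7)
The plan is to split the analysis into four stages and reduce the whole question, via Propositions~\ref{identificationofthesaturatednonnegativityconstraints} and~\ref{propositionefficiency} (extended to coordinate descent), to a linear algebra computation of Gauss--Seidel type.

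First I would invoke the coordinate version of Proposition~\ref{identificationofthesaturatednonnegativityconstraints} together with the coordinate version of Proposition~\ref{propositionefficiency}: strict complementarity at~$\llambconv$ and condition~(\ref{coordinateconditionefficiency}) respectively guarantee that, for all $k$ large enough, every intermediate point produced inside the cycle belongs to the manifold $\{\llamb \in \setoflambdas \setst \activeaffinex{\llamb} = \activeaffinex{\llambconv}\}$ and that every coordinate step size equals~$1$. From step~$k$ onwards, one full cycle of~$\mapSfunction$ can thus be written, via~(\ref{coordinateasymptotic}), as $\tildeMMi{i}$ sequential unconstrained updates in the reduced coordinates. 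Writing $\llambk{k} = \llambconv + \matrixnonviolatingx{\llambconv}\LClambk{k}$ and $\llambk{k+1} = \llambconv + \matrixnonviolatingx{\llambconv}\LClambk{k+1}$, the $i$-th coordinate descent in the cycle rewrites as
\begin{equation}
\LClambk{k+1}_i = \LClambk{k}_i - \LCScaleifx{i}{\maplocalix{\function}{i}{\llambconv}}{\llambconvi{i}}\,\graditilde{i}\smooth{\cdot}
\end{equation}
evaluated at the current iterate, which mixes already-updated block components $\LClambk{k+1}_1,\dots,\LClambk{k+1}_{i-1}$ with pending components $\LClambk{k}_i,\dots,\LClambk{k}_\nn$.

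Next I would linearise $\graditilde{i}\smooth{\cdot}$ around~$\llambconv$ using the decomposition $\hessiantildefunction{\smoothfunction} = \newtildeDfunction{\smoothfunction} - \newtildeLfunction{\smoothfunction} - \newtildeLfunction{\smoothfunction}^\trsp$ and $\graditilde{}\smooth{\llambconv} = 0$ in the reduced space. Stacking the $\nn$ identities of the cycle in block-triangular form and collecting already-updated components on the left-hand side yields, up to $o(\|\LClambk{k}\|)$ terms controlled by Proposition~\ref{unconstraineddescent},
\begin{equation}
[\LCScalefx{\function}{\llambconv}^{-1} - \newtildeL{\smoothfunction}{\llambconv}]\,\LClambk{k+1}
= [\LCScalefx{\function}{\llambconv}^{-1} - \newtildeD{\smoothfunction}{\llambconv} + \newtildeL{\smoothfunction}{\llambconv}^\trsp]\,\LClambk{k} + o(\|\LClambk{k}\|),
\end{equation}
which gives~(\ref{LCMrate}) after inverting the lower-triangular factor (nonsingular since $\LCScalefx{\function}{\llambconv}^{-1} \succ 0$ and $\newtildeL{\smoothfunction}{\llambconv}$ is strictly block-lower triangular). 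Transporting back to~$\RMM$ through~$\matrixnonviolatingx{\llambconv}$ yields the claimed asymptotic rate $\matrixnonviolatingx{\llambconv}\LCMfx{\function}{\llambconv}$ for the residual $\llambk{k} - \llambconv$.

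The main obstacle is the spectral radius bound $\spectralradius{\LCMfx{\function}{\llambconv}} \leq 1$, and in particular its strict form when $\hessian{\smoothfunction}{\llambconv} \succ 0$. This is a Stein/Ostrowski-type argument: setting $M \doteq \LCScalefx{\function}{\llambconv}^{-1} - \newtildeL{\smoothfunction}{\llambconv}$, one has $M + M^\trsp - \hessiantilde{\smoothfunction}{\llambconv} = 2\LCScalefx{\function}{\llambconv}^{-1} - \newtildeD{\smoothfunction}{\llambconv}$, which is positive definite by the projection of~(\ref{coordinateconditionefficiency}) onto the reduced space. A standard energy computation with the quadratic form $\LClamb \mapsto \LClamb^\trsp \hessiantilde{\smoothfunction}{\llambconv}\LClamb$ then shows that this form is non-increasing under one cycle and strictly decreasing whenever $\hessiantilde{\smoothfunction}{\llambconv} \LClamb \neq 0$, which delivers $\spectralradius{\LCMfx{\function}{\llambconv}} \leq 1$ in general and $\spectralradius{\LCMfx{\function}{\llambconv}} < 1$ when $\hessian{\smoothfunction}{\llambconv}$ is positive definite (so that $\hessiantilde{\smoothfunction}{\llambconv}$ is too). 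Finally, the rate $\spectralradius{\LCMfx{\function}{\llambconv}}^2$ for $|\smooth{\llambk{k}} - \smooth{\llambconv}|$ follows from Taylor expansion, $\smooth{\llambk{k}} - \smooth{\llambconv} = \tfrac{1}{2}\LClambk{k\trsp}\hessiantilde{\smoothfunction}{\llambconv}\LClambk{k} + o(\|\LClambk{k}\|^2)$, combined with the already-established geometric decay of $\LClambk{k}$ at rate $\spectralradius{\LCMfx{\function}{\llambconv}}$, the square exponent reflecting that the cost residual is quadratic in the state residual.
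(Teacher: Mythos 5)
Your proof is correct and arrives at the same reduced-space Gauss--Seidel picture as the paper, but two of its stages take a genuinely different route. For the formula~(\ref{LCMrate}) you stack the $\nn$ linearized block updates of one cycle into the triangular system $[\LCScalefx{\function}{\llambconv}^{-1}-\newtildeL{\smoothfunction}{\llambconv}]\LClambk{k+1}=[\LCScalefx{\function}{\llambconv}^{-1}-\newtildeD{\smoothfunction}{\llambconv}+\newtildeL{\smoothfunction}{\llambconv}^\trsp]\LClambk{k}+o(\norm{\LClambk{k}})$ and invert the nonsingular left factor --- the classical derivation, and a perfectly valid one. The paper instead expresses the cycle as the product $\LCMGifx{\nn}{\function}{\llambconv}\cdots\LCMGifx{1}{\function}{\llambconv}$ of per-coordinate shrinkage matrices and proves the purely algebraic Lemma~\ref{lemmamatrices} to identify that product with~(\ref{LCMrate}); this buys nothing for the present proposition but the individual factors $\LCMGifx{i}{\function}{\llambconv}$ are reused verbatim in the analysis of the random implementation, which is why the paper sets things up that way. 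For the spectral bound you reprove the Ostrowski--Reich theorem by an energy argument on $\LClamb\mapsto\LClamb^\trsp\hessiantilde{\smoothfunction}{\llambconv}\LClamb$, whereas the paper simply cites it; your identity $M+M^\trsp-\hessiantilde{\smoothfunction}{\llambconv}=2\LCScalefx{\function}{\llambconv}^{-1}-\newtildeD{\smoothfunction}{\llambconv}\matrixg 0$ is exactly the hypothesis being checked, so this part is equivalent in substance. Two caveats. First, when $\hessian{\smoothfunction}{\llambconv}$ is only positive semi-definite your energy functional is degenerate, and monotone non-increase of a degenerate quadratic form does not by itself bound the spectral radius; you need either a direct eigenvalue computation (an eigenpair with $\lambda\neq 1$ satisfies $\hessiantilde{\smoothfunction}{\llambconv}v=(1-\lambda)Mv$ with $v^{*}\hessiantilde{\smoothfunction}{\llambconv}v>0$, forcing $\singlenorm{\lambda}<1$) or the paper's device of perturbing to $\smooth{\llamb}+\frac{\epsilon}{2}\norm{\llamb}^2$ and letting $\epsilon\to0$ by continuity of eigenvalues. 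Second, the claim that the cost residual vanishes with \emph{limiting asymptotic rate} $\spectralradius{\LCMfx{\function}{\llambconv}}^2$ is, in the paper's terminology, a per-step inequality $\smooth{\llambk{k+1}}-\smooth{\llambconv}\leq\spectralradius{\LCMfx{\function}{\llambconv}}^2[\smooth{\llambk{k}}-\smooth{\llambconv}]+o(\cdot)$, which the paper obtains through the similarity transform $\hessiantilde{\smoothfunction}{\llambconv}^{\frac{1}{2}}\LCMfx{\function}{\llambconv}\hessiantilde{\smoothfunction}{\llambconv}^{-\frac{1}{2}}$; your remark that the exponent $2$ merely reflects quadratic dependence of the cost on the state only yields the weaker statement about the eventual geometric decay rate, so this step should be made explicit.
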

\begin{proof}
%
%
%
%
%
%
It follows from the assumptions, Proposition~\ref{propositionefficiency}  and~(\ref{coordinateunconstrainedequation}), that one can find a~$\ksc < \infty$ such that, for any $k\geq\ksc$,  we have
$\llambk{k}=\llambconv+ \matrixnonviolatingx{\llambconv} \LClambk{k}$ and
$\llambk{k+1}=\llambconv+ \matrixnonviolatingx{\llambconv} \LClambk{k+1}$
for some $\LClambk{k},\LClambk{k+1} \in \REAL{\tildeMM}$,
%
%
%
%
%
with
\begin{align}\label{taylorresult}
 \LClambk{k+1} 
=
\LCMGifx{\nn}{\function}{\llambconv}
\,
\LCMGifx{\nn-1}{\function}{\llambconv}
\cdots
\LCMGifx{1}{\function}{\llambconv}
\LClambk{k}  
+ \zero{\norm{ \LClambk{k}  }}
, 
\end{align}
%
where 
\begin{equation}\label{shrinkage}
\LCMGifx{i}{\function}{\llambconv}\doteq \tildeidentity - \diag{0,...,0,\tildeidentityi{i},0,...,0} \LCScalefx{\function}{\llambconv} \hessiantilde{\smoothfunction}{\llambconv}
,\hspace{5mm} \forall i\in\setofnodes,
\end{equation} 
embodies the effect of a  gradient projection along 
coordinate direction~$i$.
%
\obsolete{
It is easily seen that each 
eigenvalue of~$\LCMGifx{i}{\function}{\llambconv}$ is either~$1$ or  takes the form~$1-\lambda$ for some eigenvalue~$\lambda$  of~$\LCScaleifx{i}{\maplocalix{\function}{i}{\llambconv}}{\llambconvi{i}}
\hessiantildei{\smoothfunction}{ii}{\llambconv}$.
%
It follows from~(\ref{coordinateconditionefficiency}) that
$\norm{\LCMGifx{i}{\function}{\llambconv}\LClamb}\leq\norm{\LClamb}$ for all $\LClamb\in\REAL{\tildeMM}$, $i\in\setofnodes$.
%
}%
Applying  Lemma~\ref{lemmamatrices} from the Appendix yields
$
\LCMGifx{\nn}{\function}{\llambconv}
\cdots
\LCMGifx{1}{\function}{\llambconv}
=
\LCMfx{\function}{\llambconv}
$.
\obsolete{
By induction we find 
$\norm{\LCMfx{\function}{\llambconv}\LClamb}\leq\norm{\LClamb}$ for all $\LClamb\in\REAL{\tildeMM}$. Hence,
$\spectralradius{\LCMfx{\function}{\llambconv}} 
\leq1$~\cite{varga00}.
}%

\obsolete{
---

Let~$\tildeidentityi{i}$ denote the identity matrix in~$\REAL{\tildeMMi{i}}$ and define 
$ 
\tildematrixidentityi{i} \doteq \diag{\tildeidentityi{1},...,\tildeidentityi{i-1},0,...,0} 
$. 
Let $\LClamb$ be a point of $\REAL{\tildeMM}$,  $\LCdfunction\in\REAL{\tildeMM}$ 
symbolize a cyclic displacement from~$\LClamb$ in~$ \REAL{\tildeMM}$, 
and $\mmudyi{i}{\LCdfunction}{\LClamb} \doteq \llambconv + \matrixnonviolatingx{\llambconv} ( \LClamb + \tildematrixidentityi{i} \, \LCdfunction )
$
reflect the intermediary point obtained in~$\setoflambdas$ after  
 displacement in the first~$i-1$ directions ($1\leq i \leq \nn$). 
%
%
Consider the $\REAL{\tildeMM}\times\REAL{\tildeMM}\functionto\REAL{\tildeMM}$ function~$\LCHfunction=\vect{\LCHifunction{1},...,\LCHifunction{\nn}}$ such that 
\begin{equation}
\LCHi{i}{\LCdfunction}{\LClamb} = \lbrack \Pssi{i} 
\LCScaleifx{i}{\maplocalix{\function}{i}{\mmudyi{i}{\LCdfunction}{\LClamb}}}{\LClambi{i}}
\rbrack^{-1} \LCdifunction{i} +  \graditilde{i} \smooth{\mmudyi{i}{\LCdfunction}{\LClamb}} , \hspace{5mm} \forall i \in \setofnodes .
\end{equation}
The implicit function theorem claims the existence of a $\REAL{\tildeMM}\functionto\REAL{\tildeMM}$ function $\LCd{\LClamb}$ continuously differentiable in a neighborhood $\LChood\subset\REAL{\tildeMMi{i}}$ 
of~$\llambconv$ such that~$\LCd{\llambconv}=0$ and~$\LCH{\LCd{\LClamb}}{\LClamb}=0$ for all~$\LClamb\in\LChood$. 
In fact, 
$\LCd{\LClamb}$ is the displacement from~$ \LClamb$ in the subspace caused by $\mapSfx{\function}{\llambconv + \matrixnonviolatingx{\llambconv} \LClamb}$,
and such that 
\begin{equation}  
\LCdi{i}{\LClamb} = - \Pssi{i}  
\LCScaleifx{i}{\maplocalix{\function}{i}{\mmudyi{i}{\LCd{\LClamb}}{\LClamb}}}{\LClambi{i}}
 \graditilde{i} \smooth{\mmudyi{i}{\LCd{\LClamb}}{\LClamb}}  , \hspace{5mm} \forall \LClamb\in\LChood,\ i \in \setofnodes .
\end{equation}
Consider the function $\LCHbis{\LClamb} \doteq \LCH{\LCd{\LClamb}}{\LClamb}$. By differentiation of~$\LCHbisfunction$ at~$\llambconv$ we find
\begin{equation}
\jacobian{\LCHbisfunction}{\llambconv} = \lbrack  \LCScalefx{\function}{\llambconv}^{-1} - \newtildeL{\smoothfunction}{\llambconv} \rbrack \jacobian{\LCdfunction}{\llambconv} + \hessiantilde{\smoothfunction}{\llambconv} ,
\end{equation}
where~$\jacobianfunction{\LCHbisfunction}$ and~$\jacobianfunction{\LCdfunction}$ {denote} the  Jacobians of~$\LCHbisfunction$ and~$\LCdfunction$, respectively.
From~$\jacobian{\LCHbisfunction}{\llambconv} = 0$ 
follows
\begin{equation} \label{jacobiand}
\jacobian{\LCdfunction}{\llambconv} = - \lbrack \LCScalefx{\function}{\llambconv}^{-1} - \newtildeL{\smoothfunction}{\llambconv} \rbrack^{-1}  \hessiantilde{\smoothfunction}{\llambconv} .
\end{equation}

It follows from the assumptions, Proposition~\ref{propositionefficiency}  and~(\ref{coordinateunconstrainedequation}), that one can find a~$\ksc < \infty$ such that, for any $k\geq\ksc$,  we have
$\llambk{k}=\llambconv+ \matrixnonviolatingx{\llambconv} \LClambk{k}$ and
$\llambk{k+1}=\llambconv+ \matrixnonviolatingx{\llambconv} \LClambk{k+1}$
with
$\LClambk{k+1} = \LClambk{k} + \LCd{\LClambk{k}}$
for some $\LClambk{k},\LClambk{k+1} \in \REAL{\tildeMM}$.
%
%
%
%
%
Taylor's theorem and $\LCd{\llambconv}=0$ yield, for $k\geq\ksc$,
\begin{align}\label{taylorresult}
 \LClambk{k+1} 
 =
\LClambk{k} +  \jacobian{\LCdfunction}{\llambconv}  \LClambk{k}  
+ \zero{\norm{ \LClambk{k}  }} 
=
\LCMfx{\function}{\llambconv}  \LClambk{k}  
+ \zero{\norm{ \LClambk{k}  }}. 
\end{align}
%
%
%
%
%
%
%
}%

%
%
%
%
%
Consider the sequence of function values~$\{\functionx{\llambk{k}}\}$.
It follows from Proposition~\ref{identificationofthesaturatednonnegativityconstraints} and~(\ref{KKTsc}) that, for~$k$ large enough, we have $\grad\smooth{\llambconv}^\trsp(\llambk{k}-\llambconv)=0$. Setting
$
\Nlambk{k}
\doteq
\hessiantilde{\smoothfunction}{\llambconv}^{\frac{1}{2}}
\LClambk{k}
$
and
$
\NMfx{\function}{\llambconv}
\doteq
\hessiantilde{\smoothfunction}{\llambconv}^{\frac{1}{2}}
\LCMfx{\function}{\llambconv}
\hessiantilde{\smoothfunction}{\llambconv}^{-\frac{1}{2}}
$,
the Taylor theorem yields
\begin{align}
\notag
\begin{array}{c}
\smooth{\llambk{k+1}}
-
\smooth{\llambconv}
\end{array}
\hspace{-8mm}
&
\\
\label{equationbegin}
&
\begin{array}{c}
=
\frac{1}{2}(\llambk{k+1}-\llambconv)^\trsp\hessian{\smoothfunction}{\llambconv}(\llambk{k+1}-\llambconv)
+\zero{\norm{\llambk{k+1}-\llambconv}^2}
\end{array}
\\
&
\begin{array}{c}
=\label{intermediate}
\frac{1}{2}(\LClambk{k+1})^\trsp\hessiantilde{\smoothfunction}{\llambconv}\LClambk{k+1}
+\zero{\norm{\LClambk{k+1}}^2}
\end{array}
\\
&
\begin{array}{c}
\refeq{\!\!(\ref{taylorresult})\!\!}{=}
\frac{1}{2}[\LCMfx{\function}{\llambconv}\LClambk{k}]^\trsp\hessiantilde{\smoothfunction}{\llambconv}\LCMfx{\function}{\llambconv}\LClambk{k}
+\zero{\norm{\LClambk{k}}^2}
\end{array}
\\
&
\begin{array}{c}
=\label{lastequality}
\frac{1}{2}
\norm{\NMfx{\function}{\llambconv}\Nlambk{k}}^2
+\zero{\norm{\LClambk{k}}^2}
\end{array}
\\
&
\begin{array}{c}
\leq
\frac{1}{2}
\spectralradius{\NMfx{\function}{\llambconv}}^2
\norm{\Nlambk{k}}^2
+\zero{\norm{\LClambk{k}}^2}
\end{array}
\\
&
\begin{array}{c}
\refeq{\!\!(\ref{intermediate})\!\!}{=}
\spectralradius{\NMfx{\function}{\llambconv}}^2
(\functionx{\llambk{k}}-\functionx{\llambconv})
+\zero{\norm{\llambk{k}-\llambconv}^2}
.
\end{array}
\label{equationend}
\end{align}
%
\obsolete{
Noting that 
\obsolete{
$\spectralradius{\LCMfx{\function}{\llambconv}^\trsp
\hessiantilde{\smoothfunction}{\llambconv}
\LCMfx{\function}{\llambconv}
\hessiantilde{\smoothfunction}{\llambconv}^{-1}}
=
\spectralradius{\NMfx{\function}{\llambconv}^\trsp\NMfx{\function}{\llambconv}}\leq\spectralradius{\NMfx{\function}{\llambconv}}^2=\spectralradius{\LCMfx{\function}{\llambconv}}^2<1$,
}%
$\spectralradius{\NMfx{\function}{\llambconv}}=\spectralradius{\LCMfx{\function}{\llambconv}}<1$
completes the proof.
}%
%
%
\obsolete{%
Using~$\LCd{\llambconv}=0$ and~(\ref{jacobiand}), the Taylor theorem gives
\begin{align}
 \LClambk{k+1} 
   & =  \LClambk{k} +  \LCd{\LClambk{k}}
\\ & = \LClambk{k} +  \jacobian{\LCdfunction}{\llambconv} \LClambk{k} + \functionhx{\LClambk{k}} 
\\ & = \LCMfx{\function}{\llambconv}  \LClambk{k}  + \functionhx{\LClambk{k}} , \label{finalequationM}
\end{align}
with $\functionhx{\llamb} =  \zero{\norm{\llamb  }} $.
We can rewrite~$ \LCMfx{\function}{\llambconv} $  as
\begin{equation}
\LCMfx{\function}{\llambconv} = ( \LCostrowD{\llambconv} -  \LCostrowE{\llambconv} )^{-1}   \LCostrowE{\llambconv}^\trsp  ,
\end{equation}
where~$ \LCostrowD{\llambconv} =  2 \LCScalefx{\function}{\llambconv}^{-1} - \newtildeD{\smoothfunction}{\llambconv} $ and~$  \LCostrowE{\llambconv} = \LCScalefx{\function}{\llambconv}^{-1} - \newtildeD{\smoothfunction}{\llambconv} + \newtildeL{\smoothfunction}{\llambconv}$.
Noting that $ \LCostrowD{\llambconv} -  \LCostrowE{\llambconv} -  \LCostrowE{\llambconv}^\trsp =  \hessiantilde{\smoothfunction}{\llambconv}$ is positive definite and~$(\LCostrowD{\llambconv} - \LCostrowE{\llambconv})$ is nonsingular, the {Ostrowski-Reich theorem} (see \cite{reich49,ostrowski54} and~\cite[Theorem 3.12]{varga00}) states that~$\spectralradius{\LCMfx{\function}{\llambconv}} < 1$ if~$  \LCostrowD{\llambconv} \matrixg 0 $, i.e. if
\begin{equation} \label{ostrowcondition}
2 \LCScalefx{\function}{\llambconv}^{-1} - \newtildeD{\smoothfunction}{\llambconv} \matrixg 0 .
\end{equation} 
}%

We now characterize~$\spectralradius{\NMfx{\function}{\llambconv}}$.
First assume now that~$\hessian{\smoothfunction}{\llambconv}$ is positive definite.
Observe that
$ 
\LCMfx{\function}{\llambconv} = ( \LCostrowD{\llambconv} -  \LCostrowE{\llambconv} )^{-1}   \LCostrowE{\llambconv}^\trsp  
$, 
where~$ \LCostrowD{\llambconv} =  2 \LCScalefx{\function}{\llambconv}^{-1} - \newtildeD{\smoothfunction}{\llambconv} $ and~$  \LCostrowE{\llambconv} = \LCScalefx{\function}{\llambconv}^{-1} - \newtildeD{\smoothfunction}{\llambconv} + \newtildeL{\smoothfunction}{\llambconv}$.
Noting that $ \LCostrowD{\llambconv} -  \LCostrowE{\llambconv} -  \LCostrowE{\llambconv}^\trsp =  \hessiantilde{\smoothfunction}{\llambconv}$ is positive definite and~$(\LCostrowD{\llambconv} - \LCostrowE{\llambconv})$ is nonsingular, the \emph{Ostrowski-Reich theorem}~\cite[Theorem 3.12]{varga00} 
states that~$\spectralradius{\LCMfx{\function}{\llambconv}} < 1$ if~$  \LCostrowD{\llambconv} \matrixg 0 $, i.e. 
if
\begin{equation} \label{ostrowcondition}
2 \LCScalefx{\function}{\llambconv}^{-1} \matrixg \newtildeD{\smoothfunction}{\llambconv}  .
\end{equation} 
%
Because~(\ref{coordinateconditionefficiency}) implies~(\ref{ostrowcondition}), we infer  that $\spectralradius{\LCMfx{\function}{\llambconv}} < 1$, and the algorithm converges linearly.

If, however, $\hessian{\smoothfunction}{\llambconv}$ is only positive semi-definite, then by computing under~(\ref{coordinateconditionefficiency}) the asymptotic rate for the function $\smooth{\llamb}+\frac{\epsilon}{2}\norm{\llamb}^2$ with Hessian $\hessianfunction{\smoothfunction}+\epsilon$ and letting $\epsilon\to 0$, we find $\spectralradius{\NMfx{\function}{\llambconv}}\leq 1$ by continuity of the eigenvalues of~$\NMfx{\function}{\llambconv}$ with respect to~$\hessian{\smoothfunction}{\llambconv}$, which completes the proof.
\QEDA\end{proof}
\obsolete{
\begin{remark}
In the conditions of Proposition~\ref{localconvergenceprop}, the local convergence of $\llambk{k+1}=\mapSfx{\function}{\llambk{k}}$ 
resembles that of the procedure which consists of solving the equation
$\hessiantilde{\smoothfunction}{\llambconv} \LClamb = 0 $
using the \emph{Gauss-Seidel} method~$\LClambk{k+1} = \LCMfx{\function}{\llambconv} \LClambk{k} $.
\end{remark}
}%
\begin{remark}\label{remarklocalcheck}
Since condition~(\ref{stocharmijoconditionglobalone}) 
is the  conjunction of~$\nn$ conditions  verifiable along individual directions, i.e. $ 2 (1-\Psig) \Scaleifx{i}{\maplocalix{\function}{i}{\llambconv}}{\llambconvi{i}}^{-1} \matrixg \hessiani{\smoothfunction}{ii}{\llambconv}$ for $i=\in\setofnodes$, the cyclic algorithm~$\mapSfunction$ is an attractive candidate for distributed optimization.
\end{remark}
\obsolete{
\begin{remark}
Observe that~(\ref{ostrowcondition}) holds if~$\mapSfunction$ is implemented with the Armijo rule~(\ref{stocharmijoconditionglobalone}) and condition~(\ref{conditionefficiency}) for asymptotically trivial step-size selection is satisfied.
\end{remark}
}
%
%
%
\obsolete{
\begin{remark}\label{remarkcoordinaterates}
It follows from Lemma~\ref{lemmamatrices} in the Appendix
that~(\ref{LCMrate}) rewrites as
\begin{equation}\label{LCMratebis}
\LCMfx{\function}{\llambconv}
=
\LCMGifx{\nn}{\function}{\llambconv}
\,
\LCMGifx{\nn-1}{\function}{\llambconv}
\cdots
\LCMGifx{1}{\function}{\llambconv},
\end{equation}
where $\LCMGifx{i}{\function}{\llambconv}\doteq \tildeidentity - \diag{0,...,0,\tildeidentityi{i},0,...,0} \LCScalefx{\function}{\llambconv} \hessiantilde{\smoothfunction}{\llambconv}$---in which~$\tildeidentity$ and~$\tildeidentityi{i}$ respectively denote the identity matrices in~$\REAL{\tildeMM}$ and~$\REAL{\tildeMMi{i}}$---symbolizes the error shrinkage resulting from a gradient projection along coordinate direction~$i$, which can be computed from~(\ref{LCMrate}) by using the scaling strategy $\Scaleifx{j}{\function}{\llamb}\equiv\epsilon\identityi{j}$ for every $j\neq i$ and $\llamb\in\setoflambdas$, and letting~$\epsilon\downarrow 0$. It is easily seen that  each 
eigenvalue of~$\LCMGifx{i}{\function}{\llambconv}$ takes the form~$1-\lambda$ for some eigenvalue~$\lambda$  of~$\LCScaleifx{i}{\maplocalix{\function}{i}{\llambconv}}{\llambconvi{i}}
\hessiantildei{\smoothfunction}{ii}{\llambconv}$.
%
Observe from~(\ref{ostrowcondition}) that~$\spectralradius{\LCMfx{\function}{\llambconv}} < 1$ iff~$\spectralradius{\LCScaleifx{i}{\maplocalix{\function}{i}{\llambconv}}{\llambconvi{i}}\hessiantildei{\smoothfunction}{ii}{\llambconv}} < 1$ for $i\in\setofnodes$, which is the condition for the convergence of all the~$\nn$ directional optimization processes taken separately along their own coordinate direction (cf. Section~\ref{sectionJacobi}).
\end{remark}
}%
\begin{remark}[Coordinate-wise Newton scaling]\label{sectionnewton}
\shorten{}{%
Propositions~\ref{propositionefficiency} and~\ref{localconvergenceprop} show the point of scaling the gradient projections according to the curvature of the function. In particular, when 
}%
When Newton scaling is used in each direction, i.e. $\Scaleifx{i}{\maplocalix{\function}{i}{\llamb}}{\llambi{i}}
=\hessiani{\smoothfunction}{ii}{\llamb}^{-1}$  
for $i=1,...,\nn$  or, equivalently,   $\Scalefx{\function}{\llamb}=\newD{\smoothfunction}{\llamb}^{-1}$, (\ref{conditionefficiency}) holds at the point of convergence,  and the asymptotic convergence rate~(\ref{LCMrate}) reduces to
$
\LCMfx{\function}{\llambconv} = 
\lbrack \newtildeD{\smoothfunction}{\llambconv} -  \newtildeL{\smoothfunction}{\llambconv} 
\rbrack^{-1}  \newtildeL{\smoothfunction}{\llambconv}^\trsp 
$.
 \label{sectionnewton}
\shorten{}{
Since the inversion of the $\hessianifunction{\smoothfunction}{ii}$ matrices can be computationally demanding, it is sometimes preferable to consider approximations of the inverse Hessians, using for instance diagonal scaling~\cite{bertsekas76,bilennethesis} or quasi-Newton methods~\cite{nocedal99,bertsekas99}.
}%
\end{remark}
%
%
\begin{remark}\label{remarkswitchingsystem}
In the case when condition~(\ref{conditionefficiency}) is not met and~$\mapAfx{\GPfunction}{\cdot}$ is discontinuous at~$\llambconv$, then $\mapSfunction$ then proves to converge locally like a stable discrete-time switching system defined by a rate set~$\{\LCMkfx{\switchfunction}{\function}{\llambconv} \}_{\switchfunction\in\Switch}$ with $\spectralradius{\LCMkfx{\switchfunction}{\function}{\llambconv}} \leq 1$ (or $\spectralradius{\LCMkfx{\switchfunction}{\function}{\llambconv}} < 1$ if~$\hessian{\smoothfunction}{\llambconv}$  is positive definite) for all $\switchfunction\in\Switch$, 
reducing for large~$k$ to
$ 
\LClambk{k+1} 
=
\LCMkfx{\switch{\LClambk{k}}}{\function}{\llambconv}  \LClambk{k}  + \zero{\norm{\LClambk{k}   }} 
$, 
where~$\switch{\cdot}$ is a switching function.
 \end{remark}

\obsolete{
\begin{remark}[Extensions]
When condition~(\ref{conditionefficiency}) for asymptotic efficiency of the step size selection rule is not satisfied, it is not always possible to derive an asymptotic rate of convergence for~$\mapSfunction$ if~$\mapAfx{\GPfunction}{\cdot}$ is discontinuous at~$\llambconv$. The algorithm then proves to converge locally like a stable discrete-time switching system defined by a rate set~$\{\LCMkfx{\switchfunction}{\function}{\llambconv} \}_{\switchfunction\in\Switch}$ with $\spectralradius{\LCMkfx{\switchfunction}{\function}{\llambconv}} < 1$ for all $\switchfunction\in\Switch$, 
reducing for large~$k$ to
\begin{equation}\label{switchingsystem}
\LClambk{k+1} 
=
\LCMkfx{\switch{\LClambk{k}}}{\function}{\llambconv}  \LClambk{k}  + \zero{\norm{\LClambk{k}   }} ,
\end{equation}
where~$\switch{\cdot}$ is a switching function.
\end{remark}
 
Local convergence as a switching system of the type~(\ref{switchingsystem}) can also be derived
for the case when~$\smoothfunction$ is not twice continuously differentiable at the point of convergence by considering the directional derivatives of~$\grad\smoothfunction$.

}

\subsection{Synchronous implementations}\label{sectionJacobi}

When implemented with identical step sizes in all coordinate directions,  the synchronous algorithm~$\mapJfunction$ proves to be equivalent to~$\mapGfunction$ endowed with block-diagonal scaling. We directly infer from~(\ref{globalrate}) the asymptotic convergence rate of~$\mapJfunction$, or by setting~$\nn\equiv1$ in Proposition~\ref{LCMrate}.

\begin{proposition} [{Asymptotic convergence rate of~$\mapJfunction$}]  
\label{localconvergencepropJacobi}
Let Assumptions~\ref{assumptionparallel} and~\ref{assumptiondifferentiability} hold for Problem~\ref{initialproblem}.
%
%
Consider the synchronous algorithm $\llambk{k+1}=\mapSfx{\function}{\llambk{k}}$  implemented %
with step size~$1$
in every coordinate direction and with 
scaling~$\{\Scaleifx{i}{\maplocalix{\function}{i}{\llamb}}{\llambi{i}}\}_{i\in\setofnodes}$ 
continuous at~$\llambconv$.
If~$\{\llambk{k}\}$ in a sequence generated by the algorithm  converging  towards~$\llambconv$, then~$\{\llambk{k}\}$ converges with asymptotic rate
\obsolete{
Consider Problem~\ref{initialproblem} under Assumption~\ref{assumptionparallel} and 
the synchronous algorithm $\llambk{k+1}=\mapSfx{\function}{\llambk{k}}$  implemented 
with unit step size $\Paok=1$ in all coordinate directions.
\obsolete{
and such that
\begin{equation} \label{ostrowcondition}
2 \LCScalefx{\function}{\llambconv}^{-1} \matrixg \newtildeD{\smoothfunction}{\llambconv}  
\end{equation} 
}
%
%
%
%
Let~$\{\llambk{k}\}$ be a sequence  generated by the algorithm which converges  to a solution~$\llambconv$  where  strict complementarity holds and~$\hessianfunction{\smoothfunction}$ is positive definite and continuous.
%
%
%
%
Then~$(\llambk{k})$ converges 
to~$\llambconv$ at  rate~$\matrixnonviolatingx{\llambconv}\LCMJfx{\function}{\llambconv}$, where
}%
\begin{equation} \label{LCMJrate}
\LCMJfx{\function}{\llambconv} = 
\tildeidentitysymbol
- \LCScalefx{\function}{\llambconv}
\hessiantilde{\smoothfunction}{\llambconv} 
.\end{equation}
\end{proposition}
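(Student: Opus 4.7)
The plan is to notice that~$\mapJfunction$ with unit step in every coordinate direction is nothing other than a single scaled gradient projection~$\mapGTXfunction{\Scale}{\setoflambdas}$ applied to the full product set~$\setoflambdas$ with the block-diagonal scaling~$\Scalefx{\function}{\cdot}$, and then to invoke Proposition~\ref{unconstraineddescent} directly. Indeed, since $\setoflambdas=\setoflambdasi{1}\times\cdots\times\setoflambdasi{\nn}$ and the composite scaling~$\Scalefx{\function}{\llamb}$ is block-diagonal by construction, the weighted projection problem~(\ref{equationprojection}) decouples into $\nn$ independent coordinate-wise projections; applying $\mapGi{1},\ldots,\mapGi{\nn}$ simultaneously with step size~$1$ therefore produces the same iterate as one application of $\mapGTXfx{\Scale}{\setoflambdas}{\function}{\cdot}$ with $\Paok=1$.

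From there, Proposition~\ref{identificationofthesaturatednonnegativityconstraints} guarantees that, along any converging sequence~$\{\llambk{k}\}$, the active constraint set stabilizes to~$\activeaffinex{\llambconv}$ in finite time, so that eventually $\llambk{k}=\llambconv+\matrixnonviolatingx{\llambconv}\LClambk{k}$ and $\llambk{k+1}=\llambconv+\matrixnonviolatingx{\llambconv}\LClambk{k+1}$ for some $\LClambk{k},\LClambk{k+1}\in\REAL{\tildeMM}$. Invoking~(\ref{globalrate}) with $\Paok\equiv 1$ together with the continuity of~$\Scalefx{\function}{\cdot}$ and of~$\hessianfunction{\smoothfunction}$ at~$\llambconv$ (guaranteed by Assumption~\ref{assumptiondifferentiability}) then yields
\begin{equation*}
\LClambk{k+1} = \bigl[\tildeidentitysymbol - \LCScalefx{\function}{\llambconv}\hessiantilde{\smoothfunction}{\llambconv}\bigr]\LClambk{k} + \zero{\norm{\LClambk{k}}},
\end{equation*}
which is exactly the claimed asymptotic rate~(\ref{LCMJrate}) in the reduced space at~$\llambconv$.

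An equivalent and arguably shorter route, as hinted in the statement, is to specialize Proposition~\ref{localconvergenceprop} by treating the entire vector as a single block (formally~$\nn\equiv 1$): in that case $\newtildeL{\smoothfunction}{\llambconv}=0$ and $\newtildeD{\smoothfunction}{\llambconv}=\hessiantilde{\smoothfunction}{\llambconv}$, and formula~(\ref{LCMrate}) collapses to~(\ref{LCMJrate}). No substantive obstacle is expected; the only point that warrants care is the equivalence between synchronous coordinate updates with unit step and a single block-scaled gradient projection, which follows immediately from the Cartesian product structure of~$\setoflambdas$ combined with the separability of the weighted projection under block-diagonal scaling.
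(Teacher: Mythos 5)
Your proposal is correct and follows essentially the same route as the paper, which likewise obtains the rate by observing that the synchronous algorithm with unit steps is a single application of $\mapGTXfunction{\Scale}{\setoflambdas}$ with block-diagonal scaling (using the Cartesian structure of~$\setoflambdas$) and then reading off~(\ref{LCMJrate}) from~(\ref{globalrate}), or equivalently by setting $\nn\equiv 1$ in Proposition~\ref{localconvergenceprop}. Both of your routes, including the observation that $\newtildeL{\smoothfunction}{\llambconv}=0$ and $\newtildeD{\smoothfunction}{\llambconv}=\hessiantilde{\smoothfunction}{\llambconv}$ in the single-block specialization, match the paper's argument.
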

\begin{remark}
In contrast with Remark~\ref{remarklocalcheck},  global and linear convergence of~$\mapJfunction$ may be difficult to assess  by inspection along individual directions.
In the particular case when $\MMi{i}=1$ for $i\in\setofnodes$ 
and~(\ref{ostrowcondition}) is satisfied, then~$\spectralradius{\LCMJfx{\function}{\llambconv}}$ is jointly characterized by Proposition~\ref{localconvergenceprop} and the \emph{Stein-Rosenberg theorem}~\cite[Theorem~3.8]{varga00}, which claim   that either $\spectralradius{\LCMfx{\function}{\llambconv}}=\spectralradius{\LCMJfx{\function}{\llambconv}}=1$, or
 $\spectralradius{\LCMfx{\function}{\llambconv}}<\spectralradius{\LCMJfx{\function}{\llambconv}}<1$
when~$\hessian{\smoothfunction}{\llambconv}$ is positive definite, in which case
any convergent sequence~$\{\llambk{k}\}$ generated by the algorithm~$\mapJfunction$   converges linearly. 
%
Notice that convergence is then asymptotically faster for the cyclic implementation~$\mapSfunction$ than for the synchronous implementation~$\mapJfunction$.
\end{remark}

\paragraph{Synchronous algorithms based on approximations of the Newton method ---}

A particular  approach explored e.g. in~\cite{zargham12paper,zargham2013accelerated,mokhtari15} 
is to find a compromise between the computational and organizational attractiveness of the  coordinate descent methods, which set $\Scalefx{\smoothfunction}{\llamb}\equiv\newD{\smoothfunction}{\llamb}^{-1}$ for $\llamb\in\setoflambdas$ under strong convexity of~$\smoothfunction$ and  converge linearly, 
%
and the quadratic convergence of the centralized Newton method, for which $\Scalefx{\smoothfunction}{\llamb}\equiv\hessian{\smoothfunction}{\llamb}^{-1}$. 
In these studies
~$\hessian{\smoothfunction}{\llamb}$ is assumed to be sparse and such that  the quantity
$\Scaleterm{\smoothfunction}{\llamb}\doteq
\newD{\smoothfunction}{\llamb}^{-\frac{1}{2}}[\newL{\smoothfunction}{\llamb}+\newL{\smoothfunction}{\llamb}^\trsp]\newD{\smoothfunction}{\llamb}^{-\frac{1}{2}}
$
can be computed in a distributed manner,
while
the inverse of the Hessian of~$\smoothfunction$ rewrites as the series
\begin{equation}\label{taylorinversehessian}
\begin{array}{c}
\hessian{\smoothfunction}{\llamb}^{-1}
=
\newD{\smoothfunction}{\llamb}^{-\frac{1}{2}}
[
\sum\nolimits_{t=0}^{\infty}
\Scaleterm{\smoothfunction}{\llamb}^{t} ]
\newD{\smoothfunction}{\llamb}^{-\frac{1}{2}}
\end{array}
\end{equation}
provided that
$\spectralradius{\Scaleterm{\smoothfunction}{\llamb}}<1$,
which holds under a strict diagonal dominance condition for~$\newD{\smoothfunction}{\llamb}^{-\frac{1}{2}}\hessian{\smoothfunction}{\llamb}\newD{\smoothfunction}{\llamb}^{-\frac{1}{2}}$ in virtue of the \emph{Gershgorin circle theorem}~\cite{varga00}.
%
%
The  approach suggested by~(\ref{taylorinversehessian})  is to generate vector sequences such that
$\llambk{k+1}=\mapZqfx{q}{\function}{\llambk{k}}$, where $\mapZqfunction{q}\equiv\mapGTXfunction{\Scale}{\setoflambdas}$ with scaling strategy
%
\begin{equation}\label{zargham}
\begin{array}{c}
\Scalefx{\smoothfunction}{\llamb}
=
\newD{\smoothfunction}{\llamb}^{-\frac{1}{2}}
[
\sum\nolimits_{t=0}^{q}
\Scaleterm{\smoothfunction}{\llamb}^{t}
]
\newD{\smoothfunction}{\llamb}^{-\frac{1}{2}}
,
\end{array}
\end{equation}
and~$q$ is a parameter symbolizing the implementability vs. rapidity trade-off, 
and
directly proportional to the computational complexity of the algorithm.
By setting~(\ref{zargham}) in~(\ref{LCMJrate}), we obtain for~$\{\llambk{k}\}$ the asymptotic convergence rate~$\matrixnonviolatingx{\llambconv}\LCMqfx{q}{\smoothfunction}{\llambconv}$, where
\begin{equation}\label{ratezargham}
\LCMqfx{q}{\smoothfunction}{\llambconv}
=
\LCScalefx{\smoothfunction}{\llambconv}
\matrixnonviolatingx{\llambconv}^\trsp
\Scalefx{\smoothfunction}{\llambconv}^{-1}
\globalMqfx{q}{\smoothfunction}{\llambconv}
\matrixnonviolatingx{\llambconv}
\end{equation}
and
$\globalMqfx{q}{\smoothfunction}{\llambconv}\doteq
\newD{\smoothfunction}{\llamb}^{-\frac{1}{2}}
\Scaleterm{\smoothfunction}{\llambconv}^{q+1}
\newD{\smoothfunction}{\llamb}^{\frac{1}{2}}
$
is the asymptotic convergence rate for the unconstrained problem (i.e. $\setoflambdas\equiv\RMM$).
It can be seen
 that $\spectralradius{\LCMqfx{q}{\smoothfunction}{\llambconv}}$ vanishes with growing~$q$. When~$q=0$, (\ref{ratezargham}) reduces  to the rate of~$\mapJfunction$ with coordinate-wise Newton scaling.

\subsection{Random implementations}

We consider the asymptotic convergence of the random algorithm~$\{\mapRkfunction{k}\}$ given in~(\ref{randomimplementation}) and used with probabilities
$\Pseqk{k}\follows\probadistrifunction=\vect{\probadistri{1},...,\probadistri{\nn}}$
for the coordinate directions.
In this context we formulate a strong convexity assumption.
%
%
%
\begin{assumption}
\label{assumptionstrongconvexity}
The function~$\smoothfunction$ is strongly convex so that there exist a symmetric, positive definite block matrix $\boundK=(\boundKij{i}{j})$ 
with $\boundKij{i}{j}\in\REAL{\MMi{i}\times\MMi{j}} $
satisfying $
[\grad\smooth{\llamb}-\grad\smooth{\mmu}]^\trsp(\llamb-\mmu)
\geq 
\specialnorm{\llamb-\mmu}{\boundK}^2
$ for every $\llamb,\mmu\in\setoflambdas$.
\end{assumption}
%
%
%
%

In polyhedral feasible sets, the local convergence of a generated sequen\-{}ce~$\{\llambk{k}\}$  converging to a solution~$\llambconv$ where strict complementarity holds 
occurs in the reduced space at~$\llambconv$. 
In that case we find from~(\ref{coordinateasymptotic}) and for~$k$ large,
$\llambk{k}=\llambconv+ \matrixnonviolatingx{\llambconv} \LClambk{k}$ and
$\llambk{k+1}=\llambconv+ \matrixnonviolatingx{\llambconv} \LClambk{k+1}$, with $\LClambk{k},\LClambk{k+1} \in \REAL{\tildeMM}$ and
\begin{equation}\label{taylorresultrandom}
 \LClambk{k+1} 
\refeq{(\ref{shrinkage})}{=}
\LCMGifx{\Pseqk{k}}{\function}{\llambconv}  \LClambk{k}  
+ \zero{\norm{ \LClambk{k}  }}
.
\end{equation}
The expectation of~(\ref{taylorresultrandom}) in~$\Pseqk{k}$ gives
$
\expectationcond{ \LClambk{k+1} }{\llambk{k},\eventink{k}}
=
\LCMRfx{\function}{\llambconv}  \LClambk{k}  
+ \zero{\norm{ \LClambk{k}  }}
$,
%
where $\LCMRfx{\function}{\llambconv}
\doteq
\tildeidentity - \diag{\probadistri{1}\tildeidentityi{1},...,\probadistri{\nn}\tildeidentityi{\nn}} \LCScalefx{\function}{\llambconv} \hessiantilde{\smoothfunction}{\llamb}$
and~$\eventink{k}$ symbolizes the event that~$\{\llambk{t}\}_{t=k}^{\infty}$ is confined in the reduced space at~$\llambconv$.
In order to derive asymptotic convergence rates for~$\{\mapRkfunction{k}\}$, we need to find out what happens when~$\eventink{k}$ is false, ideally making sure that $[1-\proba{\eventink{k}}] \expectationcond{ \genericresidual{\llambk{k+1}} }{\genericresidual{\llambk{k}} ,\eventoutk{k}}=\zero{\genericresidual{\llambk{k}}}$ for some  residual~$\genericresidual{\cdot}$.
This information can be partially inferred  from the following lemma, which extends to arbitrary distributions a convergence result derived in \cite[Theorem~5]{nesterov12}  for the algorithm known 	as UCDM, which is a version of~$\{\mapRkfunction{k}\}$ using fixed scaling in each direction and equal probabilities $\probadistri{i}=\frac{1}{\nn}$ for all directions $i\in\setofnodes$.
%
%
\obsolete{
This information can be partially  inferred from convergence results for the algorithm known 	as UCDM~\cite{nesterov12}, which is a version of~$\{\mapRkfunction{k}\}$ using fixed scaling in each direction and the probability distribution~$\probadistrifunction$ uniform for the coordinate directions ($\probadistri{i}=\frac{1}{\nn}$ for $i\in\setofnodes$).
Since we believe that the  uniform probability restriction to be be constraining in distributed contexts, these results are reconsidered in this study with arbitrary~$\probadistrifunction$.

In the case of arbitrary probabilities, \cite[Theorem~5]{nesterov12} can be stated as follows.
}%
\begin{lemma}[Convergence  of~$\{\mapRkfunction{k}\}$]\label{lemmalinearconvergence}
%
Assume that  Problem~\ref{initialproblem} has a unique solution~$\llambconv$ and that the feasible set is the Cartesian product $\setoflambdas=\setoflambdasi{1}\times...\times\setoflambdasi{\nn}$.
Consider the cyclic algorithm $\llambk{k+1}=\mapRkfx{k}{\function}{\llambk{k}}$  implemented with 
$\Pseqk{k}\follows\probadistrifunction=\vect{\probadistri{1},...,\probadistri{\nn}}$ at every step~$k$,
the step-size selection rule~(\ref{stocharmijoconditionglobalone}) where~$\Psig\leq\frac{1}{2}$, and fixed scaling~
$\Scalefx{\function}{\llamb}\equiv\constantScale=\diag{\constantScalei{1},...,\constantScalei{\nn}}$ with $\constantScalei{i}\matrixleq
\Lipschitzi{i}^{-1}
$  
for $i\in\setofnodes$.
Define
\begin{equation}\label{supernormfunction}
\supernormfunction:\llamb\in\setoflambdas\functionto
\supernorm{\llamb}
\doteq
\frac{\correctproba{1}{\nn\probamin}}{2}\specialnorm{\llamb-\llambconv}{\smartscale}^2
+  \smooth{\llamb}-\smooth{\llambconv} 
\in\REALplusone \,
,
\end{equation}
where \correctproba{}{$\probamin\doteq \min\{\probadistri{1},...,\probadistri{\nn}\}$ and }%
$\smartscale\doteq[\nn\, \diag{\probadistri{1}\constantScalei{1},...,\probadistri{\nn}\constantScalei{\nn}}]^{-1}$.
For any sequence~$\{\llambk{k}\}$ generated by the algorithm, we have
\begin{equation}\label{sublinearrate}
\expectation{
\smooth{\llambk{k}}-\smooth{\llambconv}
}
\leq
\correctproba{\frac{\nn}{\nn+k}}{\frac{1}{1+\probamin k}}
 \supernorm{\llambk{0}}
,\hspace{5mm}k=0,1,2,... \ .
\end{equation}
If, in addition, $\smoothfunction$ is strongly convex as in Assumption~\ref{assumptionstrongconvexity},
then
\begin{align}\label{linearrate}
&\hspace{10mm}
\expectationcond{
\supernorm{\llambk{k+1}}
}{\llambk{k}}
\leq
\left(1-
\correctproba{\frac{2\minK}{\nn(1+\minK)}}{\frac{2 \probamin\minK}{\minK+\nn\probamin}}
\right)
\supernorm{\llambk{k}}
,
&
k=0,1,2,... ,
\end{align}
where
%
the constant~$\minK>0$ 
satisfies  $\minK\smartscale\matrixleq\boundK$. 
%
%
\end{lemma}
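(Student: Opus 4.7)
The plan is to adapt the technique of \cite[Theorem~5]{nesterov12}, developed for uniform coordinate sampling, to the present non-uniform setting. Because $\constantScalei{i}^{-1} \matrixgeq \Lipschitzi{i}$ and $\Psig \leq \tfrac{1}{2}$, one has $2(1-\Psig)\constantScalei{i}^{-1} \matrixgeq \Lipschitzi{i}$ for every $i\in\setofnodes$, so Proposition~\ref{propositionefficiency} ensures that the Armijo rule returns step size $1$ at every iteration. Combining this with the descent inequality derived from~(\ref{Lequation}) applied along coordinate~$i$ yields the \emph{one-coordinate descent lemma}: whenever $\Pseqk{k}=i$,
\begin{equation}\label{onecorddesc}
\smooth{\llambk{k+1}} \leq \smooth{\llambk{k}} + \gradi{i}\smooth{\llambk{k}}^\trsp(\mu_i - \llambk{k}_i) + \tfrac{1}{2}\specialnorm{\mu_i - \llambk{k}_i}{\constantScalei{i}^{-1}}^2,\ \ \forall\mu_i\in\setoflambdasi{i},
\end{equation}
the right-hand side being minimal at $\mu_i=\llambk{k+1}_i$ by definition of the projection step.

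Taking the conditional expectation of~(\ref{onecorddesc}) over $\Pseqk{k}\sim\probadistrifunction$ and evaluating the test point at $\mu_i = \llambk{k}_i + \alpha(\probamin/\probadistri{i})(\llambconv_i - \llambk{k}_i)$ --- which lies in $\setoflambdasi{i}$ for $\alpha\in[0,1]$ by convexity since $\probamin/\probadistri{i}\leq 1$ --- causes the probability weights to cancel exactly in the gradient term (producing $\alpha\probamin\grad\smooth{\llambk{k}}^\trsp(\llambconv-\llambk{k})$) and reconstitutes the $\smartscale$-norm in the quadratic term by the very definition of~$\smartscale$. Convexity of~$\smoothfunction$ then gives, for every $\alpha\in[0,1]$,
\begin{equation}\label{coreineq}
\expectationcond{\smooth{\llambk{k+1}}-\smooth{\llambconv}}{\llambk{k}} \leq (1-\alpha\probamin)[\smooth{\llambk{k}}-\smooth{\llambconv}] + \tfrac{\alpha^2\probamin^2\nn}{2}\specialnorm{\llambk{k}-\llambconv}{\smartscale}^2.
\end{equation}

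To obtain~(\ref{sublinearrate}), I would complement~(\ref{coreineq}) with a bound on $\expectationcond{\specialnorm{\llambk{k+1}-\llambconv}{\smartscale}^2}{\llambk{k}}$ obtained from the three-point projection identity applied coordinate-wise; this shows that $\supernormfunction$ is a supermartingale, so $\specialnorm{\llambk{k}-\llambconv}{\smartscale}^2$ stays bounded above by $(2/\nn\probamin)\supernorm{\llambk{0}}$. Substituting this uniform bound into~(\ref{coreineq}) and taking $\alpha_k$ of order $1/(1+\probamin k)$ produces~(\ref{sublinearrate}) through the standard estimating-sequences recursion. For~(\ref{linearrate}), I would combine the same two ingredients --- (\ref{coreineq}) and the averaged three-point identity --- into a joint recursion on both components of $\supernorm{\llambk{k}}$, then apply strong convexity in the form $\minK\specialnorm{\llambk{k}-\llambconv}{\smartscale}^2 \leq \specialnorm{\llambk{k}-\llambconv}{\boundK}^2 \leq 2[\smooth{\llambk{k}}-\smooth{\llambconv}]$ to interchange the two summands; the choice $\alpha = \minK/(\minK+\nn\probamin)$ then balances the descent in the function gap against the contraction of the norm part, producing the announced contraction factor $1 - 2\probamin\minK/(\minK+\nn\probamin)$.

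The main obstacle will be the non-uniform bookkeeping: the scaling $\probamin/\probadistri{i}$ in the test point neutralizes the probability weights in the linear part of~(\ref{onecorddesc}) only because $\smartscale$ carries the reciprocal weights $1/(\nn\probadistri{i})$, and the factor $\nn\probamin$ appearing in $\supernormfunction$ together with the constant $2\probamin\minK/(\minK+\nn\probamin)$ in the contraction trace back to this choice. Verifying that the three-point projection identity, once averaged against~$\probadistrifunction$, yields terms that \emph{exactly} compensate those arising from~(\ref{coreineq}) --- so that the Lyapunov recursion on $\supernormfunction$ closes with the stated constants --- is the step requiring the most care; once this compensation is secured, the remainder of the argument reproduces the uniform-probability proof of~\cite{nesterov12} nearly verbatim.
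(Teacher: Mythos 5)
Your setup is right (Proposition~\ref{propositionefficiency} gives unit step sizes, and the one-coordinate descent bound with test points $\mu_i=\llambk{k}_i+\alpha(\probamin/\probadistri{i})(\llambconv_i-\llambk{k}_i)$ is correctly computed), but the route you take from there does not close. The paper's proof never forms your inequality~(\ref{coreineq}); instead it expands $\specialnorm{\llambk{k+1}-\llambconv}{\smartscale}^2$ by the three-point identity, applies the projection optimality condition~(\ref{gradientdescentoptimalitycondition}), the descent inequality~(\ref{Lequation}) and the Armijo condition~(\ref{stocharmijoconditionglobalone}) (this is where $\Psig\le\tfrac12$ and the bound $1/(\nn\probadistri{\Pseqk{k}})\le 1/(\nn\probamin)$ enter), and takes the expectation over $\Pseqk{k}$ to obtain the single master inequality
\begin{equation*}
\expectationcond{\supernorm{\llambk{k+1}}}{\llambk{k}}\;\leq\;\supernorm{\llambk{k}}-\probamin\,\grad\smooth{\llambk{k}}^\trsp(\llambk{k}-\llambconv).
\end{equation*}
Both claims then follow cheaply: convexity, telescoping, and monotonicity of the function values give~(\ref{sublinearrate}), and a weighted combination (weights $2\minK/(\minK+\nn\probamin)$ and its complement) of the two strong-convexity inequalities gives~(\ref{linearrate}).

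The concrete gap is in your derivation of~(\ref{sublinearrate}). Feeding the uniform bound $\specialnorm{\llambk{k}-\llambconv}{\smartscale}^2\le(2/(\nn\probamin))\supernorm{\llambk{0}}$ into~(\ref{coreineq}) yields the scalar recursion $u_{k+1}\le(1-\alpha\probamin)u_k+\alpha^2\probamin$ for $u_k=\expectation{\smooth{\llambk{k}}-\smooth{\llambconv}}/\supernorm{\llambk{0}}$, and no choice of $\alpha\in[0,1]$ propagates the invariant $u_k\le 1/(1+\probamin k)$: minimizing over $\alpha$ leaves a strictly positive excess over the target $1/(1+\probamin(k+1))$ for every $k\ge 0$. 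One only recovers an $O(1/k)$ rate with a degraded constant, because~(\ref{coreineq}) decouples the function gap from the distance term, whereas the stated constant requires the coupled Lyapunov decrement above. Note also that your own auxiliary step --- proving $\supernormfunction$ is a supermartingale via the averaged three-point identity --- already requires establishing precisely the master inequality (with its nonpositive decrement dropped); once you have it, the detour through~(\ref{coreineq}) is unnecessary, and the telescoping argument delivers~(\ref{sublinearrate}) with the exact constant. Your sketch for~(\ref{linearrate}) points at the right balancing but likewise leaves the decisive compensation unverified; carried out via the master inequality it is a two-line computation.
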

The proof is similar to that of~\cite[Theorem~5]{nesterov12} and reported in the Appendix.
%
\obsolete{
\correctproba{%
}{
\begin{remark}
Tightness of the convergence rates is lost in the last line of~(\ref{lossoftightness}), where we use $
\smallexpectation{
{2
}/({\nn\probamin})
}
[\smooth{\llambk{k}}-\smooth{\llambk{k+1}}]
\leq
\smallexpectation{
{2
}/({\nn\probadistri{\Pseqk{k}}})
}
[\smooth{\llambk{k}}-\smooth{\llambk{k+1}}]
$,
the inequality being strict when~$\probadistrifunction$ is not uniform. The rates in~(\ref{sublinearrate}) and~(\ref{linearrate}) are thus overestimators  when~$\probamin\neq\frac{1}{\nn}$. 
\shorten{}{
In the sequel, tight asymptotic rates are computed for any~$\probadistrifunction$ in polyhedra. Only positive definiteness of~$\hessian{\smoothfunction}{\llambconv}$ and strict complementarity at~$\llambconv$ are then required for linear convergence of~$\smooth{\llambk{k}}$.
}
\end{remark}
}%
}%
\shorten{
We are now able to characterize the convergence of the algorithm in polyhedra.
}{}

\obsolete{
\begin{lemma}\label{finitetimecoordinate}
Let Problem~\ref{initialproblem} satisfy Assumptions~\ref{assumptionparallel} and~\ref{assumptiondifferentiability} and have a unique solution~$\llambconv$ where strict complementarity holds.
Consider a sequence~$\{\llambk{k}\}$ generated by the cyclic algorithm $\llambk{k+1}=\mapRkfx{k}{\function}{\llambk{k}}$  implemented with 
$\Pseqk{k}\follows\probadistrifunction=\vect{\probadistri{1},...,\probadistri{\nn}}$ at all~$k$.
If~$\eventink{k}$ symbolizes the event that~$\{\llambk{t}\}_{t=k}^{\infty}$ is confined in the reduced space at~$\llambconv$, i.e. $\activeaffinex{\llambk{t}}\equiv\activeaffinex{\llambconv}$ for~$t\geq k$, then
%
\begin{equation}\label{equationeventin}
\proba{\eventink{k}} \leq ... ,
\end{equation}
where $\proba{\eventink{k}}\to 0$.
\end{lemma}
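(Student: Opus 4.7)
The plan is to reduce the claim to two ingredients: a coordinate-wise extension of the finite-time active-set identification of Proposition~\ref{identificationofthesaturatednonnegativityconstraints}, and the expected-rate bounds of Lemma~\ref{lemmalinearconvergence}. First I would apply Proposition~\ref{identificationofthesaturatednonnegativityconstraints} in each Cartesian factor~$\setoflambdasi{i}$ to obtain a radius~$\ballradius_{i}>0$ such that $\activeaffinex{\mapGifx{i}{\function}{\llamb}}=\activeaffinex{\llambconv}$ whenever $\norm{\llambi{i}-\llambconvi{i}}<\ballradius_{i}$. Setting $\ballradius\doteq\min_{i}\ballradius_{i}$, any iterate~$\llamb\in\setoflambdas$ with $\norm{\llamb-\llambconv}<\ballradius$ produces, after \emph{any} single coordinate step, an iterate whose active set equals~$\activeaffinex{\llambconv}$. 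Consequently the complement of~$\eventink{k}$ is contained in $\bigcup_{t\geq k}\{\norm{\llambk{t}-\llambconv}\geq\ballradius\}$, which reduces the task to controlling the tail probabilities of the iterates.

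Next I would invoke the strongly convex inequality~(\ref{linearrate}) from Lemma~\ref{lemmalinearconvergence} (available under Assumption~\ref{assumptionstrongconvexity}, or locally via positive-definiteness of~$\newtildeD{\smoothfunction}{\llambconv}$ coming from strict complementarity in Assumption~\ref{assumptiondifferentiability}): iterating the one-step contraction gives $\expectation{\supernorm{\llambk{t}}}\leq\eta^{t}\supernorm{\llambk{0}}$ for some $\eta\in(0,1)$. Since $\supernorm{\llamb}\geq\frac{1}{2}\specialnorm{\llamb-\llambconv}{\smartscale}^{2}\geq c\,\norm{\llamb-\llambconv}^{2}$ for some $c>0$, Markov's inequality yields $\proba{\norm{\llambk{t}-\llambconv}\geq\ballradius}\leq\eta^{t}\supernorm{\llambk{0}}/(c\,\ballradius^{2})$. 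A union bound over $t\geq k$ then gives the geometric estimate
\begin{equation*}
1-\proba{\eventink{k}}\;\leq\;\sum\nolimits_{t\geq k}\proba{\norm{\llambk{t}-\llambconv}\geq\ballradius}\;\leq\;\frac{\eta^{k}\,\supernorm{\llambk{0}}}{c\,\ballradius^{2}\,(1-\eta)},
\end{equation*}
which is exactly the type of vanishing bound that the statement~(\ref{equationeventin}) is meant to supply.

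The main obstacle is the coordinate-wise extension of Proposition~\ref{identificationofthesaturatednonnegativityconstraints}: the original proposition concerns the joint mapping~$\mapGTXfunction{\Scale}{\setoflambdas}$, whereas each step of~$\mapRkfunction{k}$ is a projection restricted to a single factor~$\setoflambdasi{i}$. Justifying it rigorously requires rerunning the KKT/strict-complementarity argument with the partial function~$\maplocalix{\function}{i}{\llamb}\in\setoffunctionsn{\MMi{i}}$, using continuity of~$\grad\function$ to absorb the remaining blocks into the required radius; this is the extension already announced in the paragraph preceding Section~\ref{sectionJacobi} but not stated as a lemma. A secondary subtlety is that the geometric decay of~$\expectation{\supernorm{\llambk{t}}}$ is only guaranteed under global strong convexity: if only local strong convexity near~$\llambconv$ is available, one first uses the sublinear rate~(\ref{sublinearrate}) to show that with probability approaching one the iterates reach a deterministic neighborhood on which the reduced Hessian is uniformly positive definite, and only then restarts the geometric argument from that random entry time; merging the two regimes while keeping the overall bound decaying to zero is the delicate step.
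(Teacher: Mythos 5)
You should first note that the paper never actually proves this lemma: it survives only as a discarded fragment whose bound is an ellipsis, and the statement as written ($\proba{\eventink{k}}\to 0$) contradicts the rest of the paper --- since $\eventink{k}\subseteq\eventink{k+1}$ and the iterates converge, the claim must be read as the probability of the \emph{complement} vanishing, i.e. $\proba{\eventink{k}}\to1$, which is what you (correctly) set out to prove and what the paper establishes inline in the proof of the proposition on the asymptotic convergence of $\{\mapRkfunction{k}\}$. Your first ingredient coincides with that argument: a (coordinate-wise) use of Proposition~\ref{identificationofthesaturatednonnegativityconstraints} to produce a radius $\ballradius>0$ such that one step taken from inside the ball $\norm{\llamb-\llambconv}<\ballradius$ lands on the active set $\activeaffinex{\llambconv}$.

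The genuine gap is in the second half. Your union bound $1-\proba{\eventink{k}}\leq\sum\nolimits_{t\geq k}\proba{\norm{\llambk{t}-\llambconv}\geq\ballradius}$ needs summable tail probabilities, which you can only extract from the linear rate~(\ref{linearrate}), i.e. under Assumption~\ref{assumptionstrongconvexity}. The lemma assumes only Assumptions~\ref{assumptionparallel} and~\ref{assumptiondifferentiability}: $\smoothfunction$ is strictly but not necessarily strongly convex, and Assumption~\ref{assumptiondifferentiability} does \emph{not} make $\hessian{\smoothfunction}{\llambconv}$ positive definite, so your proposed fallback (reach a neighbourhood where the reduced Hessian is uniformly positive definite and restart the geometric argument there) has nothing to restart from. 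With only the sublinear rate~(\ref{sublinearrate}) available, the tails are $O(1/t)$ and your series diverges. The paper sidesteps the union bound altogether by exploiting the monotone decrease of $\smooth{\llambk{t}}$ enforced by the Armijo condition~(\ref{stocharmijoconditionglobalone}): once $\llambk{k}$ lies in a sublevel set of $\smoothfunction$ contained in the ball of radius $\ballradius$, \emph{every} later iterate remains there, so the complement of $\eventink{k+1}$ is contained in the single event $\{\smooth{\llambk{k}}\geq\smoothmin\}$, and one application of Markov's inequality with~(\ref{sublinearrate}) yields $\proba{\eventink{k+1}}\geq 1-\supernorm{\llambk{0}}/[(1+\probamin k)(\smoothmin-\smooth{\llambconv})]\to1$. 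Replacing your tail union bound by this monotonicity argument closes the gap (and renders the two-regime construction in your last paragraph unnecessary); the price is that the bound obtained is $O(1/k)$ rather than geometric, but that is all the hypotheses support.
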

\begin{proof}
...

\QEDA
\end{proof}
}%

\begin{proposition}[{Asymptotic convergence  of~$\{\mapRkfunction{k}\}$}]
%
%
Let Assumptions~\ref{assumptionparallel} and~\ref{assumptiondifferentiability} hold for Problem~\ref{initialproblem}.
Consider the cyclic algorithm $\llambk{k+1}=\mapRkfx{k}{\function}{\llambk{k}}$  implemented with 
$\Pseqk{k}\follows\probadistrifunction=\vect{\probadistri{1},...,\probadistri{\nn}}$ at all~$k$,
the step-size selection rule~(\ref{stocharmijoconditionglobalone}) where~$\Psig\leq\frac{1}{2}$, and fixed scaling~
$\Scalefx{\function}{\llamb}\equiv\constantScale=\diag{\constantScalei{1},...,\constantScalei{\nn}}$ with $\constantScalei{i}\matrixleq
\Lipschitzi{i}^{-1}
$  
for $i\in\setofnodes$.
For any sequence~$\{\llambk{k}\}$ generated by the algorithm, $\expectation{\smooth{\llambk{k}}}$ 
converges 
towards~$\smooth{\llambconv}$ with limiting asymptotic rate
\begin{equation}\label{NMRFrate}
\begin{array}{c}
{\NMRFfx{\function}{\llambconv}}\doteq
\spectralradius{
\sum_{i=1}^{\nn}
\probadistri{i} \LCMGifx{i}{\function}{\llambconv}^\trsp \hessiantilde{\smoothfunction}{\llambconv} \LCMGifx{i}{\function}{\llambconv} \hessiantilde{\smoothfunction}{\llambconv}^{-1}
}
\leq 1.
\end{array}
\end{equation}
If~$\hessian{\smoothfunction}{\llambconv}$ is positive definite, then~$\NMRFfx{\function}{\llambconv}<1$.

Moreover, if $\smoothfunction$ is strongly convex  as in Assumption~\ref{assumptionstrongconvexity},
then
$\expectation{\supernorm{\llambk{k}}}$
vanishes at least linearly with limiting asymptotic rate
\begin{equation}\label{NMRSrate}
\begin{array}{c}
{\NMRSfx{\function}{\llambconv}}\doteq
\max\left\{
\NMRFfx{\function}{\llambconv}
,
\spectralradius{
\sum\nolimits_{i=1}^{\nn}
\probadistri{i} \LCMGifx{i}{\function}{\llambconv}^\trsp \tildesmartscale \LCMGifx{i}{\function}{\llambconv} \tildesmartscale^{-1}
}
\right\}
<1,
\end{array}
\end{equation}
where $\tildesmartscale=\matrixnonviolatingx{\llambconv}^\trsp\smartscale\matrixnonviolatingx{\llambconv}$,
and~$\supernormfunction$ and~$\smartscale$ are defined as in~(\ref{supernormfunction}).
\end{proposition}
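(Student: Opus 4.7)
The plan is to combine Lemma~\ref{lemmalinearconvergence} with Proposition~\ref{identificationofthesaturatednonnegativityconstraints} to reduce the analysis to the reduced space at $\llambconv$, then to perform a conditional Taylor expansion over the random direction~$\Pseqk{k}$, and finally to derive the two spectral bounds algebraically.

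Lemma~\ref{lemmalinearconvergence} gives $\expectation{\smooth{\llambk{k}}-\smooth{\llambconv}}\to 0$, and strict convexity of $\smoothfunction$ at $\llambconv$ promotes this to $\llambk{k}\to\llambconv$ in probability. Proposition~\ref{identificationofthesaturatednonnegativityconstraints} then provides a deterministic radius $\ballradius>0$ such that, on an event $\eventink{k}$ with $\proba{\eventink{k}}\to 1$, every iterate $\llambk{t}$ for $t\geq k$ lies in the reduced space and can be written $\llambk{t}=\llambconv+\matrixnonviolatingx{\llambconv}\LClambk{t}$ with $\LClambk{t}\in\REAL{\tildeMM}$. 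On $\eventink{k}$ equation~(\ref{taylorresultrandom}) reads $\LClambk{k+1}=\LCMGifx{\Pseqk{k}}{\function}{\llambconv}\LClambk{k}+\zero{\norm{\LClambk{k}}}$, and since $\matrixnonviolatingx{\llambconv}^\trsp\grad\smooth{\llambconv}=0$, Taylor's theorem gives $\smooth{\llambk{t}}-\smooth{\llambconv}=\tfrac{1}{2}(\LClambk{t})^\trsp\hessiantilde{\smoothfunction}{\llambconv}\LClambk{t}+\zero{\norm{\LClambk{t}}^2}$. Conditioning on $\llambk{k}$ and averaging over $\Pseqk{k}\follows\probadistrifunction$ yields
\begin{equation*}
\expectationcond{\smooth{\llambk{k+1}}-\smooth{\llambconv}}{\llambk{k},\eventink{k}}=\tfrac{1}{2}(\LClambk{k})^\trsp M\LClambk{k}+\zero{\norm{\LClambk{k}}^2},\quad M\doteq\sum\nolimits_{i=1}^{\nn}\probadistri{i}\LCMGifx{i}{\function}{\llambconv}^\trsp\hessiantilde{\smoothfunction}{\llambconv}\LCMGifx{i}{\function}{\llambconv},
\end{equation*}
whose comparison with the Taylor form of $\smooth{\llambk{k}}-\smooth{\llambconv}$ identifies the rate as $\spectralradius{M\hessiantilde{\smoothfunction}{\llambconv}^{-1}}=\NMRFfx{\function}{\llambconv}$.

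The algebraic core is the identity, obtained from $\LCMGifx{i}{\function}{\llambconv}\LClamb-\LClamb=-E_i\LCScalefx{\function}{\llambconv}\hessiantilde{\smoothfunction}{\llambconv}\LClamb$ with $E_i\doteq\diag{0,\dots,\tildeidentityi{i},\dots,0}$ (the $i$-th block selector, commuting with the block-diagonal $\LCScalefx{\function}{\llambconv}$), that for $w_i$ the $i$-th block of $\hessiantilde{\smoothfunction}{\llambconv}\LClamb$,
\begin{equation*}
\LClamb^\trsp\left[\LCMGifx{i}{\function}{\llambconv}^\trsp\hessiantilde{\smoothfunction}{\llambconv}\LCMGifx{i}{\function}{\llambconv}-\hessiantilde{\smoothfunction}{\llambconv}\right]\LClamb=-w_i^\trsp\left[2\LCScaleifx{i}{\maplocalix{\function}{i}{\llambconv}}{\llambconvi{i}}-\LCScaleifx{i}{\maplocalix{\function}{i}{\llambconv}}{\llambconvi{i}}\hessiantildei{\smoothfunction}{ii}{\llambconv}\LCScaleifx{i}{\maplocalix{\function}{i}{\llambconv}}{\llambconvi{i}}\right]w_i.
\end{equation*}
The Lipschitz bound $\hessiani{\smoothfunction}{ii}{\llambconv}\matrixleq\Lipschitzi{i}$ and $\constantScalei{i}\matrixleq\Lipschitzi{i}^{-1}$ imply $\hessiantildei{\smoothfunction}{ii}{\llambconv}\matrixleq\LCScaleifx{i}{\maplocalix{\function}{i}{\llambconv}}{\llambconvi{i}}^{-1}$, so the bracket is bounded below by $\LCScaleifx{i}{\maplocalix{\function}{i}{\llambconv}}{\llambconvi{i}}\matrixg 0$; summing with the weights $\probadistri{i}>0$ gives $\hessiantilde{\smoothfunction}{\llambconv}-M\matrixgeq 0$ and thus $\NMRFfx{\function}{\llambconv}\leq 1$. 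Under $\hessian{\smoothfunction}{\llambconv}\matrixg 0$, positive definiteness of $\hessiantilde{\smoothfunction}{\llambconv}$ forces $\hessiantilde{\smoothfunction}{\llambconv}\LClamb\neq 0$ for $\LClamb\neq 0$ and hence some $w_i\neq 0$; combined with strict positive definiteness of the bracket this yields $\hessiantilde{\smoothfunction}{\llambconv}-M\matrixg 0$ and so $\NMRFfx{\function}{\llambconv}<1$. For the semidefinite case one perturbs $\smoothfunction$ by $\tfrac{\varepsilon}{2}\norm{\llamb}^2$ and lets $\varepsilon\downarrow 0$ as in Proposition~\ref{localconvergenceprop}.

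Under Assumption~\ref{assumptionstrongconvexity} the Lyapunov function $\supernorm{\llambconv+\matrixnonviolatingx{\llambconv}\LClamb}=\tfrac{\nn\probamin}{2}\specialnorm{\LClamb}{\tildesmartscale}^2+\smooth{\llambconv+\matrixnonviolatingx{\llambconv}\LClamb}-\smooth{\llambconv}$ is a sum of two quadratic forms in the reduced space; applying the previous two steps to each summand produces the candidate rates $\spectralradius{\sum_i\probadistri{i}\LCMGifx{i}{\function}{\llambconv}^\trsp\tildesmartscale\LCMGifx{i}{\function}{\llambconv}\tildesmartscale^{-1}}$ and $\NMRFfx{\function}{\llambconv}$, and the overall linear rate of $\expectation{\supernorm{\llambk{k}}}$ is bounded by their maximum, namely $\NMRSfx{\function}{\llambconv}$. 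Strict inequality holds throughout, the first radius because $\tildesmartscale\matrixg 0$ unconditionally and the second because strong convexity forces $\hessian{\smoothfunction}{\llambconv}\matrixg 0$; the equality in the definition of \emph{limiting asymptotic rate} is realized by initializing in the reduced space along a dominant eigenvector of the relevant matrix. The main obstacle is the tail estimate on $\eventoutk{k}$: its contribution must be $\zero{\cdot}$ relative to the dominant rate, which requires combining the Markov bound derived from Lemma~\ref{lemmalinearconvergence} with the monotone decrease of $\smoothfunction$ under the Armijo rule so that $\proba{\eventoutk{k}}$ vanishes at least as fast as the expected residual does.
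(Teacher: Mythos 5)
Your proof is correct in substance and shares the paper's skeleton: identification of the reduced space through Proposition~\ref{identificationofthesaturatednonnegativityconstraints} and the sublinear bound~(\ref{sublinearrate}) so that $\proba{\eventink{k}}\to 1$, a conditional second-order Taylor expansion averaged over~$\Pseqk{k}$, and absorption of the complementary event via the Armijo monotone decrease. Where you genuinely diverge is in how the spectral bounds are obtained. The paper never checks $\NMRFfx{\function}{\llambconv}\leq 1$ or $<1$ by direct computation: it deduces $\NMRSfx{\function}{\llambconv}<1$ (hence $\NMRFfx{\function}{\llambconv}<1$) by contradiction with the global linear contraction~(\ref{linearrate}) of Lemma~\ref{lemmalinearconvergence}, then removes strong convexity with a locally identical strongly convex surrogate, and treats the semidefinite case by $\epsilon$-perturbation. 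You instead prove the operator inequality $\sum_{i}\probadistri{i}\LCMGifx{i}{\function}{\llambconv}^{\trsp}\hessiantilde{\smoothfunction}{\llambconv}\LCMGifx{i}{\function}{\llambconv}\matrixleq\hessiantilde{\smoothfunction}{\llambconv}$ directly from your per-block identity together with $\newtildeD{\smoothfunction}{\llambconv}\matrixleq\LCScalefx{\function}{\llambconv}^{-1}$, which follows from $\constantScalei{i}\matrixleq\Lipschitzi{i}^{-1}$. Your computation is right (the cross terms do collapse onto the $i$-th block of $\hessiantilde{\smoothfunction}{\llambconv}\LClamb$ because the scaling is block diagonal), and it yields a self-contained, arguably more transparent proof of $\NMRFfx{\function}{\llambconv}\leq 1$ and of strictness when the Hessian is positive definite, at the cost of some matrix algebra the paper avoids by leaning on Lemma~\ref{lemmalinearconvergence}.

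One justification does not hold as written: for the second term in~(\ref{NMRSrate}) you claim strict contraction \emph{because $\tildesmartscale\matrixg 0$ unconditionally}. Positive definiteness of the weighting matrix alone is never a reason for $\spectralradius{\sum_i\probadistri{i}\LCMGifx{i}{\function}{\llambconv}^{\trsp}\tildesmartscale\LCMGifx{i}{\function}{\llambconv}\tildesmartscale^{-1}}<1$; the maps $\LCMGifx{i}{\function}{\llambconv}$ are built from $\hessiantilde{\smoothfunction}{\llambconv}$, not from $\tildesmartscale$, so the cross terms do not cancel as they did for the $\hessiantilde{\smoothfunction}{\llambconv}$-weighted sum. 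The claim is nevertheless true under Assumption~\ref{assumptionstrongconvexity} and can be repaired by the same kind of algebra: since the $i$-th diagonal block of $\tildesmartscale$ equals $(\nn\probadistri{i})^{-1}$ times the $i$-th block of $\LCScalefx{\function}{\llambconv}^{-1}$, one finds
\begin{equation*}
\sum\nolimits_{i=1}^{\nn}\probadistri{i}\,\LCMGifx{i}{\function}{\llambconv}^{\trsp}\tildesmartscale\,\LCMGifx{i}{\function}{\llambconv}-\tildesmartscale
=\tfrac{1}{\nn}\left[\hessiantilde{\smoothfunction}{\llambconv}\LCScalefx{\function}{\llambconv}\hessiantilde{\smoothfunction}{\llambconv}-2\,\hessiantilde{\smoothfunction}{\llambconv}\right],
\end{equation*}
which is negative definite because $\hessiantilde{\smoothfunction}{\llambconv}\matrixleq\LCScalefx{\function}{\llambconv}^{-1}$ and $\hessiantilde{\smoothfunction}{\llambconv}\matrixg 0$ under strong convexity; alternatively, simply invoke the contradiction with~(\ref{linearrate}) as the paper does. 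With that repair your argument is complete.
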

\begin{proof}
Let~$\{\llambk{k}\}$ be a sequence generated by the algorithm.
 Proposition~\ref{identificationofthesaturatednonnegativityconstraints} claims that one can find a $\ballradius >0$ such that 
 $\activeaffinex{\llambk{t}}=\activeaffinex{\llambconv}$ for $t\geq k+1$ when $\norm{\llambk{k}-\llambconv}<\ballradius$.
If $\smoothmin\doteq\max\{{\smooth{\llamb}}\setst{\norm{\llamb-\llambconv}<\ballradius,\llamb\in\setoflambdas}\}$ and~$\eventink{k}$ 
is defined as above as the event that $\activeaffinex{\llambk{t}}\equiv\activeaffinex{\llambconv}$ for~$t\geq k$,
it follows from Lemma~\ref{lemmalinearconvergence} that
\begin{equation}
\proba{\eventink{k+1}}\geq \proba{\norm{\llambk{k}-\llambconv}<\ballradius}\geq \proba{\smooth{\llambk{k}}<\smoothmin} \refeq{(\ref{sublinearrate})}{\geq} 1 - \frac{\supernorm{\llambk{0}}}{(1+\probamin k)(\smoothmin-\smooth{\llambconv})} 
.
\end{equation}
%
%
%
Hence $\proba{\eventink{k+1}}\to1$.
From 
 Proposition~\ref{propositionefficiency}, we also know that the step sizes are equal to~$1$, and from~(\ref{coordinateunconstrainedequation}) that, when~$\eventink{k}$ is true, then
$\llambk{k}=\llambconv+ \matrixnonviolatingx{\llambconv} \LClambk{k}$ and
$\llambk{k+1}=\llambconv+ \matrixnonviolatingx{\llambconv} \LClambk{k+1}$
for some $\LClambk{k},\LClambk{k+1} \in \REAL{\tildeMM}$ satisfying~(\ref{taylorresultrandom}).

Consider  the sequence of function values~$\{\functionx{\llambk{k}}\}$ and  a step~$k$.
If~$\eventink{k}$ is true, we have $\grad\smooth{\llambconv}^\trsp(\llambk{k}-\llambconv)=0$.
By setting
$
\Nlambk{k}
\doteq
\hessiantilde{\smoothfunction}{\llambconv}^{\frac{1}{2}}
\LClambk{k}
$
and
$
\NMGifx{i}{\function}{\llambconv}
\doteq
\hessiantilde{\smoothfunction}{\llambconv}^{\frac{1}{2}}
\LCMGifx{i}{\function}{\llambconv}
\hessiantilde{\smoothfunction}{\llambconv}^{-\frac{1}{2}}
$ for $i\in\setofnodes$,
and proceeding as in~(\ref{equationbegin})-(\ref{equationend}), we find
\begin{equation}
\smooth{\llambk{k+1}}
-
\smooth{\llambconv}
=
\frac{1}{2}(\Nlambk{k})^\trsp\NMGifx{\Pseqk{k}}{\function}{\llambconv}^\trsp\NMGifx{\Pseqk{k}}{\function}{\llambconv}\Nlambk{k}
+\zero{\norm{\LClambk{k}}^2}
.
\end{equation}
 Thus,
\begin{align}
\begin{array}{c}
\expectationcond{
\smooth{\llambk{k+1}}
-
\smooth{\llambconv}
}{\llambk{k},\eventink{k}}
\end{array}
\hspace{-17mm}&
\notag
\\
&
\begin{array}{c}
\refeq{(\ref{taylorresultrandom})}{=}
\frac{1}{2}(\Nlambk{k})^\trsp
\sum\nolimits_{i=1}^{\nn}
\probadistri{i}[
\NMGifx{i}{\function}{\llambconv}^\trsp\NMGifx{i}{\function}{\llambconv}
]
\Nlambk{k}
%
+\zero{\norm{\Nlambk{k}}^2}
\end{array}
\\
&
\label{equationNMRFfx}
\begin{array}{c}
\leq
\NMRFfx{\function}{\llambconv}[\smooth{\llambk{k}}-\smooth{\llambconv}]
+\zero{\smooth{\llambk{k}}-\smooth{\llambconv}}
\end{array}
\end{align}
where~$\NMRFfx{\function}{\llambconv}$ is given by~(\ref{NMRFrate}). 
If, however, $\eventink{k}$ is false, then $[\smooth{\llambk{k+1}}-\smooth{\llambconv}]\leq[\smooth{\llambk{k}}-\smooth{\llambconv}]$ by~(\ref{stocharmijoconditionglobalone}).
All in all, we have
\begin{equation}\label{equationdifference}
\expectationcond{
\smooth{\llambk{k+1}}-\smooth{\llambconv}
}{\llambk{k}}
\leq
\NMRFfx{\function}{\llambconv}
[\smooth{\llambk{k}}-\smooth{\llambconv}]
+\differencek{k}
+
\zero{\smooth{\llambk{k}}-\smooth{\llambconv}},
\end{equation}
where $\differencek{k}=[1-\proba{\eventink{k}}][1-\NMRFfx{\function}{\llambconv}]
[\smooth{\llambk{k}}-\smooth{\llambconv}]=
\zero{\smooth{\llambk{k}}-\smooth{\llambconv}}$,
and the rate~$\NMRFfx{\function}{\llambconv}$ is tight. 
\obsolete{
\begin{equation}
\NMRFfx{\function}{\llambconv}\doteq
\hessiantilde{\smoothfunction}{\llambconv}^{-\frac{1}{2}} 
\left[
\sum\nolimits_{i=1}^{\nn}
\probadistri{i}
\LCMGifx{i}{\function}{\llambconv}^\trsp \hessiantilde{\smoothfunction}{\llambconv} \LCMGifx{i}{\function}{\llambconv}
\right]
\hessiantilde{\smoothfunction}{\llambconv}^{-\frac{1}{2}}
.
\end{equation}
}%

Let Assumption~\ref{assumptionstrongconvexity} hold.
Similarly, one can write
$
\specialnorm{\llambk{k+1}-\llambconv}{\smartscale}^2
=
\specialnorm{\LClambk{k+1}}{\tildesmartscale}^2
$ when~$\eventink{k}$ is true. 
Using~(\ref{taylorresultrandom}) and~(\ref{equationNMRFfx}), one finds 
\begin{align}\label{expsupernorm}
\expectationcond{
\supernorm{\llambk{k+1}}
}{\llambk{k}}
&
\refeq{(\ref{linearrate})}{\leq}
\NMRSfx{\function}{\llambconv}
\supernorm{\llambk{k}}
+\differencetwok{k}
+\zero{\supernorm{\llambk{k}}},
\end{align}
where $\differencetwok{k}=[1-\proba{\eventink{k}}][1-
\correctproba{{2\minK}{(\nn+\minK\nn)^{-1}}}{{2 \probamin\minK}({\minK+\nn\probamin})^{-1}}
-\NMRSfx{\function}{\llambconv}]
\supernorm{\llambk{k}}=
\zero{\supernorm{\llambk{k}}}$,
and~$\NMRSfx{\function}{\llambconv}$ is given by~(\ref{NMRSrate}) and tight. 
It follows from Lemma~\ref{lemmalinearconvergence} and~(\ref{linearrate}) that $\NMRSfx{\function}{\llambconv}<1$, and thus $\NMRFfx{\function}{\llambconv}<1$.
Otherwise there would exist a vector $\mmuk{0}=\llambconv+ \matrixnonviolatingx{\llambconv} \epsilon\LCmu\in\setoflambdas$ such that $\NMRSfx{\function}{\llambconv}\geq1$ and~(\ref{expsupernorm}) holds with equality sign, which 
contradicts~(\ref{linearrate}) if we take~$\epsilon$ small enough.
%

Assume now that~$\smoothfunction$ is not necessarily strongly convex, yet~$\hessian{\smoothfunction}{\llambconv}$ is positive definite.
Because~$\NMRFfx{\function}{\llambconv}$ depends only on local properties of~$\smoothfunction$, applying the same algorithm within~$\setoflambdas$ to a strongly convex function~$\smoothtwofunction$ with the same derivative and Hessian as~$\smoothfunction$ in a neighborhood of~$\llambconv$ will see $\smallexpectation{\smoothtwo{\llambk{k+1}}-\smoothtwo{\llambconv}}$ converge with  asymptotic rate~$\NMRFfx{\function}{\llambconv}<1$. 
%
When~$\hessian{\smoothfunction}{\llambconv}$ is positive semi-definite, we find $\NMRFfx{\function}{\llambconv}\leq1$ by considering the function $\smoothfunction+\frac{\epsilon}{2}\norm{\llamb}^2$ and using the same continuity arguments as in the proof of Proposition~\ref{localconvergenceprop}.
%
\QEDA
\end{proof}
\begin{remark}
Let us compare the rates given in~(\ref{linearrate}) and~(\ref{NMRSrate}). For this purpose we  place ourselves in the conditions which optimize the precision of~(\ref{linearrate})
by supposing~(i) that the bound~$\minK$ is tight, in the sense that 
$\minK\smartscale\matrixleq\boundK$ is satisfied with equality sign (thus implying that~$\boundK$ is block diagonal), (ii) that~$\hessian{\smoothfunction}{\llambconv}$ is defined and equal to~$\boundK$ 
so that the strong convexity constant~$\boundK$ is itself tight and we have the constraint $\minK\leq\nn\probamin$ imposed by $\constantScale\matrixleq
\Lipschitz^{-1}\matrixleq\boundK^{-1}$, and (iii)  that $\activeaffinex{\llambconv}=\emptyset$.
After computations, we find $\NMRSfx{\function}{\llambconv}=1-\frac{\minK}{\nn}(2-\frac{\minK}{\nn\probamin})$. The ratio with the rate~(\ref{linearrate}) gives 
\begin{equation}\label{equationdifference}
\frac{1-\NMRSfx{\function}{\llambconv}}{1-\left(1-\frac{2 \probamin\minK}{\minK+\nn\probamin}\right)}
=
1+\frac{\minK(\nn\probamin-\minK)}{2(\nn\probamin)^2}\geq 1.
\end{equation}
It can be inferred from~(\ref{equationdifference}) that the rate~(\ref{linearrate}) is (for this problem) generally conservative, and equal to the asymptotic rate~$\NMRSfx{\function}{\llambconv}$ iff $\minK=\nn\probamin$ holds, i.e., 
when 
when we use a scaled version $\constantScalei{i}=\frac{\probamin}{\probadistri{i}}\hessiani{\smoothfunction}{ii}{\llambconv}^{-1}$ ($i\in\setofnodes$) of the asymptotic expression of coordinate-wise Newton scaling approach  previously discussed in Remark~\ref{sectionnewton}%
---in that case the convergence rate reduces to $1-\probamin$.
\end{remark}

\subsection{Non-twice differentiable cost functions}

In the previous sections we have assumed that~$\hessianfunction{\smoothfunction}$ existed 
at the point of convergence~$\llambconv$. 
Suppose instead that~$\hessian{\smoothfunction}{\llambconv}$ is not defined but that~$\smoothfunction$ satisfies a strong convexity condition at least locally in a neighborhood~$\nbhd$ of~$\llambconv$, i.e. there is a symmetric, positive definite matrix~$\boundK$ such that $
[\grad\smooth{\llamb}-\grad\smooth{\mmu}]^\trsp(\llamb-\mmu)
\geq
\specialnorm{\llamb-\mmu}{\boundK}^2
$ holds for $\llamb,\mmu\in\nbhd$.

Under Assumption~\ref{assumptionparallel}, consider any algorithm based on~$\{\maphatGi{i}\}_{i=1}^{\nn}$, such as those introduced in Section~\ref{sectionparalleloptimization}, and generate a
 sequence~$\{\llambk{k}\}$ with step-size selection rule~(\ref{stocharmijoconditionglobalone}) asymptotically efficient in the sense of Proposition~\ref{propositionefficiency}. 
Assume that strict complementarity holds at~$\llambconv$, so that convergence occurs in the reduced space at~$\llambconv$ and, for large~$k$, we have
$\llambk{k} = \llambconv + \matrixnonviolatingx{\llambconv} \LClambk{k}  $ with $  \LClambk{k} \in \REAL{\tildeMM} $.
In view of Remark~\ref{remarkswitchingsystem} and by considering directional derivatives of~$\grad\smoothfunction$ in the developments that lead to~(\ref{coordinateasymptotic}), we find
\begin{equation}
\maphatGifx{i}{\function}{\llambk{k}}-\llambconv=\NDGki{k}{i}  (\llambk{k}-\llambconv)+\zero{\norm{\llambk{k}-\llambconv}}
\end{equation}
where 
$\NDGki{k}{i}\doteq \matrixnonviolatingx{\llambconv} [ \tildeidentity - \diag{0,...,0,\tildeidentityi{i},0,...,0} \LCScalefx{\function}{\llambconv} 
\matrixnonviolatingx{\llambconv}^{\trsp}
\NDMki{k}{i}
\matrixnonviolatingx{\llambconv}
]
$
for some matrix $\NDMki{k}{i}\in\NDMset$,
where~$\NDMset$ denotes the set of all the symmetric matrices~$\genericnormscale$ satisfying $\boundK\matrixleq\genericnormscale\matrixleq\Lipschitz\}$.
Suppose now that, for any strongly convex function~$\smoothtwofunction$ which realizes its minimum on~$\setoflambdas$ at~$\llambconv$ and satisfies Assumption~\ref{assumptiondifferentiability}, the algorithm produces sequences~$\mmuk{k}$  linearly convergent towards~$\llambconv$ with respect to some residual~$\genericresidual{\mmuk{k}}$ and with rate~$\NDHx{\hessian{\smoothtwofunction}{\llambconv}}$, i.e.
\begin{equation}
\genericresidual{\mmuk{k+1}}\leq \NDHx{\hessian{\smoothtwofunction}{\llambconv}} \genericresidual{\mmuk{k}} + \zero{\genericresidual{\mmuk{k}}},
\end{equation}
where $\spectralradius{\NDHx{\genericnormscale}}<1$ for any $\genericnormscale\in\NDMset$, and~$\NDHx{\cdot}$ is a continuous mapping. 
It follows from the compactness of~$ \NDMset $
that we can find a matrix $\NDHmax\in\NDMset$ such that
$\spectralradius{\NDHmax}=
\max_{\genericnormscale\in\NDMset}\{\spectralradius{\NDHx{\genericnormscale}}\}<1$ 
and
\begin{equation}
\genericresidual{\llambk{k+1}}\leq \NDHmax \genericresidual{\llambk{k}} + \zero{\genericresidual{\llambk{k}}}.
\end{equation}

%
%
%
%
%
\obsolete{
For any matrix $\genericblock=(\genericblockij{i}{j})$ of~$ \REAL{\tildeMM\times\tildeMM}$ with $\genericblockij{i}{j}\in\REAL{\tildeMMi{i}\times\tildeMMi{j}}$ for $i,j\in\setofnodes$ and such that $\genericblock=\genericD-\genericL-\genericL^\trsp$
for 
$\genericD=\diag{\genericblockij{1}{1},...,\genericblockij{\nn}{\nn}}$  
block diagonal and~$\genericL$ strictly lower block diagonal, we define
\begin{align}
\label{genericLCMrate}
\LCMgeneric{\genericblock} 
&\doteq 
\left
\lbrack \LCScalefx{\function}{\llambconv}^{-1} -  \genericL \right\rbrack^{-1}  \left\lbrack \LCScalefx{\function}{\llambconv}^{-1} - \genericD + \genericL^\trsp \right\rbrack,
\\
\LCMRgeneric{\genericblock} 
&\doteq
\tildeidentity -
\diag{\probadistri{1}\tildeidentityi{1},...,\probadistri{\nn}\tildeidentityi{\nn}} \LCScalefx{\function}{\llambconv} \genericblock
\end{align}

}

\subsection{Stochastic optimization based on gradient projections}

Some stochastic optimization problems are  concerned with the minimization of a  cost function  unknown in closed form that  can only be estimated through measurement or simulation.
Assume  in Problem~\ref{initialproblem} that~$\smoothfunction$ is unknown, while a sequence~$\{\functionk{k}\}$ of estimates in~$\setoffunctionsn{\MM}$ is available for~$\function$ with common Lipschitz constant for every~$\grad\smoothkfunction{k}$,
and that~$\{\functionk{k}\}$  converges  almost surely towards~$\function$ 
in the sense that
$
\lbrack \sup\nolimits_{\llamb\in\compact}\singlenorm{\smoothkx{k}{\llamb}-\smooth{\llamb}}
+
\sup\nolimits_{\llamb\in\compact}\norm{\grad\smoothkx{k}{\llamb}-\grad\smooth{\llamb}}
\rbrack
$ 
vanishes almost surely
 for any compact set~$\compact\subset\setoflambdas$.
An approach for solving this problem consists of sequentially applying  an iterative optimization algorithm~$\mapfunction$ along the  sequence of function estimates, i.e.
\begin{equation}\label{genericalgorithm}
\llambk{k+1}=\mapfx{\functionk{k}}{\llambk{k}}
,\hspace{5mm}k=0,1,2,...\ .
\end{equation}
The bounded sequences~$\{\llambk{k}\}$ generated by~(\ref{genericalgorithm}) are known to converge almost surely towards a nonempty solution set  provided that~$\mapfunction$ is \emph{closed} and a \emph{descent algorithm} with respect to~$\function$ and the  set of solutions~\cite[Theorem~2.1]{shapiro96}.
Possible choices for~$\mapfunction$ include the gradient projection algorithm~$\mapGTXfunction{\Scale}{\setoflambdas}$ and (under Assumption~\ref{assumptionparallel}) the parallel implementations of Section~\ref{sectionparalleloptimization}, whose convergence  in stochastic settings is specifically addressed in~\cite{bilenne15parallel}. 
%

%
Consider such an algorithm~$\mapfunction$, and suppose that strict complementarity holds at~$\llambconv$ (Assumption~\ref{assumptiondifferentiability})  and 
that each function~$\smoothkfunction{k}$ 
has a unique minimizer~$\accuk{k}$  on~$\setoflambdas$  where $\hessianfunction{\smoothkfunction{k}}$ is defined, continuous  and positive definite at~${\accuk{k}}$.
%
By Lipschitz continuity of~$\function$, the sequence~$\{\accuk{k}\}$ is such that $\activeaffinex{\accuk{k}}\to\activeaffinex{\llambconv}$, i.e. $\accuk{k}=\accu+\matrixnonviolatingx{\llambconv}\LCaccuk{k}$ for $\LCaccuk{k}\in\REAL{\tildeMMi{i}}$ and, say, $k>\ksc$, with strict complementarity holding at~$\accuk{k}$ for~$\smoothkfunction{k}$.
Assume that the considered  algorithm~$\mapfunction$ produces,  when applied to any~$\smoothkfunction{k}$ with $k>\ksc$,  sequences in~$\setoflambdas$  converging towards~$\accuk{k}$ in the subspace at~$\llambconv$ with rate $\LCMgenfx{\functionk{k}}{\accuk{k}} <1$. 
Any bounded sequence~$\{\llambk{k}\}$ generated by~(\ref{genericalgorithm}) will then be such that
%
\obsolete{
, for any generating a  vector sequence~$\{\llambk{k}\}$ which,  almost surely, converges to~$\llambconv$ 
reaching in finite time the reduced space at~$\llambconv$.
By extension of Propositions~\ref{identificationofthesaturatednonnegativityconstraints} and~\ref{unconstraineddescent} and by application of  Taylor's theorem---recall~(\ref{taylorresult})---, we find  
}%
$\llambk{k} = \llambconv + \matrixnonviolatingx{\llambconv} \LClambk{k}  $ and $\llambk{k+1} = \llambconv + \matrixnonviolatingx{\llambconv} \LClambk{k+1}  $ for~$k$ large enough and for some
$  \LClambk{k}, \LClambk{k+1} \in \REAL{\tildeMM} $ satisfying
\begin{equation}
 \LClambk{k+1} -\LCaccuk{k}
  = \LCMgenfx{\functionk{k}}{\accuk{k}}  (\LClambk{k}-\LCaccuk{k})  + 
\fullremainderk{k}{\LClambk{k}}
, \label{equationM}
\end{equation}
where $\fullremainderk{k}{\LClamb}=\zero{\norm{\LClamb-\accuk{k}}}$ for  $k\geq\ksc$.
Further, if $\function$  and all~$\functionk{k}$ are  smooth, the scale~$\Scalefx{\function}{\cdot}$ of~$\mapfunction$ is continuously differentiable at~$\llambconv$, and, almost surely, $\hessianfunction{\functionk{k}}$ and its derivatives  converge uniformly on a neighborhood of~$\llambconv$ towards~$\hessianfunction{\function}$ and its derivatives, respectively, then
$\fullremainderk{k}{\LClambk{k}}\equiv\remainderk{k}{\LClambk{k}} (\LClambk{k}-\LCaccuk{k})(\LClambk{k}-\LCaccuk{k})^\trsp$,
where~$\remainderk{k}{\LClambk{k}}$ is a function of  derivatives at~$\accu$ of~$\hessianfunction{\function}$ and~$\Scalefx{\function}{\cdot}$ and uniformly bounded for all $k\geq\ksc$ on a neighborhood of~$\accu$ in accordance with Proposition~\ref{unconstraineddescent}.
\obsolete{
\obsolete{
where~$\tildeMM$ is the dimension of the reduced space at~$\llambconv$,
$\matrixnonviolatingx{\llambconv}$ is defined as in Section~\ref{sectionpolyhedra}, $\LCaccuk{k}\doteq\matrixnonviolatingx{\llambconv}^\trsp(\accuk{k}-\llambconv)$, $\LCMgenfx{\functionk{k}}{\accuk{k}} $ is asymptotic rate of convergence towards~$\accuk{k}$  when~$\mapfunction$ is applied to~$\functionk{k}$, and
}%
the remainder~$\remainderk{k}{\llambk{k}} 
$
%
 is a function of second derivatives of~$\functionk{k}$. 
If~$\{\hessianfunction{\functionk{k}}\}$ converges uniformly towards~$\hessianfunction{\function}$ on a {neighborhood} of~$\accu$,  then~$\remainderk{k}{\llambk{k}}$ is uniformly bounded  
}%
Then, (\ref{equationM}) rewrites (with probability one) as
\begin{equation}\label{markovchain}
\llambk{k+1} -\accu
  = 
\markovAk{k}
(\llambk{k} -\accu)
 +
\markovBk{k}
(\accuk{k} -\accu)
+
\zero{\norm{\llambk{k} -\accu }}
, 
\end{equation}
where 
$\markovAk{k}=\matrixnonviolatingx{\accu}\LCMgenfx{\functionk{k}}{\accuk{k}}  \matrixnonviolatingx{\accu}^\trsp$,
and
$\markovBk{k}=\matrixnonviolatingx{\accu}(\tildeidentity-\LCMgenfx{\functionk{k}}{\accuk{k}}) \matrixnonviolatingx{\accu}^\trsp$. 

The asymptotics  of~$\{\accuk{k}-\accu\}$ ensue from the nature of the  function sequence~$\{\functionk{k}\}$.
In many problems, $\function$ 
is an expectation function of the type
\begin{equation} \label{stochdualfunction}
\begin{array}{c}
\functionx{\llamb} = \int\nolimits_{\Stoch} { \stochg{\llamb}{\stoch} }   \distri{d\stoch},
\hspace{5mm} \forall \llamb \in \RMM, 
\end{array}
\end{equation}
%
%
where~$\stoch$ is a random parameter defined on a probability space~$\probabilityspace{\Stoch}{\mathcal{F}}{\distrifunction}$,  and~$\stochg{\cdot}{\stoch}$ serves as a random measurement of~$\function$, modeling for instance the optimal value of the second-stage problem of a two-stage stochastic program~\cite{shapiro09}.
%
%
%
Based on~(\ref{stochdualfunction}) and the simulation of a sequence of samples~$\{\{\stochkq{k}{l}\}_{l=\SA{0}{1}}^{\qk{k}\SA{-1}{}}\}$  of independent {realizations} of~$\stoch$, with $\qk{k}\to\infty$ as $k\to\infty$, it is common to consider the \emph{sample average approximation (SAA)} 
\begin{equation}\label{empiricestimator}
\functionkx{k}{\llamb}=\frac{1}{\qk{k}}\sum\nolimits_{l=\SA{0}{1}}^{\qk{k}\SA{-1}{}}\stochg{\llamb}{\stochkq{k}{l}}
,\hspace{5mm} k=0,1,2,...\ \shortlong{,}{.}
\end{equation}
which  converges almost surely and uniformly   towards~$\thefunction$ on any compact set~$\compact\subset\setoflambdas$ under certain continuity and integrability conditions for~$\stochgfunction$~\cite{rubinstein93}.
%
%
%
%
%
The sequence~$\{\accuk{k}\}$ is then known as the (SAA) estimator, 
and it follows from the central limit theorem that~(\ref{empiricestimator}) is asymptotically normal, 
i.e.
\begin{equation}\label{NRV}
\qk{k}^{-\frac{1}{2}}
[\functionkx{k}{\llamb}-\functionx{\llamb})]\distrito\NRVx{\llamb}
,\hspace{5mm} \forall \llamb\in\setoflambdas
,\end{equation}
where~$\distrito$ denotes convergence in distribution and~$\NRVx{\llamb}$ is a centered normal random variable with variance $\varstochg{\llamb}=\Var{\stochg{\llamb}{\stoch}}$.
%
Since the hypotheses of~\cite[Theorem~5.8]{shapiro09} are then satisfied at~$\accu$,  the first order asymptotics of the SAA estimator~$\accuk{k}$ can be inferred from the second order Taylor series expansion of~$\function$ at~$\accu$ and the Delta theorem, 
%
%
%
and we find
\begin{equation}\label{SAA}
\qk{k}^{-\frac{1}{2}}[\accuk{k} -\accu]\distrito
-\matrixnonviolatingx{\llambconv}\hessiantilde{\function}{\accu}^{-1}\matrixnonviolatingx{\llambconv}^\trsp\grad\NRVx{\accu},
 \end{equation}
where $\hessiantilde{\smoothfunction}{\llamb}\doteq \matrixnonviolatingx{\llambconv}^\trsp   \hessian{\smoothfunction}{\llamb} \matrixnonviolatingx{\llambconv}$.
%
%
%
%
\obsolete{
provided that
$\arg\inf_{\LCvariation\in\REAL{\tildeMM}}\{2 \LCvariation^\trsp\matrixnonviolatingx{\llambconv}^\trsp\grad\dirx{\accu}+\LCvariation^\trsp\hessiantilde{\function}{\accu}\LCvariation\}
$
yields a singleton~$\{\secondderivdir{\dir}\}$ for every $\dir\in\setoffunctionsn{\MM} $, where 
\obsolete{
$\critical{\accu}=\{\matrixnonviolatingx{\llambconv} \LCvariation\setst\LCvariation\in\REAL{\tildeMM}\}$
is the {critical cone} at~$\accu$.
}%
$\hessiantilde{\smoothfunction}{\llamb}\doteq \matrixnonviolatingx{\llambconv}^\trsp   \hessian{\smoothfunction}{\llamb} \matrixnonviolatingx{\llambconv}$.
In that case we have
\begin{equation}\label{SAA}
\qk{k}^{-\frac{1}{2}}[\accuk{k} -\accu]\distrito\matrixnonviolatingx{\llambconv}\secondderivdir{\NRV}.
 \end{equation}
%
Equation~(\ref{SAA}) holds in particular when~$\hessianfunction{\function}$ is positive definite. Then 
we find
$
\secondderivdir{\dir}
=-\hessiantilde{\function}{\accu}^{-1}\matrixnonviolatingx{\llambconv}^\trsp\grad\dirx{\accu}
$
for every $\dir\in\setoffunctionsn{\MM} $.
}%

We see from~(\ref{markovchain}) and~(\ref{SAA}) that the convergence of the sequence~$\{\llambk{k}\}$ 
is then asymptotically analogous to that of a discrete-time random dynamical system characterized by (i)~the affine mapping
sequence~$\{\markovAk{k}\}$, which converges almost surely towards the asymptotic convergence rate $\markovAk{\infty}=\matrixnonviolatingx{\accu}\LCMgenfx{\function}{\accu}  \matrixnonviolatingx{\accu}^\trsp$ of the (typically \emph{linearly} convergent) algorithm~$\mapfunction$, and (ii)~a random noise process with variance vanishing \emph{sublinearly} like~$\magnitude{\qk{k}^{-1}}$, thus hindering the whole optimization process and dictating its actual asymptotic performance. 
\begin{remark}
The impact of variance of the SAA estimator can be  lessened  using variance reduction~\cite{shapiro96conf,shapiro98,shapiro09} or scenario reduction techniques~\cite{romisch09}. Reducing the computational charge due to sample averaging is  possible for instance by  controlling the sample generation process~\cite{dupuis91}, or by synchronizing---possibly in parallel---the application of the descent algorithm~(\ref{genericalgorithm}) with the increasing precision of~$\{\functionk{k}\}$~\cite{bilenne15parallel}.
\end{remark}

\jotaIEEE{
\appendix
\section*{Appendix: proofs and auxiliary results}
}{
\appendices
\section{}
}

\begin{proof}[Proof of Proposition~\ref{propositionefficiency}]
Consider any~$\notx\in\genericset$ and  the gradient projection  $\mapGTXfx{\Scale}{\genericset}{\GPfunction}{\notx}$ with a tentative step size $\Pa\in(0,1]$. We have
\begin{align}
\GPfunctionx{\notx}-\GPfunctionx{\searchfxa{\GPfunction}{\notx}{\Pa}}
&
\refeq{(\ref{Lequation})}{\geq}
-\grad\GPfunctionx{\llamb}^\trsp(\searchfxa{\GPfunction}{\notx}{\Pa}-\notx)-\frac{1}{2}\specialnorm{\searchfxa{\GPfunction}{\notx}{\Pa}-\notx}{\Lipschitz}^2
\\
&
\refeq{(\ref{gradientdescentoptimalitycondition})}{\geq}
(\searchfxa{\GPfunction}{\notx}{\Pa}-\notx)^\trsp{
\genericnormscale
}
(\searchfxa{\GPfunction}{\notx}{\Pa}-\notx)
\end{align}
with $\genericnormscale\doteq[\Pa\Scalefx{\GPfunction}{\notx}]^{-1}-\frac{\Lipschitz}{2}$, and by~(\ref{conditionefficiencynonlinear})  the condition~(\ref{stocharmijoconditionglobalone}) is satisfied for $\Pa=1$.

Suppose now that~$\Scalefx{\GPfunction}{\cdot}$ is continuous and~$\GPfunction$ is twice continuously differentiable in a neighborhood~$\nbhd$ of~$\notxconv$.
%
 %
%
Taylor's theorem yields
\begin{equation}\label{taylorstepsize}
\GPfunctionx{\notx}-\GPfunctionx{\noty}
=
-\grad\GPfunctionx{\llamb}^\trsp(\noty-\notx)-\frac{1}{2}\specialnorm{\noty-\notx}{\hessian{\GPfunction}{\notx}}^2
+ \zero{\norm{\noty-\notx}^2}
,
\hspace{1mm}
\forall \notx,\noty\in\nbhd.
\end{equation}
Consider the sequence~$\{\notxk{k}\}$ converging to~$\notxconv$ and the sequence~$\{\notyk{k}\}$ such that $\notyk{k}=\mapGTXfx{\Scale}{\genericset}{\GPfunction}{\notxk{k}}$ with step size~$1$ for all~$k$.
Since $\mapGTXfx{\Scale}{\genericset}{\GPfunction}{\notxconv}=\notxconv$ for any step size by stationarity of~$\notxconv$, we find that $\notyk{k}\to\notxconv$ by continuity of~$\mapGTXfunction{\Scale}{\genericset}$. Thus, for~$k$ large enough, $\notxk{k},\notyk{k}\in\nbhd$, and
it follows from~(\ref{gradientdescentoptimalitycondition}) and~(\ref{taylorstepsize}) that
\begin{equation}
\begin{array}{c}
\GPfunctionx{\notxk{k}}-\GPfunctionx{\notyk{k}}
\geq
\specialnorm{\notyk{k}-\notxk{k}}{
\genericnormscaletwo
}^2
+\zero{\norm{\notyk{k}-\notxk{k}}^2}
\end{array}
\end{equation}
with $\genericnormscaletwo\doteq\Scalefx{\GPfunction}{\notxk{k}}^{-1}-\frac{1}{2}\hessian{\GPfunction}{\notxk{k}}$. By~(\ref{conditionefficiencynonlinear}) and continuity arguments, (\ref{stocharmijoconditionglobalone}) is satisfied at~$\notxk{k}$ for large~$k$ if $\Pa=1$, i.e. $\notxk{k+1}\equiv\notyk{k}$. Hence $\mapAfx{\GPfunction}{\notxk{k}}\to 1$.
\QEDA
\end{proof}

\begin{proof}[Proof of Proposition~\ref{identificationofthesaturatednonnegativityconstraints}]
%
%
By strict complementarity at~$\llambconv$ we know that~(\ref{KKTsc}) is satisfied with coefficients $\{\CLcoefj{j}\}_{j\in\activeaffinex{\llambconv}}$ all positive.
For any~$\ballradius>0$, denote by $\neibradius{\ballradius}\doteq\{\llamb\in\setoflambdas\setst\norm{\llamb-\llambconv}\leq\ballradius\}$ a neighborhood of~$\llambconv$ in~$\setoflambdas$.
We first show that one can find a $\ballradius>0$ such that
$\activeaffinex{\llamb} \subset \activeaffinex{\llambconv} $ for any $\llamb\in\neibradius{\ballradius}$.
Otherwise there would be $j\in\{1,...,\affinen\}\setminus \activeaffinex{\llambconv}$  and a 
sequence~$(\mmuk{k})$ in~$\setoflambdas$ converging towards~$\llambconv$ such that $\cstjx{j}{\mmuk{k}} = 0$ for all~$k$. By continuity of~$\cstj{j}$, we would find $\cstjx{j}{\llambconv} = 0$ and thus $\cstj{j}\in\activeaffinex{\llambconv}$, which is a contradiction.
Since the proposition becomes trivial if $\activeaffinex{\llambconv} = \emptyset$, we suppose in the rest of the proof that $\activeaffinex{\llambconv} \neq \emptyset$, and thus $\specialnorm{\gradi{i} \smooth{\llambconv}}{} \neq 0$ by strict complementarity at~$\llambconv$.

\obsolete{
First notice that there exists a $\kscbis<\infty$ such that $\activeaffinex{\llambk{k}} \subset \activeaffinex{\llambconv} $ for $k\geq\kscbis$.
Otherwise there would be $j\in\{1,...,\affinen\}\setminus \activeaffinex{\llambconv}$  and a subsequence~$(\llambk{k})_{k\in K}$ such that $\cstjx{j}{\llambk{k}} = 0$ for $k\in K$. 
Since $\llambk{k}\to \llambconv$ and by continuity of~$\cstj{j}$, we would find $\cstjx{j}{\llambconv} = 0$ and thus $\cstj{j}\in\activeaffinex{\llambconv}$, which is a contradiction.
Now, if $\activeaffinex{\llambconv} = \emptyset$, then $\activeaffinex{\llambk{k}}\to \emptyset$ and we are done.
Therefore we suppose in the rest of the proof that $\activeaffinex{\llambconv} \neq \emptyset$, and thus $\specialnorm{\gradi{i} \smooth{\llambconv}}{} \neq 0$ by strict complementarity at~$\llambconv$.
}%

Consider a  point $\llamb\in\setoflambdas$ where $\activeaffinex{\llamb} = \activeaffinespecial $ with $\activeaffinespecial \subset \activeaffinex{\llambconv}$ and $\activeaffinespecial \neq \activeaffinex{\llambconv}$.
The affine constraints can be rewritten as $ \cstjx{j}{\llamb} = \grad \cstjx{j}{\llambconv}^\trsp (\llamb-\llambconv)  $   for all $\llamb \in \setoflambdas$ and $j\in\activeaffinex{\llambconv}$. 
We find
\begin{equation}\label{eqcontradiction}
\begin{array}{c}
 \sum\nolimits_{j\in \activeaffinex{\llambconv} } \CLcoefj{j}   \cstjx{j}{\llamb}  
 = 
\left[ \sum\nolimits_{j\in \activeaffinex{\llambconv} }  \CLcoefj{j}  \grad \cstjx{j}{\llambconv}^\trsp \right] (\llamb-\llambconv) 
\refeq{(\ref{KKTsc})}{=}
- \grad \smooth{\llambconv}^\trsp (\llamb-\llambconv) 
.
\end{array}
\end{equation}
Assume that $\grad \smooth{\llambconv}$
is a linear combination of elements of~$\{ \grad \cstjx{j}{\llambconv} \}_{j\in\activeaffinespecial}$.
We have $\cstjx{j}{\llamb}   = 0  $ for $j \in \activeaffinespecial$ and the expression in~(\ref{eqcontradiction}) is equal to~$0$.
Since $   \cstjx{j}{\llamb}  < 0  $ for $j \in \activeaffinex{\llambconv} \setminus \activeaffinespecial$,
 we find $ \sum\nolimits_{j\in \activeaffinex{\llambconv} } \CLcoefj{j}   \cstjx{j}{\llamb}  = \sum\nolimits_{j\in \activeaffinex{\llambconv} \setminus \activeaffinespecial} \CLcoefj{j}   \cstjx{j}{\llamb} < 0   $, a contradiction.
Hence~$\grad \smooth{\llambconv} $
cannot be expressed as a linear combination of elements of~$\{ \grad \cstjx{j}{\llambconv} \}_{j\in\activeaffinespecial}$ and there exists a $\minimumdist>0$  independent of~$\llamb$ such that
\obsolete{
We find that $\grad \smooth{\llambconv} $
cannot be expressed as a linear combination of elements of~$\{ \grad \cstjx{j}{\llambconv} \}_{j\in\activeaffinespecial}$.
Indeed,
we have $\cstjx{j}{\llamb}   = 0  $ for $j \in \activeaffinespecial$ and $   \cstjx{j}{\llamb}  < 0  $ for $j \in \activeaffinex{\llambconv} \setminus \activeaffinespecial$,
 and thus $ \sum\nolimits_{j\in \activeaffinex{\llambconv} } \CLcoefj{j}   \cstjx{j}{\llamb}  = \sum\nolimits_{j\in \activeaffinex{\llambconv} \setminus \activeaffinespecial} \CLcoefj{j}   \cstjx{j}{\llamb} < 0   $.
Since the constraints are affine we can write $ \cstjx{j}{\llamb} = \grad \cstjx{j}{\llambconv}^\trsp (\llamb-\llambconv)  $   for all $\llamb \in \setoflambdas$ and $j\in\activeaffinex{\llambconv}$. 
It follows from~(\ref{KKTsc}) that 
\begin{equation}
 \sum\nolimits_{j\in \activeaffinex{\llambconv} } \CLcoefj{j}   \cstjx{j}{\llamb}  
 = \left[ \sum\nolimits_{j\in \activeaffinex{\llambconv} }  \CLcoefj{j}  \grad \cstjx{j}{\llambconv}^\trsp \right] (\llamb-\llambconv) 
 = - \grad \smooth{\llambconv}^\trsp (\llamb-\llambconv) 
\end{equation}
which is equal to~$0$ and leads to a contradiction if $\grad \smooth{\llambconv}$
is a linear combination of elements of~$\{ \grad \cstjx{j}{\llambconv} \}_{j\in\activeaffinespecial}$.
Hence we can find a $\minimumdist>0$  independent of~$\llamb$ such that
}%
\begin{equation} \label{minimumdistance}
\begin{array}{c}
\Specialnorm{ \grad \smooth{\llambconv} +  \sum\nolimits_{j\in  \activeaffinespecial} \CLcoefterj{j} \grad \cstjx{j}{\llambconv} }{} 
>
 \minimumdist 
, \hspace{5mm} \forall \{ \CLcoefterj{j} \}_{j\in  \activeaffinespecial}.
\end{array}
\end{equation}
For~$\ballradius>0$, consider the function 
$
\stuff{\ballradius}\doteq\max\left(\ballradius,\max\nolimits_{\llamb\in\neibradius{\ballradius}} \norm{ \mapGTXfx{\Scale}{\setoflambdas}{\function}{\llamb}-\llamb}\right)
$ with any bounded scaling strategy~$\Scale$ and step-size policy  in $[\Pamin,1]$ ($\Pamin>0$).
Since $\mapGTXfx{\Scale}{\setoflambdas}{\function}{\llambconv}=\llambconv$, we find  by Lipschitz continuity of~$\grad\function$ and other continuity arguments that $\stuff{\ballradius}\downarrow 0$ whenever $\ballradius\downarrow 0$. It follows that for any $\maxradius>0$, one can find a $\ballradius>0$ such that $\llamb\in\neibradius{\ballradius}$ yields both $\activeaffinex{\llamb} \subset \activeaffinex{\llambconv} $ and $\stuff{\ballradius}<\maxradius$.
By Lipschitz continuity of~$\grad \smoothfunction$, we also have
$\specialnorm{ \grad \smooth{\llamb} - \grad \smooth{\llambconv} }{} \leq \lipschitz \specialnorm{\llamb-\llambconv}{} <\lipschitz\maxradius $  for any $\llamb\in\setoflambdas$, where~$\lipschitz$ denotes the Lipschitz constant.
Suppose now that $\activeaffinex{\llamb} = \activeaffinespecial $ for some $\llamb\in\neibradius{\ballradius}$ and set $\mmu=\mapGTXfx{\Scale}{\genericset}{\function}{\llamb}$ with step size $\Pa\in[\Pamin,1]$.
From~(\ref{gradientdescentoptimalitycondition}) and~(\ref{KKTsc}),
we infer the existence of nonnegative coefficients~$\{ \CLcoefbisj{j} \}_{j\in  \activeaffinespecial}$ satisfying
\begin{equation} \label{geometricinterpretationofprojection}
\begin{array}{c}
\grad \smooth{\llamb} + [
\Pa
 \Scalefx{\function}{\llamb } ]^{-1} ( \mmu - \llamb )
= - \sum\nolimits_{j\in\activeaffinespecial} \CLcoefbisj{j} \grad \cstjx{j}{ \mmu} .
\end{array}
\end{equation}
Then,
\begin{align} \label{minimumdistance}
\begin{array}{c}
 \Specialnorm{ \grad \smooth{\llambconv} +  \sum\nolimits_{j\in  \activeaffinespecial} \CLcoefbisj{j} \grad \cstx{\llambconv} }{} 
\end{array}
&
\begin{array}{c}
\refeq{(\ref{geometricinterpretationofprojection})}{=}
\Specialnorm{  [ \grad \smooth{\llambconv}  - \grad \smooth{\llamb}   ] - [
\Pa
  \Scalefx{\function}{\llamb } ]^{-1} ( \mmu - \llamb )
}{}
\end{array}
\notag
\\
&
\leq
[ \lipschitz +   (\Pamin  \eigentbound )^{-1} ] \maxradius 
,
\notag
\end{align}
 which contradicts~(\ref{minimumdistance}) if, initially, 
$
\maxradius < {\minimumdist }/[{ \lipschitz +  1 /(\Pamin  \eigentbound ) } ]  
$. 
%
Hence $\activeaffinex{\mmu}  \neq \activeaffinespecial $, which proves the first statement considering that the number of constraints~$\affinen$ is finite. The second statement is then immediate.
\QEDA\end{proof}


\label{appendixprooflemma}
\begin{proof}[Proof of Lemma~\ref{lemmalinearconvergence}]
 We already know from Proposition~\ref{propositionefficiency} that the step sizes chosen by~(\ref{stocharmijoconditionglobalone}) are equal to~$1$.
\obsolete{

We first show that~(\ref{stocharmijoconditionglobalone}) holds and the step sizes are all equal to~$1$.
\obsolete{
Using 
$
\smooth{\mmu}
=
\smooth{\llamb}
+
\grad\smooth{\mmu}^\trsp(\mmu-\llamb)
+
\int_{0}^{1} [ 
\grad\smooth{\llamb+\xi(\mmu-\llamb)}
-
\grad\smooth{\mmu}
   ]^\trsp 
(\mmu-\llamb)  \, d\xi
$ twice
\obsolete{
$
\maplocalixy{\GPfunction}{i}{\llamb}{\mmui{i}}
=
\functionx{\llamb}
+
\grad\maplocalixy{\GPfunction}{i}{\llamb}{\mmui{i}}^\trsp(\mmui{i}-\llambi{i})
+
\int_{0}^{1} [ 
\grad\maplocalixy{\GPfunction}{i}{\llamb}{\llambi{i}+\xi(\mmui{i}-\llambi{i})}
-
\grad\maplocalixy{\GPfunction}{i}{\llamb}{\mmui{i}}
   ]^\trsp 
(\mmui{i}-\llambi{i})  \, d\xi
$,
}%
 at step~$k$, one finds,
\begin{align}
\gradi{\Pseqk{k}}\smooth{\llambk{k}}^\trsp(\llambki{k+1}{\Pseqk{k}}-\llambki{k}{\Pseqk{k}})
&\geq \label{Lequation}
\smooth{\llambk{k+1}}-\smooth{\llambk{k}}-\frac{\maxLi{\Pseqk{k}}}{2}\norm{\llambki{k}{\Pseqk{k}}-\llambki{k+1}{\Pseqk{k}}}^2
\\
&\geq  \label{Lequationtwo}
-\maxLi{\Pseqk{k}}\norm{\llambki{k}{\Pseqk{k}}-\llambki{k+1}{\Pseqk{k}}}^2.
\end{align}
}%
If~$\Pa$ denotes the step size used at step~$k$, then
\begin{align}
\smooth{\llambk{k}}-\smooth{\llambk{k+1}}
&
\refeq{(\ref{Lequation})}{\geq}
-\gradi{\Pseqk{k}}\smooth{\llambk{k}}^\trsp(\llambki{k+1}{\Pseqk{k}}-\llambki{k}{\Pseqk{k}})-\frac{\maxLi{i}}{2}\norm{\llambki{k}{\Pseqk{k}}-\llambki{k+1}{\Pseqk{k}}}^2
\\
&
\refeq{(\ref{gradientdescentoptimalitycondition})}{\geq}
(\llambki{k+1}{\Pseqk{k}}-\llambki{k}{\Pseqk{k}})^\trsp{\left[(\Pa\constantScalei{\Pseqk{k}})^{-1}-\frac{\maxLi{\Pseqk{k}}}{2}\identityi{\Pseqk{k}}\right]}
(\llambki{k+1}{\Pseqk{k}}-\llambki{k}{\Pseqk{k}})
\end{align}
which validates the step size selection rule~(\ref{stocharmijoconditionglobalone}) for $\Pa=1$ under the assumptions on~$\Psig$ and~$\constantScale$.
}%
%
%
The rest of the proof---herein provided for completeness and comparison---follows the lines of that of~\cite[Theorem~5]{nesterov12} with the difference that we reason with the norm~$\specialnorm{\cdot}{\smartscale}$. 
We have
\begin{align}
\specialnorm{\llambk{k+1}-\llambconv}{\smartscale}^2
&
=
\specialnorm{\llambk{k}-\llambconv}{\smartscale}^2
+
2(\llambk{k} - \llambconv)^\trsp \smartscale (\llambk{k+1}-\llambk{k})
+
\specialnorm{\llambk{k+1}-\llambk{k}}{\smartscale}^2
\notag
\\
&
=
\specialnorm{\llambk{k}-\llambconv}{\smartscale}^2
+
2(\llambk{k+1} - \llambconv)^\trsp \smartscale (\llambk{k+1}-\llambk{k})
-
\specialnorm{\llambk{k+1}-\llambk{k}}{\smartscale}^2
\notag
\\
&
\refeq{(\ref{gradientdescentoptimalitycondition})}{\leq}
\specialnorm{\llambk{k}-\llambconv}{\smartscale}^2
+
\frac{2}{\nn\probadistri{\Pseqk{k}}}\gradi{\Pseqk{k}}\smooth{\llambk{k}}^\trsp(\llambconvi{\Pseqk{k}}-\llambki{k+1}{\Pseqk{k}})
-
\specialnorm{\llambk{k+1}-\llambk{k}}{\smartscale}^2
\notag
\\
&
\refeq{(\ref{Lequation})}{\leq}
\specialnorm{\llambk{k}-\llambconv}{\smartscale}^2
+
\frac{2
\left[
\gradi{\Pseqk{k}}\smooth{\llambk{k}}^\trsp (\llambconvi{\Pseqk{k}}-\llambki{k}{\Pseqk{k}})
+
\smooth{\llambk{k}}-\smooth{\llambk{k+1}}
\right]
}{\nn\probadistri{\Pseqk{k}}}
\notag
\\
&
\refeq{(\ref{stocharmijoconditionglobalone})}{\leq}
\specialnorm{\llambk{k}-\llambconv}{\smartscale}^2
+
\frac{2
\gradi{\Pseqk{k}}\smooth{\llambk{k}}^\trsp (\llambconvi{\Pseqk{k}}-\llambki{k}{\Pseqk{k}})
}{\nn\probadistri{\Pseqk{k}}}
+
\correctproba{2\left[\smooth{\llambk{k}}-\smooth{\llambk{k+1}}\right]}{
\frac{2\left[\smooth{\llambk{k}}-\smooth{\llambk{k+1}}\right]
}{\nn\probamin}
}
\notag
\end{align}
which yields, by expectation in~$\Pseqk{k}$ and rearrangement of the terms,
\begin{equation}\label{loss}
\expectationcond{
\supernorm{\llambk{k+1}}
}{\llambk{k}}
\refeq{(\ref{supernormfunction})}{\leq}
\supernorm{\llambk{k}}
-
\correctproba{\frac{1}{\nn}}{\probamin}
\grad\smooth{\llambk{k}}^\trsp (\llambk{k}-\llambconv).
\end{equation}
Since $\grad\smooth{\llambk{k}}^\trsp (\llambk{k}-\llambconv)\geq\smooth{\llambk{k}}-\smooth{\llambconv}$ by convexity of~$\smoothfunction$, 
we find by computing  successive conditional expectations,
\begin{eqnarray}
\expectation{
\supernorm{\llambk{k+1}}
}
&
{\leq}
&
\supernorm{\llambk{0}}
-
\correctproba{\frac{1}{\nn}}{\probamin}
\sum\nolimits_{t=0}^{k}
\expectation{\smooth{\llambk{t}}-\smooth{\llambconv}}
\\
&
\refeq{(\ref{stocharmijoconditionglobalone})}{\leq}
&
\supernorm{\llambk{0}}
-
\correctproba{\frac{k+1}{\nn}}{\probamin(k+1)}
\expectation{\smooth{\llambk{k+1}}-\smooth{\llambconv}}
\end{eqnarray}
which shows~(\ref{sublinearrate}).

When Assumption~\ref{assumptionstrongconvexity} holds, we proceed as in~(\ref{Lequation}) and find, 
\begin{equation}
\grad\smooth{\llambk{k}}^\trsp(\llambconv-\llambk{k})
\leq \label{Kequation}
\smooth{\llambconv}-\smooth{\llambk{k}}-\frac{\minK}{2}\specialnorm{\llambk{k}-\llambconv}{\smartscale}^2
\leq  
-\minK\specialnorm{\llambk{k}-\llambconv}{\smartscale}^2
.
\end{equation}
Substituting the two inequalities~(\ref{Kequation}) into~(\ref{loss}) with relative weights $\coefineq=2 \minK(\minK+\correctproba{1}{\nn\probamin})^{-1}\in(0,1]$ and $1-\coefineq$ yields~(\ref{linearrate}).
\QEDA
\end{proof}

\begin{lemma}\label{lemmamatrices}
Let $\LmH=(\LmHij{i}{j})$ be a symmetric  block matrix of $\REAL{\genericn\times\genericn}$ such that $\LmH=\LmD-\LmL-\LmL^\trsp$, where $\LmD=\diag{\LmDi{1},...,\LmDi{\genericn}}$ is block diagonal and~$\LmL$ is strictly lower triangular. If $\LmT=\diag{\LmTi{1},...,\LmTi{\genericn}}$ is a symmetric, positive definite, block diagonal  matrix of $\REAL{\genericn\times\genericn}$ and $\LmGi{i}\doteq\identityi{\genericn}-\diag{0,...,0,\identityi{i},0,...,0}\LmT\LmH$ for $i=1,...,\genericn$, then $\LmGi{\genericn}\LmGi{\genericn-1}\cdots\LmGi{1}=(\LmT^{-1}-\LmL)^{-1}(\LmT^{-1}-\LmD+\LmL^\trsp)$.
\end{lemma}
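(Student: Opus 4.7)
The plan is to unfold the composition $\LmGi{\genericn}\LmGi{\genericn-1}\cdots\LmGi{1}$ step by step as a block Gauss-Seidel sweep, then rearrange the resulting relations into a single block-triangular linear system from which the claimed closed form is read off.

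More concretely, fix $x\in\REAL{\genericn}$, let $x^{(0)}\doteq x$, and define recursively $x^{(i)}\doteq \LmGi{i} x^{(i-1)}$ for $i=1,\dots,\genericn$, so that $y\doteq x^{(\genericn)}=\LmGi{\genericn}\cdots\LmGi{1}x$. From the definition of $\LmGi{i}$, only the $i$-th block of $x^{(i-1)}$ is modified at step $i$, namely $x^{(i)}_i = x^{(i-1)}_i - \LmTi{i}\sum_{k}\LmHij{i}{k}x^{(i-1)}_k$, while $x^{(i)}_j=x^{(i-1)}_j$ for $j\neq i$. By induction, this implies $x^{(i-1)}_k = y_k$ for $k<i$ (blocks already updated) and $x^{(i-1)}_k = x_k$ for $k\geq i$ (blocks not yet touched).

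Substituting this identification into the update for block $i$ and multiplying by $\LmTi{i}^{-1}$ yields, for every $i$,
\begin{equation*}
\LmTi{i}^{-1} y_i + \sum_{k<i}\LmHij{i}{k} y_k \;=\; \LmTi{i}^{-1} x_i - \sum_{k\geq i}\LmHij{i}{k} x_k.
\end{equation*}
Writing $\LmH=\LmD-\LmL-\LmL^\trsp$ with $\LmL$ strictly lower triangular, the strictly lower block-triangular part of $\LmH$ equals $-\LmL$ and its block-diagonal-plus-strictly-upper part equals $\LmD-\LmL^\trsp$. Stacking the $\genericn$ relations thus gives the block-matrix identity $(\LmT^{-1}-\LmL)y = (\LmT^{-1}-\LmD+\LmL^\trsp)x$. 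Since $\LmT^{-1}-\LmL$ is block lower triangular with invertible diagonal blocks $\LmTi{i}^{-1}$, it is invertible, and the asserted formula follows.

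The only delicate point is the bookkeeping step that correctly identifies $x^{(i-1)}_k$ with either $x_k$ or $y_k$ depending on whether $k<i$ or $k\geq i$; everything else is routine block-matrix algebra. No use of positive definiteness of $\LmH$ or $\LmT$ is required beyond the triangular-block invertibility of $\LmT^{-1}-\LmL$, which follows from $\LmT$ being positive definite.
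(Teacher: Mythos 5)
Your proof is correct, and it takes a genuinely different route from the one in the paper. The paper proceeds by induction on the block index, partitioning $\LmT$, $\LmD$, $\LmL$ and the identity into a $3\times3$ block structure around position $i$, computing $\LmGi{i}\LmMi{i-1}$ explicitly, and invoking a block-matrix inversion formula for $\LmTms^{-1}-\LmLms$ to close the induction. You instead read the product $\LmGi{\genericn}\cdots\LmGi{1}$ as a single Gauss--Seidel sweep acting on a vector $x$: since step $i$ only rewrites block $i$, the blocks entering the $i$-th update are the already-final blocks $y_k$ for $k<i$ and the untouched blocks $x_k$ for $k\geq i$, and splitting $\LmH=\LmD-\LmL-\LmL^\trsp$ turns the stacked updates into $(\LmT^{-1}-\LmL)y=(\LmT^{-1}-\LmD+\LmL^\trsp)x$, whence the claim by invertibility of the block lower-triangular matrix $\LmT^{-1}-\LmL$. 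Your bookkeeping step (identifying $x^{(i-1)}_k$ with $y_k$ or $x_k$ according to $k<i$ or $k\geq i$) is exactly right and is the only nontrivial observation needed; the argument is shorter, avoids the explicit inversion formula, and makes the Gauss--Seidel interpretation of the product transparent, whereas the paper's computation yields the partial products $\LmMi{i}$ in closed form along the way. You are also right that positive definiteness of $\LmT$ is used only to guarantee the invertibility of the diagonal blocks $\LmTi{i}$.
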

\begin{proof}
Since the result is trivial for $\genericn=1$ we suppose that $\genericn\geq2$. We proceed by induction on~$\genericn$.
Let $\LmMi{1}\doteq\LmGi{1}$. For $2\leq i\leq\genericn$, define $\LmMi{i}\doteq\LmGi{i}\LmGi{i-1}\cdots\LmGi{1}$ and decompose~$\LmT$, $\LmD$, $\LmL$ and the $\genericn\times\genericn$ identity matrix~$\identityi{\genericn}$ into
\begin{equation}
\LmT
=
\makeblockmatrixnine{\LmTmsi{i}}{0}{0}{0}{\LmTi{i}}{0}{0}{0}{\LmTpsi{i}}, 
\ 
\LmD
=
\makeblockmatrixnine{\LmDmsi{i}}{0}{0}{0}{\LmDi{i}}{0}{0}{0}{\LmDpsi{i}}, 
\ 
\LmL
=
\makeblockmatrixnine{\LmLmsi{i}}{0}{0}{\Lmli{i}}{0}{0}{\Lmlpsi{i}}{\Lmlambi{i}}{\LmLpsi{i}}, 
\ 
\identityi{\genericn}
=
\makeblockmatrixnine{\identitymsi{i}}{0}{0}{0}{\identityi{i}}{0}{0}{0}{\identitypsi{i}}.
\end{equation}
We can write
\begin{equation}
\LmGi{i}
=
\makeblockmatrixnine{\identitymsi{i}}{0}{0}{\LmTi{i}\Lmli{i}}{\identityi{i}-\LmTi{i}\LmDi{i}}{\LmTi{i}\Lmlambi{i}^\trsp}{0}{0}{\identitypsi{i}}, 
\hspace{5mm} 2\leq i \leq\genericn.
\label{Gi}
\end{equation}
For some~$i\geq2$, notice that~$\LmTmsi{i}$ is nonsingular, as well as~$\LmTmsi{i}^{-1}-\LmLmsi{i}$, and  suppose  that
\begin{align}
\LmMi{i-1}
&=
\makeblockmatrixfour{(\LmTmsi{i}^{-1}-\LmLmsi{i})^{-1}(\LmTmsi{i}^{-1}-\LmDmsi{i}+\LmLmsi{i}^\trsp)}{(\LmTmsi{i}^{-1}-\LmLmsi{i})^{-1}\vect{\Lmli{i},\Lmlpsi{i}}^\trsp}{0}{\identitypsi{i-1}}
\label{Miminusoneinit}
\\
&=
\makeblockmatrixnine{\LmZmsi{i}(\LmTmsi{i}^{-1}-\LmDmsi{i}+\LmLmsi{i}^\trsp)}{\LmZmsi{i}\Lmli{i}^\trsp}{\LmZmsi{i}\Lmlpsi{i}^\trsp}{0}{\identityi{i}}{0}{0}{0}{\identitypsi{i}},
\label{Miminusone}
\end{align}
where  $\LmZmsi{i}\doteq(\LmTmsi{i}^{-1}-\LmLmsi{i})^{-1}$.
By block matrix inversion of~$\LmTmsi{i+1}^{-1}-\LmLmsi{i+1}$, we have
\begin{equation}\label{blockinversion}
\LmTmsi{i+1}^{-1}-\LmLmsi{i+1}
=
\makeblockmatrixfour{\LmZmsi{i}}{0}{\LmTi{i}\Lmli{i} \LmZmsi{i}}{\LmTi{i}}
,\
\LmTmsi{i+1}^{-1}-\LmDmsi{i+1}+\LmLmsi{i+1}^\trsp
=
\makeblockmatrixfour{\LmTmsi{i}^{-1}-\LmDmsi{i}+\LmLmsi{i}^\trsp}{\Lmli{i}^\trsp}{0}{\LmTi{i}^{-1}-\LmDi{i}}
.
\end{equation}
It follows from~(\ref{Gi}),~(\ref{Miminusone}) and 
$\LmMi{i}=\LmGi{i}\LmMi{i-1}
$
that
\begin{align}
\LmMi{i}
&=
\makeblockmatrixnine{\LmZmsi{i}(\LmTmsi{i}^{-1}-\LmDmsi{i}+\LmLmsi{i}^\trsp)}{\LmZmsi{i}\Lmli{i}^\trsp}{\LmZmsi{i}\Lmlpsi{i}^\trsp}
{\LmTi{i}\Lmli{i}    \LmZmsi{i}(\LmTmsi{i}^{-1}-\LmDmsi{i}+\LmLmsi{i}^\trsp)}
{\LmTi{i}\Lmli{i}  \LmZmsi{i}\Lmli{i}^\trsp +\identityi{i} -\LmTi{i}\LmDi{i}}{\LmTi{i}\Lmli{i} \LmZmsi{i}\Lmlpsi{i}^\trsp + \LmTi{i}\Lmlambi{i}^\trsp }{0}{0}{\identitypsi{i}},
\\
&\refeq{(\ref{blockinversion})}{=}
\makeblockmatrixfour{(\LmTmsi{i+1}^{-1}-\LmLmsi{i+1})^{-1}(\LmTmsi{i+1}^{-1}-\LmDmsi{i+1}+\LmLmsi{i+1}^\trsp)}{(\LmTmsi{i+1}^{-1}-\LmLmsi{i+1})^{-1}\vect{\Lmli{i+1},\Lmlpsi{i+1}}^\trsp}{0}{\identitypsi{i}},
\end{align}
where we have used $\vect{\Lmli{i+1},\Lmlpsi{i+1}}\equiv({\Lmlpsi{i}}  \ {\Lmlambi{i}})$.
Since~(\ref{Miminusoneinit}) holds for $i=1$, we find by induction
$
\LmGi{\genericn}\cdots\LmGi{1}
=
\LmMi{\genericn}
=
(\LmT^{-1}-\LmL)^{-1}(\LmT^{-1}-\LmD+\LmL^\trsp)$.
\end{proof}




\jotaIEEE{}{
\ifCLASSOPTIONcaptionsoff
  \newpage
\fi
}



\bibliographystyle{IEEEtran}
 \bibliography{IEEEabrv,\setpath%
bibli_ob_TUB}
%

%

\obsolete{
\begin{IEEEbiography}{Michael Shell}
Biography text here.
\end{IEEEbiography}

\begin{IEEEbiographynophoto}{John Doe}
Biography text here.
\end{IEEEbiographynophoto}


\begin{IEEEbiographynophoto}{Jane Doe}
Biography text here.
\end{IEEEbiographynophoto}




}

\end{document}